\newcommand{\noun}[1]{\textsc{#1}}
\providecommand{\tabularnewline}{\\}
\numberwithin{equation}{section}
\numberwithin{figure}{section}
\theoremstyle{plain}
\newtheorem{thm}{\protect\theoremname}
\theoremstyle{plain}
\newtheorem{prop}[thm]{\protect\propositionname}
\theoremstyle{remark}
\newtheorem{rem}[thm]{\protect\remarkname}
\theoremstyle{plain}
\newtheorem{lem}[thm]{\protect\lemmaname}
\theoremstyle{plain}
\newtheorem{cor}[thm]{\protect\corollaryname}
\providecommand{\corollaryname}{Corollary}
\providecommand{\lemmaname}{Lemma}
\providecommand{\propositionname}{Proposition}
\providecommand{\remarkname}{Remark}
\providecommand{\theoremname}{Theorem}
\begin{document}
\title{\noun{A minimal and non-alternative realisation of the Cayley plane}}
\author{Daniele Corradetti$^{*}$, Alessio Marrani $^{\dagger}$, Francesco
Zucconi $^{\ddagger}$}
\begin{abstract}
The compact 16-dimensional Moufang plane, also known as the Cayley
plane, has traditionally been defined through the lens of octonionic
geometry. In this study, we present a novel approach, demonstrating
that the Cayley plane can be defined in an equally clean, straightforward
and more economic way using two different division and composition
algebras: the paraoctonions and the Okubo algebra. The result is quite
surprising since paraoctonions and Okubo algebra possess a weaker
algebraic structure than the octonions, since they are non-alternative
and do not uphold the Moufang identities. Intriguingly, the real Okubo
algebra has $\text{SU}\left(3\right)$ as automorphism group, which
is a classical Lie group, while octonions and paraoctonions have an
exceptional Lie group of type $\text{G}_{2}$. This is remarkable,
given that the projective plane defined over the real Okubo algebra
is nevertheless isomorphic and isometric to the octonionic projective
plane which is at the very heart of the geometric realisations of
all types of exceptional Lie groups. Despite its historical ties with
octonionic geometry, our research underscores the real Okubo algebra
as the weakest algebraic structure allowing the definition of the
compact 16-dimensional Moufang plane. 
\end{abstract}

\maketitle
\tableofcontents{}

\section*{\noun{Introduction}}

A Moufang plane is a projective plane where every line is a translation
line or, alternatively, where the ``little Desargues theorem'' holds
(see in Sec. \ref{sec:Discussions-and-verifications}). Among the
various characteristics of Moufang planes, a notable one is their
dimensionality. Specifically, it is well-known that all compact, connected
Moufang planes are of dimension 2, 4, 8 and 16 and isomorphic to precisely
the projective planes over the Hurwitz division algebras $\mathbb{R},\mathbb{C},\mathbb{H}$
and $\mathbb{O}$. Of all these planes, the 16-dimensional Moufang
plane stands out due to the historical obstacles in its definition
arising from the lack of associativity of the octonions $\mathbb{O}$.
This definitional challenge sparked significant interest in mathematical
research during the early 20th century, culminating in one of the
most fascinating interplays between projective geometry, algebra,
and differential geometry. Indeed, one of the most remarkable achievements
of the resulting mathematical research activity, mainly due to Cartan
\cite{Car14}, Jordan, Wigner and von Neumann \cite{Jordan} and Freudenthal
\cite{Fr54}, is an interesting three-fold description of these planes:
as a completion of the affine plane $\mathscr{A}^{2}\left(\mathbb{K}\right)$,
for every $\mathbb{K}\in\left\{ \mathbb{R},\mathbb{C},\mathbb{H},\mathbb{O}\right\} $;
as the rank-1 idempotent elements of the rank-three Jordan algebra
$\mathfrak{J}_{3}\left(\mathbb{K}\right)$; as a coset manifold with
a specific isometry and isotropy group. Furthermore, the investigation
of octonionic geometry, particularly the study of the octonionic projective
plane $\mathbb{O}P^{2}$, unraveled a deep connection between octonions
and exceptional Lie groups \cite{Fr54,Freud 1965,Tits,Vinberg,Rosenf98}.
This connection, which was first envisaged by Cartan\cite{Car15}
and then explored by Chevalley and Schafer \cite{ChSch}, is so deep
that every known realization of compact exceptional Lie groups somehow
involves the octonions $\mathbb{O}$ in one form or another \cite{Yokota}.
Notably, each of these realizations of exceptional Lie groups has
a geometrical aspect in which the 16-dimensional Moufang plane plays
a pivotal role. Indeed, following Freudenthal \cite{Fr54,Freud 1965},
one can obtain all exceptional Lie groups of type $\text{F}_{4},\text{E}_{6},\text{E}_{7}$
and $\text{E}_{8}$ as transformation groups of the 16-dimensional
Moufang plane preserving the features of elliptic geometry, projective
geometry, symplectic geometry and metasymplectic geometry respectively
\cite{LM01}. 

Historically, the compact 16-dimensional Moufang plane's definition
arose out of octonionic geometry. However, in this work we show that
this plane can be defined in an equally clean, straightforward and
more minimal way by means of two different division composition algebras
endowed with less algebraic structure than the octonions, and that
do not uphold the Moufang identities, historically associated with
the Moufang property of the plane. 

Clearly, in order to define a 16-dimensional plane that satisfies
the affine and projective axioms of incidence geometry, an 8-dimensional
division algebra is necessary. Hurwitz theorem \cite{Hurwitz98} states
the existence of only one 8-dimensional division composition algebra
with a unit element, i.e. the algebra of octonions $\mathbb{O}$.
Yet, when non-unital algebras are considered, three 8-dimensional
division composition algebras emerge \cite{ElDuque Comp}: the aforementioned
octonions $\mathbb{O}$, the para-octonions $p\mathbb{O}$ (not to
be confused with the split-octonions that are not a division algebra)
and the real Okubo algebra $\mathcal{O}$. 

All three 8-dimensional algebras, being division and composition,
allow independent and self-contained definitions of an affine and
projective plane over them. Quite unexpectedly, despite the three
different algebraic origins, the three definitions give rise to the
same incidence plane: the compact 16-dimensional Moufang plane. This
result is quite surprising because the three algebras, though deeply
related, display very different properties. For instance, while octonions
have a unit element and paraoctonions have a paraunit, the Okubo algebra
merely contains idempotent elements. These differences apparently
show up into the projective planes defined over these algebras: e.g.,
as a consequence of not having an identity element, the points on
the Okubic plane $\left(0,0\right),\left(x,x\right)$ and $\left(y,y\right)$
are not all three incident to the same Okubic line, nor it exists
an Okubic collineation that switches coordinates, i.e. $\left(x,y\right)\longrightarrow\left(y,x\right)$,
as one has in octonionic case. Despite these apparent differences,
in Sec. \ref{sec:Three-realizations-of} we show that the projective
planes, obtained directly from their corresponding foundational algebras,
are all isomorphic and even all isometric one another. 
\begin{table}
\centering{}%
\begin{tabular}{|c|c|c|c|}
\hline 
Property & $\mathbb{O}$ & $p\mathbb{O}$ & $\mathcal{O}$\tabularnewline
\hline 
\hline 
Unital & Yes & No & No\tabularnewline
\hline 
Paraunital & Yes & Yes & No\tabularnewline
\hline 
Alternative & Yes & No & No\tabularnewline
\hline 
Flexible & Yes & Yes & Yes\tabularnewline
\hline 
Composition & Yes & Yes & Yes\tabularnewline
\hline 
Automorphism & $\text{G}_{2}$ & $\text{G}_{2}$ & $\text{SU}\left(3\right)$\tabularnewline
\hline 
\end{tabular}\caption{\label{tab:Synoptic-table-of}Synoptic table of the algebraic properties
of octonions $\mathbb{O}$, paraoctonions $p\mathbb{O}$ and the real
Okubic algebra $\mathcal{O}$.}
\end{table}

The result is remarkable in itself. However, since the 16-dimensional
Moufang plane is so deeply related with exceptional Jordan algebras,
exceptional Lie Groups and symmetric spaces, it also paves the way
for a novel, more minimal algebraic realization of these ubiquitous
mathematical objects. A synoptic summary of the algebraic properties
of octonions $\mathbb{O}$, paraoctonions $p\mathbb{O}$ and of the
real Okubic algebra $\mathcal{O}$ is summarized in Table \ref{tab:Synoptic-table-of}.
It is worth noting that the minimal algebraic structure between such
three algebras is the Okubo algebra $\mathcal{O}$, which is neither
unital, nor para-unital; it is non-alternative and has the smallest
automorphism group, i.e. $\text{SU}\left(3\right)$ which has dimension
8 compared to $G_{2}$ that is a 14-dimensional group. Both paraoctonions
$p\mathbb{O}$ and the real Okubic algebra $\mathcal{O}$ are non-alternative,
but flexible algebras. Their relation to the Moufang plane is thus
intriguing, because, notoriously, Moufang planes are associated to
Moufang identities, that in turn imply the alternativity of the underlying
algebra \cite{Mou35,HP}. In fact, all this does not give arise to
any contradiction, since both the Okubic and paraoctonionic projective
planes can be coordinatised by an alternative algebra, i.e. the octonions,
through a non-linear  planar ternary field as we show in Sec. \ref{sec:Discussions-and-verifications}. 

An even more striking observation is that, while the octonions possess
an automorphism group that is an exceptional group, the automorphism
Lie group of the real Okubo algebrais not exceptional, nor has any
immediate relation to $\text{G}_{2}$ itself. Nevertheless, the projective
plane over the Okubo algebra gives rise to a geometric realisation
of all types of exceptional Lie groups as $\text{G}_{2},\text{F}_{4},\text{E}_{6},\text{E}_{7}$
and $\text{E}_{8}$ as the transformation group respectively preserving:
the non-degenerate quadrangles of the plane (type $\text{G}_{2}$);
the distances of the plane (type $\text{F}_{4}$); the usual incidence
relations between line and points (type $\text{E}_{6}$); extended
incidence relations according to symplectic and metasymplectic geometry
(type $\text{E}_{7}$ and $\text{E}_{8}$, for this last part see
Freudenthal work \cite[Sec. 4.13]{Freud 1965}). It is well known
that all compact exceptional Lie groups have $\text{SU}\left(3\right)$
as a subgroup, this work points out how the presence of a subgroup
$\text{SU}\left(3\right)$ is related with an Okubic structure underlying
the 16-dimensional Moufang plane.

It is here worth recalling (see e.g. \cite{Ste80,Ste08}) that Lie
groups of type $\text{E}_{6}$ are largely studied and are still viable
candidates for GUT theories and that the real Okubo algebra was discovered
by Susumo Okubo in his investigations on $\text{SU}\left(3\right)$
as the gauge group for QCD \cite{Okubo95}. Thus, we expect the Okubic
formulation of the Cayley plane to find a physical application as
a concrete alternative to its octonionic realisation and to the octonionic
formulation of the rank-3 exceptional Jordan algebra, also known as
Albert algebra. Additionally, it is known that M-theory may display
an hidded Cayley-Moufang fibration \cite{Sa11}. Here it is worth
noting that variations in the foundational algebra of this plane could
potentially lead to novel physical theories.

The present work is thus structured as follows. In Sec. \ref{sec:Octonions,-para-octonions-and}
we review the three algebras we are going to use: octonions $\mathbb{O}$,
paraoctonions $p\mathbb{O}$ and the real Okubic algebra $\mathcal{O}$.
In Sec. \ref{sec:Affine-and-projective} we define the three affine
and projective planes. Since the construction is formally very similar
we develop only the details of the construction of Okubic affine and
projective plane, pointing out the differences occurring in the other
algebras. The main result is in Sec. \ref{sec:Three-realizations-of}
where we present the isomorphism between the three planes. Finally,
in Sec. \ref{sec:Discussions-and-verifications} we discuss our findings
and introduce a software tool that facilitates direct and numerical
verification of calculations involving octonionic, para-octonionic,
and Okubic computations. This tool has been made publicly available
and can be accessed on our GitHub repository at \texttt{https://github.com/DCorradetti/OkuboAlgebras}.

\section{\label{sec:Octonions,-para-octonions-and}Composition algebras }

Composition algebras are algebras endowed with a norm that enjoys
the multiplicative property, i.e. $n\left(x\cdot y\right)=n\left(x\right)n\left(y\right)$.
Composition algebras with multiplicative identity are called Hurwitz
algebras and are fully classified \cite{ElDuque Comp}. On the other
hand, composition algebras without multiplicative identity but with
associative norm were discovered by Petersson \cite{Petersson 1969}
and indipendently by Okubo \cite{Okubo 1978}; they are now called
symmetric composition algebras \cite{KMRT} and are completely classified
in para-Hurwitz and Okubo algebras \cite{ElDuque Comp}. Para-Hurwitz
algebras are non-unital composition algebras strictly related to their
unital companion, i.e. the corresponding Hurwitz algebra, while on
the other hand Okubo algebras are somewhat more unique in feature
appearing only as 8-dimensional algebras and with some very peculiar
characteristics that distinguish them from both Huwitz and para-Hurwitz
algebras. It is worth noting that while it is possible to define an
Okubo algebra over any field, here we will be focusing on the Okubo
algebra over the real $\mathbb{R}$, which is a division composition
algebra. 

In this section we review some useful notions about composition algebras.
Then we focus on Hurwitz algebras and, subsequently, we enter into
the realm of symmetric composition algebras, specifically highlighting
para-Hurwitz and Petersson algebras that in fact exhaust all algebras
of this family. Even if this section is made of known results, we
thought it might be worthwhile to collect them in a few pages of review
content given their paramount importance in the understanding of the
algebraic context of the subsequent sections.

\subsection{Composition Algebras}

An\emph{ algebra}, denoted by $A$, is a vector space over a field
$\mathbb{F}$ equipped with a bilinear multiplication. For our discussion,
we will restrict our attention to algebras of finite dimension and
the field $\mathbb{F}$ will be taken to be either the field of real
$\mathbb{R}$ or complex numbers $\mathbb{C}$. The specific properties
of the multiplication operation in an algebra lead to various classifications.
Specifically, an algebra $A$ is said to be\emph{ commutative} if
$x\cdot y=y\cdot x$ for every $x,y\in A$; is \emph{associative}
if satisfies $x\cdot\left(y\cdot z\right)=\left(x\cdot y\right)\cdot z$;
is \emph{alternative} if $x\cdot\left(y\cdot y\right)=\left(x\cdot y\right)\cdot y$;
and finally, \emph{flexible} if $x\cdot\left(y\cdot x\right)=\left(x\cdot y\right)\cdot x$.
It is worth noting that the last three proprieties can be seen as
successive refinements of associativity, i.e.
\begin{equation}
\text{associative}\Rightarrow\text{alternative}\Rightarrow\text{flexible}.
\end{equation}
This observation stems from a nontrivial theorem proved by Artin (see
\cite{Scha}) who showed that all alternative algebras are flexible. 

Since $A$ must be a group with respect to addition, every algebra
has a zero element $0\in A$. Furthermore, if the algebra does not
have zero divisors, it is referred to as a \emph{division} algebra,
i.e. an algebra for which $x\cdot y=0$ implies $x=0$ or $y=0$.
While the zero element is a universal feature in any algebra, the
algebra is termed \emph{unital} if there exists an element $1\in X$
such that $1\cdot x=x\cdot1=x$ for all $x\in A$. 

Consider an algebra $A$. Then a quadratic form $n$ on $A$ over
the field $\mathbb{F}$ is called \emph{norm} and its polarization
is given by 
\begin{equation}
\left\langle x,y\right\rangle =n\left(x+y\right)-n\left(x\right)-n\left(y\right),\label{eq:polarNorm}
\end{equation}
so that the norm can be explicitly given as
\begin{equation}
n\left(x\right)=\frac{1}{2}\left\langle x,x\right\rangle ,\label{eq:n(x)=00003D1/2(x,x)}
\end{equation}
for every $x\in A$. An algebra $A$ with a non-degenerate norm $n$
that satisfies the following multiplicative property, i.e.

\begin{align}
n\left(x\cdot y\right) & =n\left(x\right)n\left(y\right),\label{eq:comp(Def)}
\end{align}
for every $x,y\in A$, is called a \emph{composition} algebra and
is denoted with the triple $\left(A,\cdot,n\right)$ or simply as
$A$ if there are no reason for ambiguity.

Given a composition algebra $A$, applying equation (\ref{eq:polarNorm})
to the multiplicative property of the norm expressed in (\ref{eq:comp(Def)}),
we find that

\begin{align}
\left\langle x\cdot y,x\cdot z\right\rangle  & =n\left(x\right)\left\langle y,z\right\rangle ,
\end{align}
for every $x,y,z\in A$, which is an useful identity to be aware of. 

\subsection{Unital composition algebras}

Composition algebras that possess an unit element are called \emph{Hurwitz
algebras}. The interplay between the multiplicative property of the
norm in (\ref{eq:comp(Def)}) and the existence of a unit element,
is full of interesting implications. Indeed, every Hurwitz algebra
is endowed with an order-two antiautomorphism called \emph{conjugation},
defined by 
\begin{equation}
\overline{x}=\left\langle x,1\right\rangle 1-x.\label{eq:conjugation}
\end{equation}
 The linearization of the norm, when paired with the composition,
results in the notable relation $\left\langle x\cdot y,z\right\rangle =\left\langle y,\overline{x}\cdot z\right\rangle ,$
that imply that $\overline{x\cdot y}=\overline{y}\cdot\overline{x}$
and 
\begin{equation}
x\cdot\overline{x}=n\left(x\right)1.\label{eq:ConjugNorm}
\end{equation}
Moreover, from the existence of a unit element in a composition algebra
we have that elements with unit norm form a goup and, even more strikingly,
that the whole algebra must be alternative (for a proof see \cite[Prop. 2.2]{ElDuque Comp}). 

Equation (\ref{eq:ConjugNorm}) can be rephrased in the well-known
\emph{Hamilton-Cayley equation,} $x^{2}-\left\langle x,1\right\rangle x-n\left(x\right)1=0,$
which holds true for every unital composition algebra. Finally, a
relation that is crucial for the Veronesean representation of the
projective plane over a unital composition algebras, is the following
\begin{equation}
x\cdot\left(\overline{x}\cdot y\right)=\left(x\cdot\overline{x}\right)\cdot y=n\left(x\right)y,\label{eq:compAlg x.x=0000BA.y=00003Dn(x)y}
\end{equation}
which has a nice analogous in the case of \emph{symmetric composition}
algebras that we discuss in Section \ref{sec:Symmetric-composition-algebras}. 

A major theorem by Hurwitz proves that the only unital composition
algebras over the reals are $\mathbb{R},\mathbb{C},\mathbb{H}$ and
$\mathbb{O}$ accompanied by their split counterparts $\mathbb{C}_{s},\mathbb{H}_{s},\mathbb{O}_{s}$
(see \cite[Cor. 2.12]{Hurwitz98,ElDuque Comp}). Consequently, there
are seven Hurwitz algebras, each having real dimensions of 1, 2, 4,
or 8. Out of these, four are also division algebras, i.e. $\mathbb{R},\mathbb{C},\mathbb{H}$
and $\mathbb{O}$, while three are split algebras and thus have zero
divisors, i.e. $\mathbb{C}_{s},\mathbb{H}_{s},\mathbb{O}_{s}$. The
properties of such algebras are quite different one another. More
specifically, $\mathbb{R}$ is also totally ordered, commutative and
associative; $\mathbb{C}$ is just commutative and associative; $\mathbb{H}$
is only associative and, finally, $\mathbb{O}$ is only alternative.
\begin{table}
\begin{centering}
\begin{tabular}{|c|c|c|c|c|c|c|c|c|c|c|c|c|}
\hline 
\textbf{Hurwitz} & \textbf{O.} & \textbf{C.} & \textbf{A.} & \textbf{Alt.} & \textbf{F.} &  & \textbf{p-Hurwitz} & \textbf{O.} & \textbf{C.} & \textbf{A.} & \textbf{Alt.} & \textbf{F.}\tabularnewline
\hline 
\hline 
$\mathbb{R}$ & Yes & Yes & Yes & Yes & Yes &  & $p\mathbb{R}\cong\mathbb{R}$ & Yes & Yes & Yes & Yes & Yes\tabularnewline
\hline 
$\mathbb{C}$, $\mathbb{C}_{s}$ & No & Yes & Yes & Yes & Yes &  & $p\mathbb{C}$, $p\mathbb{C}_{s}$ & No & Yes & No & No & Yes\tabularnewline
\hline 
$\mathbb{H}$,$\mathbb{H}_{s}$ & No & No & Yes & Yes & Yes &  & $p\mathbb{H}$,$p\mathbb{H}_{s}$ & No & No & No & No & Yes\tabularnewline
\hline 
$\mathbb{O}$,$\mathbb{O}_{s}$ & No & No & No & Yes & Yes &  & $p\mathbb{O}$,$p\mathbb{O}_{s}$ & No & No & No & No & Yes\tabularnewline
\hline 
\end{tabular}{\small{} }{\small\par}
\par\end{centering}
\begin{centering}
{\small{}\bigskip{}
}\caption{\emph{\label{tab:Hurwitz-para-Hurwitz}On the left,} we have summarized
the algebraic properties, i.e. totally ordered (O), commutative (C),
associative (A), alternative (Alt), flexible (F), of all Hurwitz algebras,
namely $\mathbb{R},\mathbb{C},\mathbb{H}$ and $\mathbb{O}$ along
with their split counterparts $\mathbb{C}_{s},\mathbb{H}_{s},\mathbb{O}_{s}$.
\emph{On the right}, we have summarized the algebraic properties of
all para-Hurwitz algebras, namely $p\mathbb{R},p\mathbb{C},p\mathbb{H}$
and $p\mathbb{O}$ accompanied by their split counterparts $p\mathbb{C}_{s},p\mathbb{H}_{s},p\mathbb{O}_{s}$.}
\par\end{centering}
\end{table}
 As shown by Table \ref{tab:Hurwitz-para-Hurwitz} all properties
of $\mathbb{R},\mathbb{C},\mathbb{H}$ and $\mathbb{O}$ are valid
also for the split companions with the only difference that the latter
are not division algebras and do have zero divisors. Generalizations
of Hurwitz Theorem can be done over arbitrary fields (see \cite[p. 32]{ZSSS})
but for our purposes this will not be needed.

\subsection{\label{sec:Symmetric-composition-algebras}Symmetric composition
algebras}

We now turn our attention to a special class of composition algebras,
i.e. symmetric composition algebras, that are not unital but exhibit
many properties analogous of Hurwitz algebras. Compositions algebras
with associative norms (see below) were independently studied by Petersson
\cite{Petersson 1969}, Okubo \cite{Okubo95}, and Faulkner \cite{Fau14}.
In \cite{OO81a}, Okubo-Osborn shown that over an algebraically closed
field the only two types of symmetric composition algebras are para-Hurwitz
algebras and Okubo algebras, but a final classification was done by
Elduque and Myung \cite{Elduque 91,Elduque Myung 90}.

A symmetric composition algebra $\left(A,*,n\right)$ is a composition
algebra wherein the norm is associative, i.e. satisfies the identity
\begin{equation}
\left\langle x*y,z\right\rangle =\left\langle x,y*z\right\rangle ,\label{eq:associativityNorm}
\end{equation}
where $x,y,z\in A$ and $\left\langle x,y\right\rangle =n\left(x+y\right)-n\left(x\right)-n\left(y\right)$,
as stated in (\ref{eq:polarNorm}). 

From equation (\ref{eq:associativityNorm}), we extract a significant
attribute of symmetric composition algebras. More precisely, considering:
\begin{align}
\left\langle \left(x*y\right)*x,z\right\rangle  & =\left\langle x*y,x*z\right\rangle =n\left(x\right)\left\langle y,z\right\rangle ,
\end{align}
and given that $n\left(x+y\right)=n\left(x\right)+n\left(y\right)+\left\langle x,y\right\rangle $,
we can deduce
\begin{align}
n\left(\left(x*y\right)*x-n\left(x\right)y\right) & =2n^{2}\left(x\right)n\left(y\right)-n\left(x\right)\left\langle x*y,x*y\right\rangle =0.
\end{align}
Thus, since the norm $n$ is non singular we have the following important
proposition
\begin{prop}
\label{prop:x*y*x}Let $\left(A,*,n\right)$ be symmetric composition
algebra then 
\begin{equation}
\left(x*y\right)*x=n\left(x\right)*y,\label{eq:x*y*x=00003Dn(x)y}
\end{equation}
for every $x,y\in A$.
\end{prop}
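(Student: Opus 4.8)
The plan is to prove \eqref{eq:x*y*x=00003Dn(x)y} not by manipulating products directly but by comparing inner products: I would show that $(x*y)*x$ and $n(x)y$ have the same pairing $\langle\,\cdot\,,z\rangle$ against every $z\in A$, and then read off the equality from the non-degeneracy of $n$. This converts the identity, which is genuinely nonlinear in $x$, into a combination of two facts that are already linear in the test slot, namely the associativity of the norm \eqref{eq:associativityNorm} and the polarized form of multiplicativity $\langle x*y,x*z\rangle=n(x)\langle y,z\rangle$ obtained by polarizing \eqref{eq:comp(Def)}.

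Concretely, fixing $x,y\in A$ and an arbitrary $z\in A$, the first step is to apply \eqref{eq:associativityNorm} with $x*y$ in the first slot and $x$ in the second, which moves one factor of $x$ across the pairing and yields $\langle (x*y)*x,z\rangle=\langle x*y,x*z\rangle$. The second step recognises the right member as an instance of the polarized composition identity, so it equals $n(x)\langle y,z\rangle$; and since $n(x)$ is a scalar this is precisely $\langle n(x)y,z\rangle$. Subtracting, $\langle (x*y)*x-n(x)y,\,z\rangle=0$ for every $z\in A$, and the non-degeneracy of the polarization $\langle\,\cdot\,,\cdot\,\rangle$ forces the bracketed element to vanish, which is \eqref{eq:x*y*x=00003Dn(x)y}. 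As $x,y$ were arbitrary, this completes the argument.

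I do not expect a serious obstacle; the only points requiring care are the bookkeeping of the substitutions in \eqref{eq:associativityNorm} and the decision to argue through the bilinear form rather than through $n$ itself. This last choice is the substantive one. The computation displayed just before the statement instead verifies that $n\bigl((x*y)*x-n(x)y\bigr)=0$, but deducing $w=0$ from $n(w)=0$ is legitimate only when the norm is anisotropic, i.e.\ in the division case. Passing to the orthogonality statement $\langle w,z\rangle=0$ for all $z$ removes this restriction and makes the argument valid for every symmetric composition algebra, split ones included.
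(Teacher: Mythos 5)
Your proof is correct, and its first half coincides with the paper's: both rest on the chain $\left\langle \left(x*y\right)*x,z\right\rangle =\left\langle x*y,x*z\right\rangle =n\left(x\right)\left\langle y,z\right\rangle$, obtained from the associativity of the norm (\ref{eq:associativityNorm}) together with the polarization of the composition property (\ref{eq:comp(Def)}). The divergence is in the concluding step, and it is substantive. The paper proceeds by computing $n\left(\left(x*y\right)*x-n\left(x\right)y\right)=0$ and then infers that the argument vanishes ``since the norm $n$ is non singular''; but non-degeneracy of a quadratic form does not exclude nonzero isotropic vectors, so the inference $n\left(w\right)=0\Rightarrow w=0$ is legitimate only when the norm is anisotropic. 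That hypothesis holds for the real division algebras the paper cares about ($\mathbb{O}$, $p\mathbb{O}$, $\mathcal{O}$, whose norms are positive definite), but not for split symmetric composition algebras, for which the proposition is also asserted. Your route --- observing that $\left\langle \left(x*y\right)*x-n\left(x\right)y,z\right\rangle =0$ for every $z$ and invoking non-degeneracy of the polar form --- sidesteps this entirely and establishes the identity in full generality; it is the standard argument in the literature and is, strictly speaking, a repair of the paper's proof rather than merely an alternative to it. The only thing the paper's detour through the norm of the difference buys is a restatement of the same data in ``distance zero'' form; it costs generality and gains nothing.
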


In the realm of Hurwitz algebras, and similarly for symmetric composition
algebras, all automorphisms are isometries. Indeed, it sufficies to
consider that a map $\varphi:A\longrightarrow A$ such that $\varphi\left(x*y\right)=\varphi\left(x\right)*\varphi\left(y\right),$implies
that $\varphi\left(\left(x*y\right)*x\right)=n\left(x\right)*\varphi\left(y\right),$
on one side, while on the other hand, $\left(\varphi\left(x\right)*\varphi\left(y\right)\right)*\varphi\left(x\right)=n\left(\varphi\left(x\right)\right)*\varphi\left(y\right),$
so that it must be
\begin{equation}
n\left(\varphi\left(x\right)\right)=n\left(x\right),
\end{equation}
for every $x\in A$.

In fact, symmetric composition algebras are deeply intertwined with
Hurwitz algebras. Indeed, given a symmetric composition algebra $\left(A,*,n\right)$
and a norm $1$ element $a\in A$, we can utilize Kaplansky\textquoteright s
trick to define a new product
\begin{equation}
x\cdot y=\left(a*x\right)*\left(y*a\right),
\end{equation}
for every $x,y\in A$, resulting in a new composition algebra $\left(A,\cdot,n\right)$.
Now, consider the element $e=a*a$. Since (\ref{eq:x*y*x=00003Dn(x)y})
and $n\left(a\right)=1$ we then have that 
\begin{align}
e\cdot x & =\left(a*\left(a*a\right)\right)*\left(x*a\right)=x,\\
x\cdot e & =\left(a*x\right)*\left(\left(a*a\right)*a\right)=x,
\end{align}
for every $x\in A$. Consequently, $\left(A,\cdot,n\right)$ is a
unital composition algebra, or equivalently, a Hurwitz algebra. As
a direct implication of the Hurwitz theorem, symmetric composition
algebras can only have dimensions of 1, 2, 4, or 8. 

\subsubsection{Para-Hurwitz algebras }

An important class of symmetric composition algebras is that of para-Hurwitz
algebras. Given any Hurwitz algebra $\left(A,\cdot,n\right)$ a conjugation
is naturally defined as 
\begin{equation}
\overline{x}=\left\langle x,1\right\rangle 1-x,
\end{equation}
for every $x\in A$. Then, consider the new product
\begin{equation}
x\bullet y=\overline{x}\cdot\overline{y},\label{eq:para-Hurwitz}
\end{equation}
for every $x,y\in A$. Since $n\left(x\right)=n\left(\overline{x}\right)$
we have that 
\begin{equation}
n\left(x\bullet y\right)=n\left(\overline{x}\cdot\overline{y}\right)=n\left(x\right)n\left(y\right),
\end{equation}
and thus the algebra $\left(A,\bullet,n\right)$ is again a composition
algebra. On the other hand $\left(A,\bullet,n\right)$ is not an unital
algebra since 
\begin{equation}
x\bullet1=1\bullet x=\overline{x}.
\end{equation}
Moreover, the algebra is a symmetric composition algebra since it
can be shown to upholds 
\begin{align}
\left\langle x\bullet y,z\right\rangle  & =\left\langle x,y\bullet z\right\rangle ,
\end{align}
and it is then called a \emph{para-Hurwitz} algebra \cite{ElDuque Comp}.
For every Hurwitz algebra, i.e. unital composition algebra, of dimension
$>1$ we have a para-Hurwitz algebra that is a symmetric composition
algebra that we denote as $p\mathbb{C},$$p\mathbb{C}_{s}$,$p\mathbb{H},$$p\mathbb{H}_{s}$,
$p\mathbb{O}$ and $p\mathbb{O}_{s}$ respectively. It is worth noting
that all para-Hurwitz algebras are non-alternative algebras, since
\begin{align}
x\bullet\left(x\bullet y\right) & =\overline{x}\cdot\left(\overline{\overline{x}\cdot\overline{y}}\right)=\overline{x}\cdot\left(y\cdot x\right),\\
\left(x\bullet x\right)\bullet y & =\left(x\cdot x\right)\cdot\overline{y},
\end{align}
thus, in general, $x\bullet\left(x\bullet y\right)\neq\left(x\bullet x\right)\bullet y$.
Nevertheless, by Proposition \ref{prop:x*y*x} they are flexible and
more specifically 
\begin{equation}
x*y*x=n\left(x\right)*y,
\end{equation}
for every $x,y\in A$. Moreover, if the the Hurwitz algebra $\left(A,\cdot,n\right)$
is a division algebra, then also the para-Hurwitz $\left(A,*,n\right)$
defined from (\ref{eq:para-Hurwitz}) is a division algebra. Algebraic
properties of the Hurwitz algebras are summarized in Table \ref{tab:Hurwitz-para-Hurwitz}.

\subsubsection{Petersson algebras }

A generalisation of para-Hurwitz algebras was presented by Petersson
in \cite{Petersson 1969}. Starting with a Hurwitz algebra $\left(A,\cdot,n\right)$,
he introduced a new algebra $\left(A,*,n\right)$ such that 
\begin{equation}
x*y=\tau\left(\overline{x}\right)\cdot\tau^{2}\left(\overline{x}\right),
\end{equation}
where $\tau$ is an order three automorphisms, i.e. $\tau^{3}=\text{id}$.
The new algebra, typically denoted as $A_{\tau}$, becomes a composition
algebra that is non-unital. Moreover, Petersson demonstrated that
over an algebraically closed field like $\mathbb{C}$, there exists
a specific automorphism that results in a non-para-Hurwitz algebra.
This new algebra is a symmetric composition algebra containing idempotent
elements.

Petersson algebras are crucial in characterizing symmetric composition
algebras since we have the following 
\begin{thm}
\emph{\label{thm:(Elduque-Perez-)-An}(Elduque-Perez \cite[Th. 2.5]{EP96})
}An algebra $A$ is a symmetric composition algebra with an nonzero
idempotent if and only if there exists a Hurwitz algebra $H$ and
an automorphisms $\tau$ of $H$ such that $A$ is isomorphic to the
algebra $H_{\tau}$.
\end{thm}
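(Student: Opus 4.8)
The statement is an equivalence, and the reverse implication is the short one. The plan is to first record that any Petersson algebra $H_{\tau}$ is a symmetric composition algebra (this is exactly the content recalled just above) and that it carries a nonzero idempotent: since every automorphism fixes the unit and $\overline{1}=1$, the unit $1$ of $H$ satisfies $1*1=\tau(\overline{1})\cdot\tau^{2}(\overline{1})=\tau(1)\cdot\tau^{2}(1)=1$, so $1$ is a nonzero idempotent of $H_{\tau}$. Thus $A\cong H_{\tau}$ already forces $A$ to be a symmetric composition algebra with a nonzero idempotent.

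For the forward implication, let $(A,*,n)$ be symmetric composition with a nonzero idempotent $e$. First I would pin down $e$: applying Proposition \ref{prop:x*y*x} with $x=y=e$ gives $(e*e)*e=n(e)e$, and since $e*e=e$ this reads $e=n(e)e$, so $n(e)=1$; applying it with $x=e$ gives $(e*y)*e=y$, i.e. $R_{e}L_{e}=\mathrm{id}$, so in finite dimension $L_{e}$ and $R_{e}=L_{e}^{-1}$ are mutually inverse isometries of $n$. I would then run Kaplansky's trick with $a=e$: the product $x\cdot y=(e*x)*(y*e)$ turns $A$ into a Hurwitz algebra $H=(A,\cdot,n)$ with unit $e$, as established above. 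Inverting this relation gives the clean bridge formula $x*y=R_{e}(x)\cdot L_{e}(y)$. Next I would set $\tau=L_{e}^{2}$ and show $\tau^{3}=\mathrm{id}$: polarising both $(x*y)*x=n(x)y$ and its flexible companion $x*(y*x)=n(x)y$ and specialising to $y=z=e$ yields $R_{e}^{2}+L_{e}=\langle\,\cdot\,,e\rangle e=L_{e}^{2}+R_{e}$; feeding in $R_{e}=L_{e}^{-1}$ collapses this to $L_{e}^{3}=\langle\,\cdot\,,e\rangle e-\mathrm{id}$, which is precisely the conjugation $\overline{\phantom{x}}$ of $H$. Since conjugation squares to the identity this gives $L_{e}^{6}=\mathrm{id}$, hence $\tau^{3}=\mathrm{id}$, and it also rewrites the bridge formula as $x*y=\tau(\overline{x})\cdot\tau^{2}(\overline{y})$, because $\tau(\overline{x})=L_{e}^{5}(x)=L_{e}^{-1}(x)=R_{e}(x)$ and $\tau^{2}(\overline{y})=L_{e}^{7}(y)=L_{e}(y)$.

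The crux, and the step I expect to be the main obstacle, is to show that $\tau=L_{e}^{2}$ is genuinely an automorphism of the Hurwitz algebra $H$; once this is in hand the identity map exhibits $A$ as the Petersson algebra $H_{\tau}$ and the theorem follows. Because $\tau$ fixes $e$ and commutes with $L_{e}$, it suffices to prove $\tau(x*y)=\tau(x)*\tau(y)$ for the original product, i.e. $L_{e}^{2}(x*y)=L_{e}^{2}(x)*L_{e}^{2}(y)$. I would attack this purely computationally from the polarised composition identities, the most useful being the ``shift'' relation $e*(y*z)=\langle e,z\rangle y-z*(y*e)$ obtained by setting $x=e$ in the polarisation of $x*(y*x)=n(x)y$: applying it twice reduces $L_{e}^{2}(x*y)$ to an expression involving only $L_{e}^{\pm1}$ of $x,y$, single $*$-products and inner products, and the same reduction applied to $\tau(x)*\tau(y)=\overline{x}*L_{e}(y)$ should match it term by term. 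Conceptually this is the local triality phenomenon for symmetric composition algebras; the genuine difficulty is the careful bookkeeping of these reductions rather than any single clever identity. As a consistency check the whole scheme degenerates correctly in the para-Hurwitz case, where $e$ is the para-unit, $L_{e}$ coincides with conjugation $\overline{\phantom{x}}$, and $\tau=\mathrm{id}$, so that $x*y=\overline{x}\cdot\overline{y}$ is recovered at once.
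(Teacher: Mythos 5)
The paper never proves this statement: it is quoted as an external result from Elduque-P\'erez \cite[Th. 2.5]{EP96}, so there is no internal argument to compare against and your proposal has to stand on its own. It very nearly does, and it follows what is essentially the standard route. The reverse implication is complete. In the forward direction all the preparatory steps are correct and correctly justified: $n\left(e\right)=1$; the relations $R_{e}L_{e}=L_{e}R_{e}=\mathrm{id}$ (note you get both directly from $\left(e*y\right)*e=n\left(e\right)y$ and $e*\left(y*e\right)=n\left(e\right)y$, so finite-dimensionality is not even needed); the Kaplansky product making $H=\left(A,\cdot,n\right)$ a Hurwitz algebra with unit $e$; the bridge formula $x*y=R_{e}\left(x\right)\cdot L_{e}\left(y\right)$; the two linear identities $R_{e}^{2}+L_{e}=L_{e}^{2}+R_{e}=\left\langle \cdot,e\right\rangle e$, whence $L_{e}^{3}=\left\langle \cdot,e\right\rangle e-\mathrm{id}$ is the conjugation of $H$, $\tau^{3}=L_{e}^{6}=\mathrm{id}$, and the rewriting $x*y=\tau\left(\overline{x}\right)\cdot\tau^{2}\left(\overline{y}\right)$. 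Your reduction of the crux to the $*$-automorphism property $L_{e}^{2}\left(x*y\right)=L_{e}^{2}\left(x\right)*L_{e}^{2}\left(y\right)$ is also valid.

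The gap is in how you propose to finish that crux, and it is concrete: the identity you would match term by term, $\tau\left(x\right)*\tau\left(y\right)=\overline{x}*L_{e}\left(y\right)$, is false. Your own para-Hurwitz consistency check exposes it: there $\tau=\mathrm{id}$ and $L_{e}=\overline{\phantom{x}}$, so the claimed identity reads $x*y=\overline{x}*\overline{y}=x\cdot y$, which fails already for octonions. What is true is the $\cdot$-product version, $\tau\left(x\right)\cdot\tau\left(y\right)=L_{e}\left(\tau\left(x\right)\right)*R_{e}\left(\tau\left(y\right)\right)=L_{e}^{3}\left(x\right)*L_{e}\left(y\right)=\overline{x}*L_{e}\left(y\right)$ -- which is precisely why the $\cdot$-automorphism property is equivalent to the $*$-automorphism property, but it is not the expression your plan needs. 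Followed literally, the term-by-term comparison would not close. The repair is short: expand the right-hand side of the target as $L_{e}^{2}\left(x\right)*L_{e}^{2}\left(y\right)=\left(\left\langle x,e\right\rangle e-R_{e}\left(x\right)\right)*\left(\left\langle y,e\right\rangle e-R_{e}\left(y\right)\right)$ using $L_{e}^{2}=\left\langle \cdot,e\right\rangle e-R_{e}$, then simplify with $L_{e}R_{e}=\mathrm{id}$ and $R_{e}^{2}=\left\langle \cdot,e\right\rangle e-L_{e}$; this collapses to $\left\langle y,e\right\rangle L_{e}\left(x\right)-\left\langle x,e\right\rangle y+R_{e}\left(x\right)*R_{e}\left(y\right)$, which is exactly what your double application of the shift relation gives for $L_{e}^{2}\left(x*y\right)$, once one notes $\left\langle e,R_{e}\left(x\right)\right\rangle =\left\langle x,e*e\right\rangle =\left\langle x,e\right\rangle$. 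With that substitution the argument is complete, so the flaw is real but local and fixable.
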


\section{\label{sec:Octonions,-Paraoctonions-and}Octonions, paraoctonions
and the real Okubo algebra}

In this section, we delve into the three division algebras of primary
interest: the octonions $\mathbb{O}$, the paraoctonions $p\mathbb{O}$
and the real Okubo algebra $\mathcal{O}$. The main objective of this
section is summarized in Table \ref{tab:Oku-Para-Octo} that synoptically
illustrates the relationships between the product of the three algebras.
It's crucial to note that although it is possible to switch from one
algebra to another by altering the product definition, none of these
algebras are isomorphic to the others: e.g. the octonions $\mathbb{O}$
are alternative and unital, para-octonions $p\mathbb{O}$ are nor
alternative nor unital but do have a para-unit, while the Okubo algebra
$\mathcal{O}$ is non-alternative and only has idempotents elements.
It's also worth highlighting that the Okubo algebra $\mathcal{O}$
is the least structured among these algebras.

\subsection{The algebra of octonions}

The algebra of octonions $\mathbb{O}$ is the only division Hurwitz
algebra with a dimension of eight. We define the composition algebra
of octonion $\left(\mathbb{O},\cdot,n\right)$ as the eight dimensional
real vector space with basis $\left\{ \text{i}_{0}=1,\text{i}_{1},...,\text{i}_{7}\right\} $
with a bilinear product encoded through the \emph{Fano plane} and
explained in Fig. \ref{fig:octonion fano plane}. 
\begin{figure}
\centering{}\includegraphics[scale=0.08]{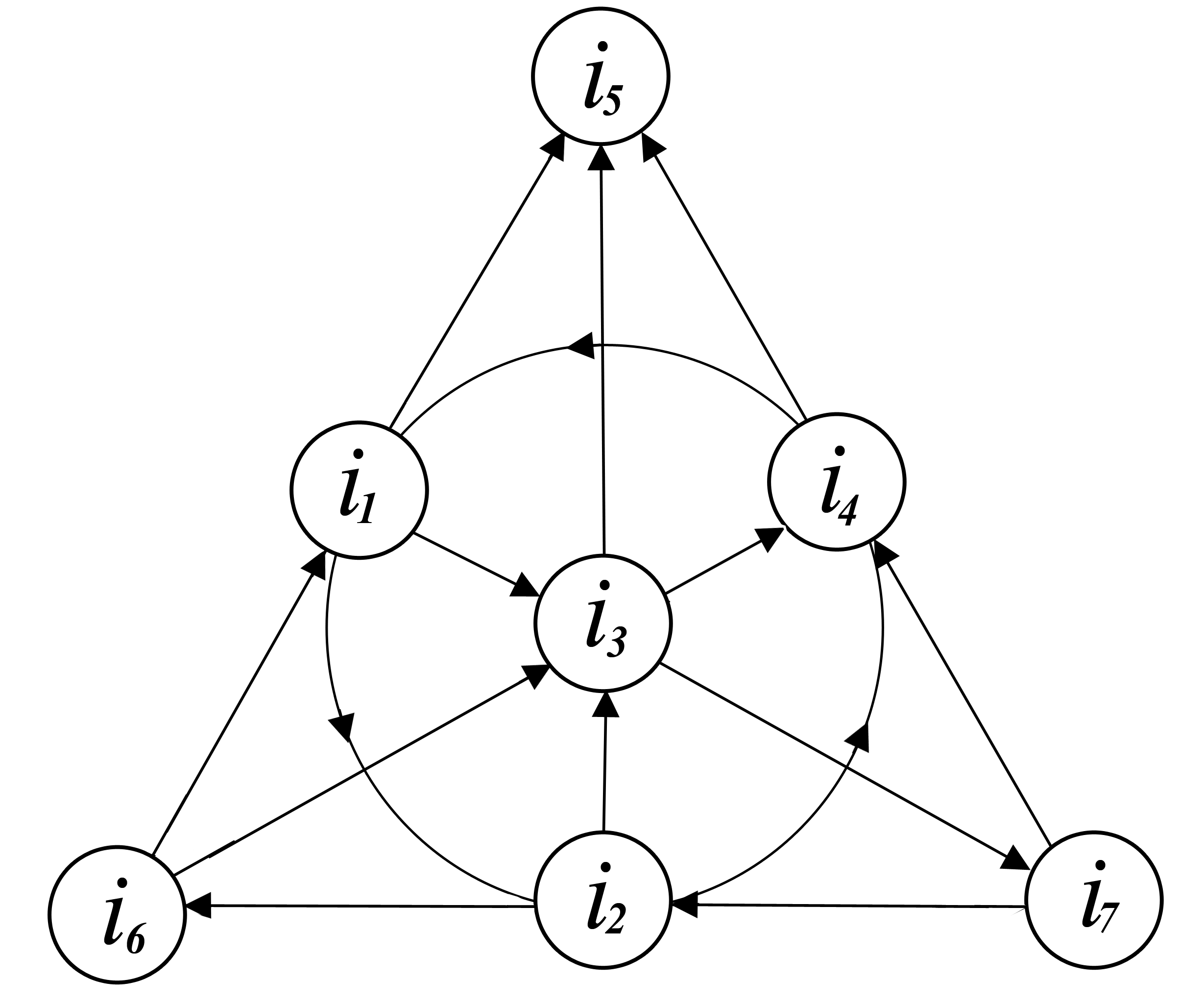}\caption{\label{fig:octonion fano plane} Multiplication rules for octonions
$\mathbb{O}$ as real vector space $\mathbb{R}^{8}$ in the basis
$\left\{ \text{i}_{0}=1,\text{i}_{1},...,\text{i}_{7}\right\} $.
Lines in the Fano plane identify associative triples of the product
and the arrow indicates the sign (positive in the sense of the arrow
and negative in the opposite sense). In addition to the previous rules
it is intended that $\text{i}_{k}^{2}=-1$.}
\end{figure}

Given an element $x\in\mathbb{O}$ with decomposition 
\begin{equation}
x=x_{0}+\stackrel[k=1]{7}{\sum}x_{k}\text{i}_{k},
\end{equation}
the norm $n$ is the obvious Euclidean one defined by 
\begin{equation}
n\left(x\right)=x_{0}^{2}+x_{1}^{2}+x_{2}^{2}+x_{3}^{2}+x_{4}^{2}+x_{5}^{2}+x_{6}^{2}+x_{7}^{2},\label{eq:octonionic norm}
\end{equation}
 for which the conjugation results 
\begin{equation}
\overline{x}=x_{0}-\stackrel[k=1]{7}{\sum}x_{k}i_{k},
\end{equation}
 and therefore 
\begin{equation}
n\left(x\right)=\overline{x}\cdot x,\label{eq:n(x)=00003Dxcx}
\end{equation}
as it happens for every Hurwitz algebra. Then a look at (\ref{eq:octonionic norm})
shows that $n\left(x\right)=0$ if and only if $x=0$ and thus the
inverse of a non-zero element of the octonions is easily found as
\begin{equation}
x^{-1}=\frac{\overline{x}}{n\left(x\right)}.
\end{equation}
 Also, from (\ref{eq:n(x)=00003Dxcx}) we have that the octonionic
inner product is given by 
\begin{align}
\left\langle x,y\right\rangle  & =x\overline{y}+y\overline{x},\label{eq:octonionic inner}
\end{align}
so that $\left\langle x,x\right\rangle =2n\left(x\right)$.

Straightforward calculations shows that the algebra of octonions is
neither commutative nor associative, but it is alternative. But, since
any two elements of an alternative algebra generate an associative
subalgebra, it is then easy to see that $\left(\mathbb{O},\cdot,n\right)$
is indeed an Hurwitz algebra since it is unital and 
\begin{align}
n\left(x\cdot y\right) & =\left(\overline{x\cdot y}\right)\cdot\left(x\cdot y\right)\\
 & =\left(\overline{y}\cdot\overline{x}\right)\cdot\left(x\cdot y\right)\nonumber \\
 & =\overline{y}\cdot\left(\overline{x}\cdot x\right)\cdot y=n\left(x\right)n\left(y\right).\nonumber 
\end{align}
Since the algebra is a composition algebra and any non-zero element
has non-zero norm, i.e. $n\left(x\right)\neq0$ then $\left(\mathbb{O},\cdot,n\right)$
is also a division algebra since if $x\cdot y=0$ then 
\begin{equation}
n\left(x\cdot y\right)=n\left(x\right)n\left(y\right)=0,
\end{equation}
which implies that $x=0$ or $y=0$. Moreover, an important relation
that will be used later on in the definition of the projective plane
is the following consequence of alternativity, i.e.
\begin{equation}
\overline{x}\cdot\left(x\cdot y\right)=n\left(x\right)y,\label{eq:Oct-xb*x*y=00003Dn(x)y}
\end{equation}
 for every $x,y\in\mathbb{O}$. 

\subsubsection{Moufang identities}

While octonions are not a group under multiplication due to their
lack of associativity, non-zero octonions form a \emph{Moufang loop},
i.e. a loop that satisfy\emph{ }the following \emph{Moufang identities},
i.e.
\begin{align}
\left(\left(x\cdot y\right)\cdot x\right)\cdot z & =x\cdot\left(y\cdot\left(x\cdot z\right)\right),\\
\left(\left(z\cdot x\right)\cdot y\right)\cdot x & =z\cdot\left(x\cdot\left(y\cdot x\right)\right),\\
\left(x\cdot y\right)\cdot\left(z\cdot x\right) & =x\cdot\left(\left(y\cdot z\right)\cdot x\right),\label{eq:MoufangIdent3}
\end{align}
for every $x,y,z\in\mathbb{O}$. Moufang identities are particularly
relevant since they are historically linked to geometrical properties
of the Moufang plane (see Sec. \ref{sec:Discussions-and-verifications}).
It is worth noting that any unital algebra satisfying Moufang identities
is an alternative algebra. Indeed, setting $z=1$ Moufang identities
turn into the flexible identity, i.e.
\begin{equation}
\left(x\cdot y\right)\cdot x=x\cdot\left(y\cdot x\right),
\end{equation}
 while setting $y=1$ we have the identity for the left and right
alternativity, i.e.
\begin{align}
\left(x\cdot x\right)\cdot z & =x\cdot\left(x\cdot z\right),\\
\left(z\cdot x\right)\cdot x & =z\cdot\left(x\cdot x\right).
\end{align}
 Thus, non alternative algebras do not uphold Moufang identities.

\subsection{Okubo algebras}

Symmetric composition algebras might have remained relatively unnoticed
among algebraists had Petersson \cite{Petersson 1969} not demonstrated
that for every field $\mathbb{F}$ there exists a unique eight-dimensional
algebra that is not a para-Hurwitz algebra. This result essentially
broadened the reach of the Hurwitz classification theorem. On the
other hand, Okubo algebras were independently developed by mathematical
physicist Susumo Okubo in the course of his work on quarks and Gell-Mann
matrices while pursuing an algebra that featured $\text{SU}\left(3\right)$
as automorphism group instead of $\text{G}_{2}$ as in the case of
Octonions\cite{Okubo95}. Even more interestingly, Okubo discovered
that such algebra is a division composition algebra and a deformation
of its product would give back the octonions\cite{Okubo 1978,Okubo 78c,Okubo1978b}.
It was with more recent works \cite{KMRT}, with the joint efforts
of Osborn, Elduque and Myung \cite{OO81a,OO81b,Elduque 91,Elduque 93,Elduque Myung 90,ElduQueAut},
that the context of Okubo algebras was fully elucidated. 

Following \cite{Okubo 1978} and \cite{Elduque Myung 90}, we define
the real Okubo Algebra $\mathcal{O}$ as the set of three by three
Hermitian traceless matrices over the complex numbers $\mathbb{C}$
with the following bilinear product 
\begin{equation}
x*y=\mu\cdot xy+\overline{\mu}\cdot yx-\frac{1}{3}\text{Tr}\left(xy\right),\label{eq:product Ok}
\end{equation}
where $\mu=\nicefrac{1}{6}\left(3+\text{i}\sqrt{3}\right)$ and the
juxtaposition is the ordinary associative product between matrices.
It is worth noting that (\ref{eq:product Ok}) can be seen as a modification
of the Jordanian product. Indeed, setting $\mu=\nicefrac{1}{2}$ and
negletting the last term, we retrieve the usual Jordan product over
Hermitian traceless matrices, i.e.
\begin{equation}
x\circ y=\frac{1}{2}xy+\frac{1}{2}yx.
\end{equation}
Nevertheless, Hermitian traceless matrices are not closed under such
product, thus requiring the additional term $-\nicefrac{1}{3}\text{Tr}\left(xy\right)$
for the closure of the algebra. Indeed, setting in (\ref{eq:product Ok})
$\text{Im}\mu=0$, one retrieves from the traceless part of the exceptional
Jordan algebra $\mathfrak{J}_{3}\left(\mathbb{C}\right)$, whose derivation
Lie algebra is $\mathfrak{su}\left(3\right)$.

Analyzing (\ref{eq:product Ok}), it becomes evident that the resulting
algebra is neither unital, associative, nor alternative. Nonetheless,
$\mathcal{O}$ is a\emph{ flexible }algebra, i.e. 
\begin{equation}
x*\left(y*x\right)=\left(x*y\right)*x,
\end{equation}
which will turn out to be an even more useful property than alternativity
in the definition of the projective plane. Even though the Okubo algebra
is not unital, it does have idempotents, i.e. $e*e=e$, such as 
\begin{equation}
e=\left(\begin{array}{ccc}
2 & 0 & 0\\
0 & -1 & 0\\
0 & 0 & -1
\end{array}\right),\label{eq:idemp}
\end{equation}
that together with
\begin{equation}
\begin{array}{ccc}
\text{i}_{1}=\sqrt{3}\left(\begin{array}{ccc}
0 & 1 & 0\\
1 & 0 & 0\\
0 & 0 & 0
\end{array}\right), &  & \text{i}_{2}=\sqrt{3}\left(\begin{array}{ccc}
0 & 0 & 1\\
0 & 0 & 0\\
1 & 0 & 0
\end{array}\right),\\
\text{i}_{3}=\sqrt{3}\left(\begin{array}{ccc}
0 & 0 & 0\\
0 & 0 & 1\\
0 & 1 & 0
\end{array}\right), &  & \text{i}_{4}=\sqrt{3}\left(\begin{array}{ccc}
1 & 0 & 0\\
0 & -1 & 0\\
0 & 0 & 0
\end{array}\right),\\
\text{i}_{5}=\sqrt{3}\left(\begin{array}{ccc}
0 & -i & 0\\
i & 0 & 0\\
0 & 0 & 0
\end{array}\right), &  & \text{i}_{6}=\sqrt{3}\left(\begin{array}{ccc}
0 & 0 & -i\\
0 & 0 & 0\\
i & 0 & 0
\end{array}\right),\\
\text{i}_{7}=\sqrt{3}\left(\begin{array}{ccc}
0 & 0 & 0\\
0 & 0 & -i\\
0 & i & 0
\end{array}\right),
\end{array}\label{eq:definizione i ottonioniche}
\end{equation}
form a basis for $\mathcal{O}$ that has real dimension $8$.\footnote{Actually, the 8 matrices three by three (2.19)-(2.20) are, up to an
overall factor $\sqrt{3}$, the Gell-Mann matrices. In particular,
the idempotent (2.19) is $-\sqrt{3}$ times the eighth Gell-Mann matrix
$\lambda_{8}$,with the first and third rows and columns exchanged.} It is worth noting that the choice of the idempotent $e$ as in (\ref{eq:idemp})
does not yield to any loss of generality for the subsequent development
of our work since all idempotents are conjugate under the automorphism
group (cfr. \cite[Thm. 20]{ElduQueAut}). The choice of this special
basis is motivated on the fact that it will turn to be an orthonormal
basis with respect to the norm in (\ref{eq:Norm-Ok}) and that through
a special bijective map between Okubo algebra and octonions the elements
of the basis $\left\{ e,\text{i}_{1},...,\text{i}_{7}\right\} $ here
defined will correspond to the octonionic one previously defined. 

Let us consider the quadratic form $n$ over Okubo algebra, given
by 
\begin{equation}
n\left(x\right)=\frac{1}{6}\text{Tr}\left(x^{2}\right),\label{eq:Norm-Ok}
\end{equation}
for every $x\in\mathcal{O}$. It is straightforward to see that this
\emph{norm} has signature $(8,0)$, is associative and composition
over the real Okubo algebra, i.e. 
\begin{align}
n\left(x*y\right) & =n\left(x\right)n\left(y\right),\\
\left\langle x*y,z\right\rangle  & =\left\langle x,y*z\right\rangle ,\label{eq:symmetric polar-1}
\end{align}
where $\left\langle \cdot,\cdot\right\rangle $ is the \emph{polar
form} given by 
\begin{equation}
\left\langle x,y\right\rangle =n\left(x+y\right)-n\left(x\right)-n\left(y\right).\label{eq:polar form}
\end{equation}
Therefore, Okubo algebra is a \emph{symmetric composition} algebra
\cite[Ch. VIII]{KMRT} and, thus, enjoying the notable relation 
\begin{equation}
x*\left(y*x\right)=\left(x*y\right)*x=n\left(x\right)y.\label{eq:symm-comp}
\end{equation}

For our purposes it will be of paramount importance to notice the
following \cite{OkMy80} 
\begin{prop}
\label{prop:division}The Okubo Algebra is a division algebra.
\end{prop}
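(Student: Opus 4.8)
The plan is to deduce the division property directly from the two facts already established about the norm: that it is multiplicative, $n\left(x*y\right)=n\left(x\right)n\left(y\right)$, and that it has signature $(8,0)$. This mirrors exactly the argument used above for the octonions, and the composition structure is what makes it work without any appeal to alternativity or to a unit.

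First I would record that the norm is \emph{anisotropic}, i.e. that $n\left(x\right)=0$ if and only if $x=0$. This is immediate from the signature being $(8,0)$, so that $n$ is positive definite; concretely, if $x$ is a traceless Hermitian matrix then $x=x^{\dagger}$ gives $\text{Tr}\left(x^{2}\right)=\text{Tr}\left(xx^{\dagger}\right)=\sum_{j,k}\left|x_{jk}\right|^{2}\geq0$, with equality precisely when $x=0$. Hence by the definition of the norm in (\ref{eq:Norm-Ok}) we have $n\left(x\right)>0$ for every nonzero $x\in\mathcal{O}$.

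Next, suppose $x,y\in\mathcal{O}$ satisfy $x*y=0$. Applying the multiplicative property of the norm,
\[
0=n\left(0\right)=n\left(x*y\right)=n\left(x\right)n\left(y\right),
\]
so that $n\left(x\right)=0$ or $n\left(y\right)=0$. By anisotropy this forces $x=0$ or $y=0$, which is exactly the statement that $\mathcal{O}$ has no zero divisors and is therefore a division algebra.

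I do not expect any genuine obstacle here: the only nontrivial input is the positive-definiteness of the trace form on traceless Hermitian matrices, which is the elementary computation above and is already recorded in the claim that the norm has signature $(8,0)$. Everything else is a formal consequence of the composition identity, and since $\mathcal{O}$ is finite-dimensional the absence of zero divisors is in any case equivalent to all nonzero left and right multiplications being bijective, so no separate surjectivity argument is needed.
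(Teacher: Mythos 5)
Your proof is correct, and it is in fact the direct composition-algebra argument -- the same one the paper itself uses earlier to show that the octonions are a division algebra. The paper's own proof of this proposition takes a slightly different route: assuming $d\neq 0$ is a left zero divisor with $d*x=0$, it invokes the symmetric composition identity (\ref{eq:symm-comp}) to get $\left(d*x\right)*d=n\left(d\right)x=0$, which pins down $n\left(d\right)=0$ specifically, and then concludes by checking that the norm (\ref{eq:norma-d}) is anisotropic by an explicit coordinate computation. Your version skips the symmetric composition identity altogether: from $n\left(x\right)n\left(y\right)=n\left(x*y\right)=0$ you simply use that $\mathbb{R}$ has no zero divisors to conclude $n\left(x\right)=0$ or $n\left(y\right)=0$, and anisotropy then kills one of the factors -- either alternative suffices. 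This is marginally more elementary (only multiplicativity plus positive-definiteness are needed), and your verification of anisotropy via $\mathrm{Tr}\left(x^{2}\right)=\mathrm{Tr}\left(xx^{\dagger}\right)=\sum_{j,k}\left|x_{jk}\right|^{2}$ is cleaner than the paper's expansion of $n\left(d\right)$ in coordinates, whose concluding sentence is in any case somewhat garbled. What the paper's detour buys is a template that works verbatim in any symmetric composition algebra over any field once the norm is known to be anisotropic, without appealing to the field being $\mathbb{R}$; but over $\mathbb{R}$ with a positive-definite norm, as here, your shorter argument is complete. Your closing remark that finite-dimensionality upgrades ``no zero divisors'' to bijectivity of left and right multiplications is also correct and is exactly what the paper later relies on (Proposition \ref{prop:SolutionLinearEq a*x=00003Db}) to solve linear equations in $\mathcal{O}$.
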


\begin{proof}
Without any loss of generality, let us suppose that $d\neq0$ is a
left divisor of zero, i.e. $d*x=0$, then 
\[
n\left(d*x\right)=n\left(d\right)n\left(x\right)=0.
\]
But, since the algebra is symmetric composition algebra, for the (\ref{eq:symm-comp})
we also have

\begin{align}
\left(d*x\right)*d & =0=n\left(d\right)x,
\end{align}
and therefore $n\left(d\right)=0$, i.e. $\text{Tr}\left(d^{2}\right)=0$.
But, since the element $d$ is of the form 
\begin{equation}
d=\left(\begin{array}{ccc}
\xi_{1} & x_{1}+\text{i}y_{1} & x_{2}+\text{i}y_{2}\\
x_{1}-\text{i}y_{1} & \xi_{2} & x_{3}+\text{i}y_{3}\\
x_{2}-\text{i}y_{2} & x_{3}-\text{i}y_{3} & -\xi_{1}-\xi_{2}
\end{array}\right),
\end{equation}
where $x_{i},y_{i},\xi_{i}\in\mathbb{R}$, the norm $n\left(d\right)$
is given by 
\begin{equation}
n\left(d\right)=\frac{1}{3}\left(x_{1}^{2}+x_{2}^{2}+x_{3}^{2}+y_{1}^{2}+y_{2}^{2}+y_{3}^{2}+\xi_{1}^{2}+\xi_{2}^{2}+\xi_{1}\xi_{2}\right),\label{eq:norma-d}
\end{equation}
which yields that $\text{Tr}\left(d^{2}\right)\neq0$ in case of $\xi_{1},$$\xi_{2}\in\mathbb{R}$
and $\xi_{1},\xi_{2}\neq0$. 
\end{proof}
Unfortunately, since $\mathcal{O}$ is not a unital algebra, an element
$x$ does not have an inverse. This implies that, concerning its product,
the Okubo algebra is not a loop (as it was in the case of the octonions
that were a Moufang loop) but only a quasigroup. Nevertheless, considering
the existence of the idempotent $e$, and inspired by the identity
\[
x*\left(e*x\right)=\left(x*e\right)*x=n\left(x\right)e,
\]
we can define $\left(x\right)_{L}^{-1}=n\left(x\right)^{-1}\left(e*x\right)$
and $\left(x\right)_{R}^{-1}=n\left(x\right)^{-1}\left(x*e\right)$
so that, given a definite choice of the idempotent $e$, one has 
\[
\left(x\right)_{L}^{-1}*x=x*\left(x\right)_{R}^{-1}=e.
\]
As an implication of the previous argument we have the following 
\begin{prop}
\label{prop:SolutionLinearEq a*x=00003Db}An equation of the kind
\begin{equation}
a*x=b,\,\,\text{or}\,\,\,\,x*a=b,
\end{equation}
has a unique solution which is respectively given by 
\begin{equation}
x=\frac{1}{n\left(a\right)}b*a,\,\,\,\text{or}\,\,\,\,x=\frac{1}{n\left(a\right)}a*b,
\end{equation}
for every $a,b\in\mathcal{O}$, with $a\neq0$.
\end{prop}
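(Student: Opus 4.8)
The plan is to exhibit the two claimed formulas as solutions by direct substitution, drawing on the symmetric composition identity (\ref{eq:symm-comp}), and then to derive uniqueness from the division property of $\mathcal{O}$ established in Proposition \ref{prop:division}. Before doing anything else I would record that the division by $n\left(a\right)$ in the statement is legitimate: since the norm on $\mathcal{O}$ has signature $(8,0)$ (equivalently, since $\mathcal{O}$ is a division algebra, its norm is anisotropic), $a\neq0$ forces $n\left(a\right)\neq0$.

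For existence in the left-handed case, set $x=\tfrac{1}{n\left(a\right)}\,b*a$ and compute $a*x=\tfrac{1}{n\left(a\right)}\,a*\left(b*a\right)$. Applying the left half of (\ref{eq:symm-comp}) with the substitution $x\mapsto a$, $y\mapsto b$ gives $a*\left(b*a\right)=n\left(a\right)b$, so $a*x=b$ as required. The right-handed case is entirely symmetric: with $x=\tfrac{1}{n\left(a\right)}\,a*b$ one obtains $x*a=\tfrac{1}{n\left(a\right)}\left(a*b\right)*a$, and the right half of (\ref{eq:symm-comp}) with the same substitution yields $\left(a*b\right)*a=n\left(a\right)b$, whence $x*a=b$. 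For uniqueness, suppose $a*x=a*x'$; then $a*\left(x-x'\right)=0$, and since $a\neq0$ and $\mathcal{O}$ has no zero divisors by Proposition \ref{prop:division}, it follows that $x=x'$. The equation $x*a=b$ is handled the same way using the absence of right zero divisors. (Alternatively, since $\mathcal{O}$ is $8$-dimensional, injectivity of the left-multiplication map $L_{a}$ already forces bijectivity, delivering existence and uniqueness at once; I nonetheless favour the explicit substitution because the proposition asserts a specific formula.)

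I do not anticipate a genuine obstacle here: the content of the proposition is essentially a repackaging of the symmetric composition identity (\ref{eq:symm-comp}) together with anisotropy of the norm. The single point deserving a moment's care is confirming that $n\left(a\right)\neq0$ for every nonzero $a$, which is precisely where the division-algebra (positive-definiteness) input enters; once that is in hand, each verification collapses to a one-line application of (\ref{eq:symm-comp}).
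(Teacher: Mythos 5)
Your proof is correct and rests on exactly the same ingredients as the paper's: the symmetric composition identity (\ref{eq:symm-comp}) together with the division property of $\mathcal{O}$. The only (minor) difference is organizational — the paper derives the formula by right-multiplying the equation $a*x=b$ by $a$ and using $\left(a*x\right)*a=n\left(a\right)x$, which gives uniqueness directly and leaves existence implicit, whereas you verify existence by substitution and prove uniqueness via the absence of zero divisors, making your write-up marginally more explicit on the existence side.
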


\begin{proof}
Let us consider the equation $a*x=b$. Since $\mathcal{O}$ is a division
algebra and $a\neq0$ we can multiply by $a$ obtaining
\begin{equation}
\left(a*x\right)*a=b*a,
\end{equation}
but since $\left(a*x\right)*a=n\left(a\right)x$ and $n\left(a\right)\in\mathbb{R}$,
we then have $x=n\left(a\right)^{-1}b*a$. A similar argument is valid
for the case of $x*a=b$.
\end{proof}
Although the above proposition is straightforward, it has profound
geometrical implications, as it confirms the applicability of affine
and projective axioms to planes over the Okubo algebra. This topic
will be elaborated upon in subsequent sections.

\subsection{\label{subsec:Conjugation-and-the}Conjugation and the Trivolution}

In unital composition algebras, as noted earlier, there exists a canonical
involution, an order-two antihomomorphism known as \emph{conjugation}.
This can be defined using the orthogonal projection of the unit element
as
\begin{equation}
x\mapsto\overline{x}=\left\langle x,1\right\rangle 1-x.\label{eq:coniugazione}
\end{equation}
This canonical involution has the distinctive property of being an
antihomomorphism with respect to the product, i.e., $\overline{x\cdot y}=\overline{y}\cdot\overline{x},$
and the basic property with the norm of $x\cdot\overline{x}=n\left(x\right)1$.

For non-unital composition algebras, the previous definition isn't
applicable. However, if an idempotent element $e$ is present in the
algebra, one might be tempted to extend the previous definition 

\begin{equation}
x\mapsto\widetilde{x}=\left\langle x,e\right\rangle e-x,
\end{equation}
to investigate if similar properties remain valid.

In the case of a para-Hurwitz $p\mathbb{K}$ obtained from an Hurwitz
algebra $\left(\mathbb{K},\cdot,n\right)$ imposing the new product
$x\bullet y=\overline{x}\cdot\overline{y},$for every $x,y\in\mathbb{K}$
we have a special element, called para-unit, i.e. $1\in p\mathbb{K}$
such that $1\bullet x=\overline{x}.$ Thus, we might want to have
a look to the map $L_{1}$ given by left multiplication by the para-unit,
i.e.
\begin{align}
x & \mapsto L_{1}\left(x\right)=1\bullet x.
\end{align}
Clearly, the same arguments apply to $R_{1}\left(x\right)$ since
$x\bullet1=1\bullet x$. Indeed, we notice that $L_{1}^{2}\left(x\right)=x$
thus is an involution and, since 
\begin{align}
L_{1}\left(x\bullet y\right) & =1\bullet\left(x\bullet y\right)=y\cdot x,
\end{align}
and
\begin{align}
L_{1}\left(x\right)\bullet L_{1}\left(y\right) & =\overline{x}\bullet\overline{y}=x\cdot y,
\end{align}
the map is also an anti-homomorphism, i.e. $L_{1}\left(x\bullet y\right)=L_{1}\left(y\right)\bullet L_{1}\left(x\right)$.
Finally, since 
\begin{equation}
x\bullet L_{1}\left(x\right)=x\bullet1\bullet x=n\left(x\right)1.
\end{equation}
Thus, the order-two anti-homomorphism $x\mapsto\widetilde{x}=L_{1}\left(x\right)$
realises over the para-Hurwitz algebra $p\mathbb{K}$ all the main
features of the canonical involution or conjugation of the Hurwitz
algebra $\mathbb{K}$. 

Unfortunately, the situation within the Okubo algebra is less straightforward.
Indeed, if we consider the idempotent $e$ and define the map
\begin{equation}
x\mapsto\left\langle x,e\right\rangle e-x,
\end{equation}
it exhibits order two but is not neither a homomorphism or an antihomomorphism.
On the other hand, if we consider the maps 
\begin{align}
x & \longrightarrow L_{e}\left(x\right)=e*x,\label{eq:azione a sinistra}\\
x & \longrightarrow R_{e}\left(x\right)=x*e,\label{eq:azioni a destra}
\end{align}
 then we do have in both cases a nice relation with the norm, since
\begin{align}
x*L_{e}\left(x\right) & =n\left(x\right)e,\\
R_{e}\left(x\right)*x & =n\left(x\right)e,
\end{align}
Yet, even if $R_{e}\circ L_{e}=\text{id}$ holds true as in the para-Hurwitz
case, neither $L_{e}$ and $R_{e}$ are automorphism nor antiautomorphism.
On the other hand, if we generalise (\ref{eq:coniugazione}) with
the following map
\begin{equation}
x\mapsto\left\langle x,e\right\rangle e-x*e,
\end{equation}
 we have indeed a special automorphism, that we call $\tau$, which,
nevertheless, is not of order two but of order three. Therefore, while
it is not possible to have a involution over Okubo algebra that enjoys
the same properties of the conjugation of Hurwitz algebras, it is
possible to define something in a similar fashion such as an order-three
automorphism $\tau$, hereafter referred to as a\emph{ trivolution},
defined as 
\begin{equation}
x\longrightarrow\tau\left(x\right)=\left\langle x,e\right\rangle e-x*e,\label{eq:tau}
\end{equation}
or, equivalently, as
\begin{align}
x & \longrightarrow\tau\left(x\right)=L_{e}\left(x\right)^{2}=e*\left(e*x\right),\\
x & \longrightarrow\tau^{2}\left(x\right)=R_{e}\left(x\right)^{2}=\left(x*e\right)*e.
\end{align}
It is easy to see that the automorphism $\tau$ is of order $3$ since,
applying flexibility, we have $R_{e}^{2}\circ L_{e}^{2}=\text{id}$.
It is also worth noting the stunning analogy with the conjugation
expressed for unital composition algebra in (\ref{eq:coniugazione})
and at the same time the analogy with the one expressed for para-Hurwitz
algebras in (\ref{eq:azione a sinistra}). 

Even more interesting, the order-three automorphism $\tau$ is also
an order-three automorphism over the octonions $\mathbb{O}$. Indeed,
if we consider the base given in (\ref{eq:definizione i ottonioniche})
and set $e=\text{i}_{0}$, we can define $\tau$ as the linear map
given by
\begin{equation}
\begin{array}{cc}
\tau\left(\text{i}_{k}\right) & =\text{i}_{k},k=0,1,3,7\\
\tau\left(\text{i}_{2}\right) & =-\frac{1}{2}\left(\text{i}_{2}-\sqrt{3}\text{i}_{5}\right),\\
\tau\left(\text{i}_{5}\right) & =-\frac{1}{2}\left(\text{i}_{5}+\sqrt{3}\text{i}_{2}\right),\\
\tau\left(\text{i}_{4}\right) & =-\frac{1}{2}\left(\text{i}_{4}-\sqrt{3}\text{i}_{6}\right),\\
\tau\left(\text{i}_{6}\right) & =-\frac{1}{2}\left(\text{i}_{6}+\sqrt{3}\text{i}_{4}\right),
\end{array}\label{eq:Tau(Octonions)}
\end{equation}

Such definition extends to an order-three homomorphism over octonions
$\mathbb{O}$ once we consider $\left\{ \text{i}_{0}=1,\text{i}_{2},...,\text{i}_{7}\right\} $
a basis for this algebra. It is interesting to note that in the octonions
there are two Argan planes, generated by $\left\{ \text{i}_{2},\text{i}_{5}\right\} $
and $\left\{ \text{i}_{4},\text{i}_{6}\right\} $ on which the automorphism
$\tau$ acts as the cubic root of unity $\frac{1}{2}\left(1+\sqrt{3}\text{i}\right)$.
For completeness we give also the action of the inverse $\tau^{-1}$
over such basis, i.e. 
\begin{equation}
\begin{array}{cc}
\tau^{2}\left(\text{i}_{k}\right) & =\text{i}_{k},k=0,1,3,7\\
\tau^{2}\left(\text{i}_{2}\right) & =-\frac{1}{2}\left(\text{i}_{2}+\sqrt{3}\text{i}_{5}\right),\\
\tau^{2}\left(\text{i}_{5}\right) & =-\frac{1}{2}\left(\text{i}_{5}-\sqrt{3}\text{i}_{2}\right),\\
\tau^{2}\left(\text{i}_{4}\right) & =-\frac{1}{2}\left(\text{i}_{4}+\sqrt{3}\text{i}_{6}\right),\\
\tau^{2}\left(\text{i}_{6}\right) & =-\frac{1}{2}\left(\text{i}_{6}-\sqrt{3}\text{i}_{4}\right).
\end{array}\label{eq:Tau(Octonions)-1}
\end{equation}

\subsection{Okubo algebra, octonions and para-octonions}

An important feature of the Okubo algebra $\mathcal{O}$ is its interplay
with the algebra of octonions $\mathbb{O}$. Indeed, octonions and
the Okubo algebra are linked one another in such a way that we can
easily pass from one to the other simply changing the definition of
the bilinear product over the vector space of the algebra. Let us
consider the Kaplansky's trick we introduced earlier and let us define
a new product over the Okubo algebra $\mathcal{O}$ as
\begin{equation}
x\cdot y=\left(e*x\right)*\left(y*e\right),
\end{equation}
where $x,y\in\mathcal{O}$ and $e$ is an idempotent of $\mathcal{O}$.
Given that $e*e=e$ and $n\left(e\right)=1$, the element $e$ acts
as a left and right identity, i.e. 
\begin{align}
x\cdot e & =e*x*e=n\left(e\right)x=x,\\
e\cdot x & =e*x*e=n\left(e\right)x=x.
\end{align}
Moreover, since Okubo algebra is a composition algebra, the same norm
$n$ enjoys the following relation 
\begin{equation}
n\left(x\cdot y\right)=n\left(\left(e*x\right)*\left(y*e\right)\right)=n\left(x\right)n\left(y\right),
\end{equation}
which means that $\left(\mathcal{O},\cdot,n\right)$ is a unital composition
algebra of real dimension $8$. Since it is also a division algebra,
then it must be isomorphic to that of octonions $\mathbb{O}$ as noted
by Okubo himself \cite{Okubo 1978,Okubo 78c}. 

On the other hand, if we consider the order three automorphism of
the octonions in (\ref{eq:Tau(Octonions)}), the Okubo algebra is
then realised as a Petersson algebra from the octonions setting
\begin{align}
x*y & =\tau\left(\overline{x}\right)\cdot\tau^{2}\left(\overline{y}\right).
\end{align}
Note that (\ref{eq:tau}) is formulated assuming the knowledge of
the Okubic product. Reading the same maps as Okubic maps we then have
the notable relation, i.e.

\begin{align}
\overline{x} & =R_{e}^{3}\left(x\right)=\left(\left(x*e\right)*e\right)*e,\\
\tau\left(x\right) & =R_{e}^{4}\left(x\right)=\left(\left(\left(x*e\right)*e\right)*e\right)*e,
\end{align}
so that, in fact, the two maps are linked one another, i.e.

\begin{align}
\tau\left(x\right) & =\overline{x}*e\\
\overline{x} & =\tau\left(e*x\right).
\end{align}
While these maps are intertwined, it's important to highlight their
distinct impacts on the algebra's structure. While $\tau$ is an automorphism
for both Okubo algebra $\mathcal{O}$ and octonions $\mathbb{O}$,
$\overline{x}$ do not respect the algebrical structure of the Okubo
algebra $\mathcal{O}$, since it is not an automorphism nor an anti-automorphism
with respect to the Okubic product, while it is an anti-homomorphism
over octonions $\mathbb{O}$.

The scenario with para-octonions, $p\mathbb{O}$ is more straightforward.
By definition, para-octonions are obtainable from octonions $\mathbb{O}$
through
\begin{equation}
x\bullet y=\overline{x}\cdot\overline{y},
\end{equation}
while, on the other hand, octonions $\mathbb{O}$ are obtainable from
para-octonions $p\mathbb{O}$ through the aid of the para-unit $1\in p\mathbb{O}$,
such that
\begin{align}
x\cdot y & =\left(1\bullet x\right)\bullet\left(y\bullet1\right)\\
 & =\overline{x}\bullet\overline{y}=x\cdot y.
\end{align}
The new algebra $\left(p\mathbb{O},\cdot,n\right)$ is again an eight-dimensional
composition algebra which is also unital and division and thus, for
Hurwitz theorem, isomorphic to that of octonions $\mathbb{O}$. Moreover,
since $\tau\left(\overline{x}\right)=\overline{\tau\left(x\right)}$,
we also have that the Okubic algebra is obtainable from the para-Hurwitz
algebra with the introduction of a Petersson-like product, i.e. 
\begin{equation}
x*y=\tau\left(x\right)\bullet\tau^{2}\left(y\right).
\end{equation}
 We thus have that all algebras are obtainable one from the other
as summarized in Table \ref{tab:Oku-Para-Octo}. 
\begin{table}
\centering{}%
\begin{tabular}{|c|c|c|c|}
\hline 
Algebra & $\left(\mathcal{O},*\right)$ & $\left(p\mathbb{O},\bullet\right)$ & $\left(\mathbb{O},\cdot\right)$\tabularnewline
\hline 
\hline 
$x*y$ & $x*y$ & $\tau\left(x\right)\bullet\tau^{2}\left(y\right)$ & $\tau\left(\overline{x}\right)\cdot\tau^{2}\left(\overline{y}\right)$\tabularnewline
\hline 
$x\bullet y$ & $\tau^{2}\left(x\right)*\tau\left(y\right)$ & $x\bullet y$ & $\overline{x}\cdot\overline{y}$\tabularnewline
\hline 
$x\cdot y$ & $\left(e*x\right)*\left(y*e\right)$ & $\left(\boldsymbol{1}\bullet x\right)\bullet\left(y\bullet\boldsymbol{1}\right)$ & $x\cdot y$\tabularnewline
\hline 
\end{tabular}\caption{\label{tab:Oku-Para-Octo}In this table we see how to obtain the Okubic
product $*,$ the para-octonionic product $\bullet$ and the octonionic
product $\cdot$ from Okubo algebra $\left(\mathcal{O},*\right)$,
para-octonions $\left(p\mathbb{O},\bullet\right)$ and octonions $\left(\mathbb{O},\cdot\right)$
respectively}
\end{table}
 Nonetheless, it's vital to note that while transitioning from one
algebra to another is feasible, these algebras are not isomorphic.
For example, while the octonions $\mathbb{O}$ are alternative and
unital, para-octonions $p\mathbb{O}$ are nor alternative nor unital
but do have a para-unit. In contrast, the Okubo algebra $\mathcal{O}$
is non-alternative and only contains idempotent elements.

\section{\label{sec:Affine-and-projective}Affine and projective planes}

An \emph{incidence plane} $P^{2}$ is given by the triple $\left\{ \mathscr{P},\mathscr{L},\mathscr{R}\right\} $
where $\mathscr{P}$ is the set of points of the plane, $\mathscr{L}$
is the set of lines and $\mathscr{R}$ are the incidence relations
of poins and lines. The plane $P^{2}$ is called an \emph{affine plane}
if it satisfies the axioms of affine geometry, which means that the
relations $\mathscr{R}$ are such that: two points are joined by a
single line; any two non-parallel lines intersect in one point; finally,
for each line and each point there is a unique line which passes through
the point and it is parallel to the line. Instead, for $P^{2}$ to
be projective $\mathscr{R}$ must satisfy the property for which:
\begin{enumerate}
\item Any two distinct points are incident to a unique line.
\item Any two distinct lines are incident with a unique point. 
\item (\emph{not degenerate}) There exist four points such that no three
are incident one another.
\end{enumerate}
From definitions provided earlier, both affine and projective planes
can be defined in very abstract terms. However, in this section, our
focus is on the study of incidence planes defined in a natural way
over algebras: one aimed to generalize the construction of classical
affine and projective planes\cite{Compact Projective}. Our interest
lies in the definition wherein points of the affine plane are characterized
by two coordinates, where lines are linear functions of the coordinates
with respect to the a sum and a product that are those of the algebra
itself. To achieve a projective completion, it becomes necessary to
introduce an additional line at infinity and a few suitable relations.
This foundational approach to constructing affine and projective planes
proves effective -with minor yet meaningful modifications- for all
three 8-dimensional division composition algebras: octonions $\mathbb{O}$
, para-octonions $p\mathbb{O}$ and Okubo algebra $\mathcal{O}$.
Thus, we outline the framework for defining the affine and projective
planes for all three cases. However, in order to avoid repetitions
we will develop in all the details only for the Okubo case $\mathcal{O}$,
highlighting in Subsection \ref{subsec:Octonionic-and-para-octonionic}
the differences and variations needed in the other two cases.

\subsection{The Okubic affine plane and its completion}

A direct consequence of Proposition \ref{prop:SolutionLinearEq a*x=00003Db}
is the feasibility of defining affine geometry over the Okubo algebra
or, in other words, an Okubic affine plane $\mathscr{A}_{2}\left(\mathcal{O}\right)$
that satisfies all axioms of affine geometry. 
\begin{figure}
\centering{}\includegraphics[scale=0.38]{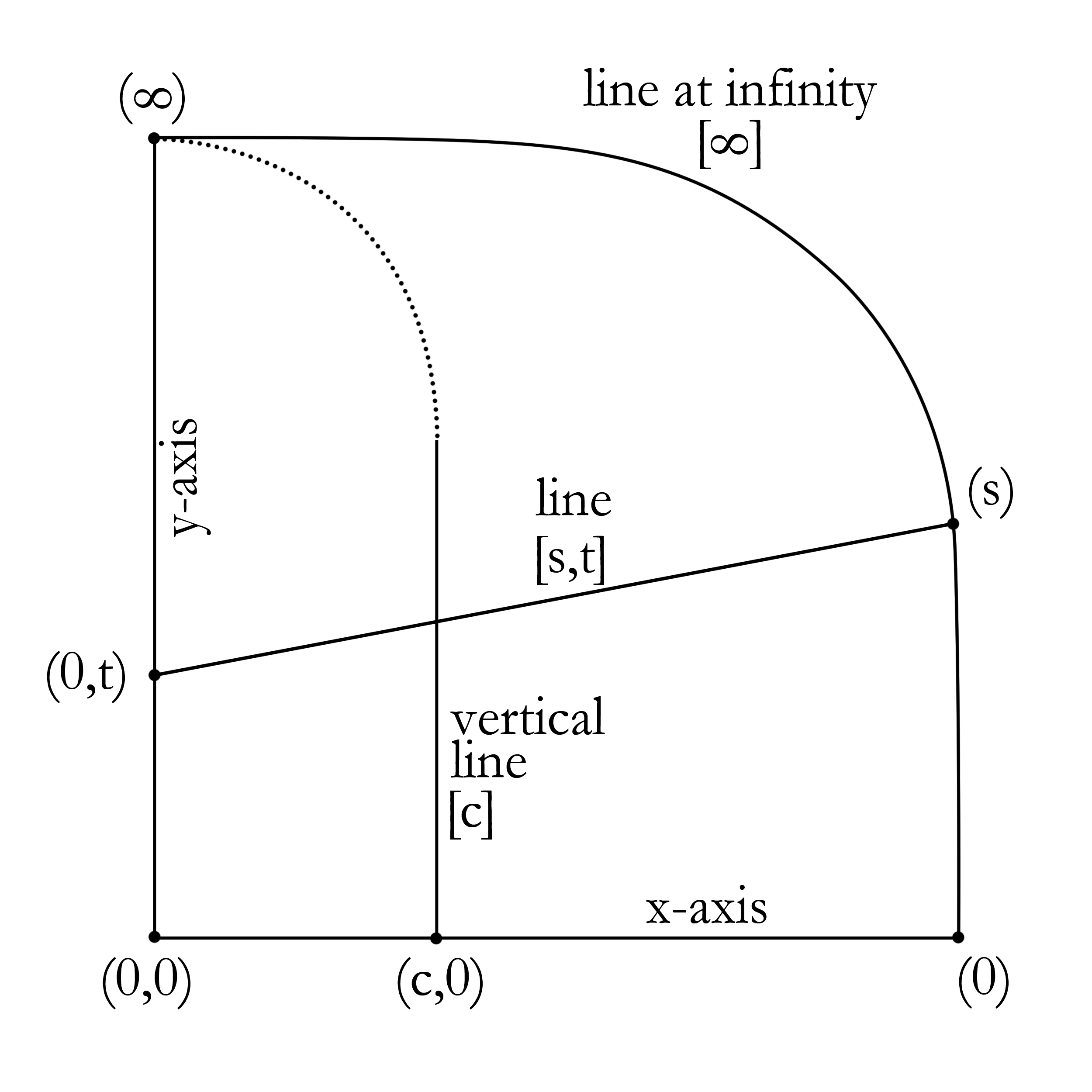}\caption{\label{fig:The affine plane}Representation of the completion of the
affine plane: $\left(0,0\right)$ represents the origin, $\left(0\right)$
the point at the infinity on the $x$-axis, $\left(s\right)$ is the
point at infinity of the line $\left[s,t\right]$ of slope $s$ while
$\left(\infty\right)$ is the point at the infinity on the $y$-axis
and of vertical lines $\left[c\right]$.}
\end{figure}
 Indeed, we identify a \emph{point} on the Okubic affine plane $\mathscr{A}_{2}\left(\mathcal{O}\right)$
by two coordinates $\left(x,y\right)$ with $x,y\in\mathcal{O}$,
while a \emph{line} of slope $s\in\mathcal{O}$ and offset $t\in\mathcal{O}$
is the set $\left[s,t\right]=\left\{ \left(x,s*x+t\right):x\in\mathcal{O}\right\} $.
Thus, the $x$ axis is represented by the line $\left[0,0\right]$.
On the other hand, \emph{vertical lines} are identified by $\left[c\right]$
which stands for the set $\left\{ c\right\} \times\mathcal{O}$. Here
$c\in\mathcal{O}$ represents the intersection with the $x$ axis,
thus $\left[0\right]$ denotes the $y$ axis. Finally, as for the
incidence rules we say that a point $\left(x,y\right)\in\mathscr{A}_{2}\left(\mathcal{O}\right)$
is \emph{incident }to a line $\left[s,t\right]\subset\mathscr{A}_{2}\left(\mathcal{O}\right)$
if belongs to such line, i.e. $\left(x,y\right)\in\left[s,t\right]$.

We now proceed to prove that the set of points, lines and incidence
relations previously defined forms an affine plane. 
\begin{thm}
The Okubic affine plane $\mathscr{A}_{2}\left(\mathcal{O}\right)$
with the previous incidence rules satisfies the axioms of affine geometry.
\end{thm}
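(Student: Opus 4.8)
The plan is to verify the three affine axioms directly, reducing each to the unique solvability of a single linear equation over $\mathcal{O}$, which is exactly the content of Proposition \ref{prop:SolutionLinearEq a*x=00003Db}. The only structural facts needed are that $\mathcal{O}$ is a division algebra and that equations $a*x=b$ and $x*a=b$ with $a\neq 0$ have unique solutions; flexibility and associativity are never invoked, so the argument is insensitive to the weakness of the Okubo product.

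First, for the \emph{join} axiom I would take two distinct points $(x_1,y_1)$ and $(x_2,y_2)$ and split into cases according to whether $x_1=x_2$. If $x_1\neq x_2$, a common non-vertical line $[s,t]$ must satisfy $s*x_1+t=y_1$ and $s*x_2+t=y_2$; subtracting eliminates $t$ and leaves $s*(x_1-x_2)=y_1-y_2$, an equation of shape $x*a=b$ with $a=x_1-x_2\neq 0$, hence a unique slope $s$ by Proposition \ref{prop:SolutionLinearEq a*x=00003Db}, after which $t=y_1-s*x_1$ is forced; no vertical line can contain both points since their first coordinates differ. If instead $x_1=x_2$ (so $y_1\neq y_2$), no non-vertical line works and the vertical line $[x_1]$ is the unique join.

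Next, for the \emph{intersection} axiom I would declare two non-vertical lines parallel when they share a slope, declare all vertical lines mutually parallel, and declare every non-vertical line non-parallel to every vertical one; then I check the two non-parallel configurations. For $[s,t]$ and $[s',t']$ with $s\neq s'$, an intersection point has first coordinate solving $(s-s')*x=t'-t$, an equation of shape $a*x=b$ with $a=s-s'\neq 0$, giving a unique $x$ and then $y=s*x+t$. For a vertical line $[c]$ against a non-vertical $[s,t]$ the unique common point is simply $(c,\,s*c+t)$. Finally, the \emph{parallel} axiom is the easiest: given $(x_0,y_0)$ and $[s,t]$ the only candidate parallel is $[s,t']$ with $t'=y_0-s*x_0$ uniquely determined, while for a vertical line the parallel through $(x_0,y_0)$ is the vertical $[x_0]$.

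The only real subtlety, and the place I would be careful, is tracking the handedness of the Okubo product. Because $*$ is neither commutative nor associative, the join axiom produces an equation $s*(x_1-x_2)=y_1-y_2$ in which the unknown multiplies on the \emph{left} (the shape $x*a=b$ of Proposition \ref{prop:SolutionLinearEq a*x=00003Db}, solved by $s=n(a)^{-1}a*b$), whereas the intersection axiom produces $(s-s')*x=t'-t$ in which the unknown multiplies on the \emph{right} (the shape $a*x=b$, solved by $x=n(a)^{-1}b*a$). Conflating the two sides would be the natural error; once they are tracked correctly, existence and uniqueness in every case follow at once from that proposition, and the vertical-line cases are purely combinatorial.
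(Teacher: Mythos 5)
Your proof is correct and follows essentially the same route as the paper's: both reduce each affine axiom to the unique solvability of the one-sided linear equations $s*\left(x_{1}-x_{2}\right)=y_{1}-y_{2}$ and $\left(s_{1}-s_{2}\right)*x=t_{2}-t_{1}$ via the proposition on equations $a*x=b$ and $x*a=b$, arriving at the same explicit formulas for the slope of the join and the first coordinate of the intersection. Your handling of the vertical-line cases, the uniqueness claims, and the left/right placement of the unknown is somewhat more explicit than the paper's terse write-up, but the underlying argument is identical.
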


\begin{proof}
First of all, we can straightforwardly see that given any two points
$\left(x_{1},y_{1}\right)$ and $\left(x_{2},y_{2}\right)$ there
is a unique line joining them. If $x_{1}=x_{2}=x$, the line is simply
$\left[x\right]$. On the other hand, if $x_{1}\neq x_{2}$, the line
is given by $\left[s,y_{1}-s*x_{1}\right]$, where $s$ is determined
by the linear equation
\[
s*\left(x_{1}-x_{2}\right)=\left(y_{1}-y_{2}\right),
\]
which has a unique solution given by
\begin{equation}
s=\frac{\left(x_{1}-x_{2}\right)*\left(y_{1}-y_{2}\right)}{n\left(x_{1}-x_{2}\right)}.\label{eq:SlopeOku}
\end{equation}
Similarly, for two lines $\left[s_{1},t_{1}\right]$ and $\left[s_{2},t_{2}\right]$
with distinct slopes $s_{1}\neq s_{2}$, a unique point of intersection
exists, i.e. $\left\{ \left(x,s_{1}*x+t_{1}\right)\right\} $ where
$x$ is 

\begin{equation}
x=\frac{\left(t_{2}-t_{1}\right)*\left(s_{1}-s_{2}\right)}{n\left(s_{1}-s_{2}\right)}.\label{eq:InterOku}
\end{equation}
If two lines have the same slope, they are disjoint. Two such lines
are called \emph{parallel}. Finally, for each line $\left[s,t\right]$
and each point $\left(x,y\right)$ there is a unique line, given by
i.e. $\left[s,y-s*x\right]$, which passes through $\left(x,y\right)$
and that is parallel to $\left[s,t\right]$. 
\end{proof}
The projective completion of the affine plane $\overline{\mathscr{A}_{2}}\left(\mathcal{O}\right)$
is obtained adding a line at infinity $\left[\infty\right]$, i.e.
\[
\left[\infty\right]=\left\{ \left(s\right):s\in\mathcal{O}\cup\left\{ \infty\right\} \right\} ,
\]
where $\left(s\right)$ identifies the end point at infinity of a
line with slope $s\in\mathcal{O}\cup\left\{ \infty\right\} $. Finally,
we define $\left(\infty\right)$ the point at infinity of $\left[\infty\right]$.
We now proceed to verify that the plane $\overline{\mathscr{A}_{2}}\left(\mathcal{O}\right)$
satisfies axioms of projective geometry: every two lines intersect
in a unique point; for every two points passes a unique line; there
are at least four points that form a non degenrate quadrangle. Indeed,
have the following 
\begin{thm}
The extended Okubic affine plane $\overline{\mathscr{A}_{2}}\left(\mathcal{O}\right)$
is a projective plane.
\end{thm}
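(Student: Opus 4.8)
The plan is to reduce the projective axioms to the affine incidence facts already established together with Proposition \ref{prop:SolutionLinearEq a*x=00003Db}, by first fixing the incidence conventions at infinity and then exhausting the finitely many combinatorial cases. The points of $\overline{\mathscr{A}_{2}}\left(\mathcal{O}\right)$ come in three kinds: affine points $\left(x,y\right)$, finite points at infinity $\left(s\right)$ with $s\in\mathcal{O}$, and the single point $\left(\infty\right)$; the lines likewise come as affine lines $\left[s,t\right]$, vertical lines $\left[c\right]$, and the line at infinity $\left[\infty\right]$. I would declare the incidences so that $\left(s\right)$ lies on $\left[\infty\right]$ and on every affine line $\left[s,t\right]$ of that slope, that $\left(\infty\right)$ lies on $\left[\infty\right]$ and on every vertical line $\left[c\right]$, and that $\left[\infty\right]$ consists precisely of the points $\left(s\right)$ with $s\in\mathcal{O}\cup\left\{ \infty\right\}$. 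In this way the slope becomes the invariant attaching each parallel class of affine lines to a unique point at infinity, while the class of vertical lines is attached to $\left(\infty\right)$.

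For the first axiom (two distinct points determine a unique line) I would argue by cases. Two affine points are handled exactly as in the affine theorem, with the slope given by (\ref{eq:SlopeOku}); a finite point at infinity $\left(s\right)$ together with an affine point $\left(x,y\right)$ is joined by the unique line $\left[s,y-s*x\right]$, while $\left(\infty\right)$ and $\left(x,y\right)$ are joined by the vertical line $\left[x\right]$; and any two distinct points at infinity lie on $\left[\infty\right]$, which is the only line meeting $\left[\infty\right]$ in more than one point. Uniqueness throughout follows since prescribing a slope and requiring passage through a given affine point fixes the offset. For the second axiom (two distinct lines meet in a unique point) I would symmetrically exhaust the cases: two affine lines of distinct slopes meet in the single affine point given by (\ref{eq:InterOku}); two affine lines of equal slope but different offset share no affine point and meet only at their common $\left(s\right)$; an affine line $\left[s,t\right]$ and a vertical line $\left[c\right]$ meet at the affine point $\left(c,s*c+t\right)$; two distinct vertical lines meet only at $\left(\infty\right)$; and $\left[\infty\right]$ meets $\left[s,t\right]$ at $\left(s\right)$ and $\left[c\right]$ at $\left(\infty\right)$. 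The essential input is Proposition \ref{prop:SolutionLinearEq a*x=00003Db}, which, using that $\mathcal{O}$ is a division algebra, guarantees that the relevant linear equations over $\mathcal{O}$ are uniquely solvable despite the lack of associativity; this is exactly where the algebraic hypotheses do the work.

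Finally, for non-degeneracy I would exhibit four affine points no three of which are collinear, for instance $\left(0,0\right)$, $\left(e,0\right)$, $\left(0,e\right)$ and $\left(e,e\right)$ with $e$ the idempotent, and verify the finitely many triples directly; using $e*e=e$ and $e*0=0$ each of the relevant slope and incidence conditions is seen to fail, so no three are collinear.

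The step I expect to be most delicate is not any single computation but the bookkeeping of the incidences at infinity: one must check that the chosen conventions make every parallel class close up to exactly one point and that $\left[\infty\right]$ behaves like an ordinary line, so that existence and uniqueness hold in the mixed finite/infinite cases with no hidden coincidences. In particular I would take care that the asymmetry noted for $\mathcal{O}$ — the absence of a unit and of a coordinate-swapping collineation — does not interfere here, since the axioms only require unique join and unique meet, both of which follow from the unique solvability in Proposition \ref{prop:SolutionLinearEq a*x=00003Db} rather than from any unital or symmetric structure.
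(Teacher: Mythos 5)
Your proposal is correct and follows essentially the same route as the paper's proof: an exhaustive case analysis of joins and meets, with the mixed finite/infinite cases settled by the slope conventions and the unique solvability of linear equations (Proposition \ref{prop:SolutionLinearEq a*x=00003Db}), plus an explicit non-degenerate quadrangle. The only difference is your choice of quadrangle, the four affine points $\left(0,0\right),\left(e,0\right),\left(0,e\right),\left(e,e\right)$, whereas the paper uses $\left\{ \left(0,0\right),\left(e,e\right),\left(0\right),\left(\infty\right)\right\}$ with two points at infinity; both verifications go through using $e*e=e$ and the division property.
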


\begin{proof}
First we need to show that for every two points of the extended plane
it still passes a unique line. This is straightforward since if the
points are of the affine plane the line was already determined; if
are both of them at infinity, i.e. $\left(s\right)$ and $\left(s'\right)$,
then such line is $\left[\infty\right]$; finally, if one is on the
affine plane $\left(x,y\right)$ and the other is at infinity $\left(s\right)$,
the line that joins them is $\left[s,y-s*x\right].$ On the other
hand, if two lines are not parallel the interstection was already
determined; if they are parallel lines, such as $\left[s,t_{1}\right]$
and $\left[s,t_{2}\right]$, they now intersect in the point $\left(s\right)$;
finally,while two vertical lines intersect in $\left(\infty\right)$.
The only thing that is left is to verify that it exists a non-degenerate
quadrangle where no three points are collinear which in this case
can be found easily, e.g. the quadrangle $\diamondsuit=\left\{ \left(0,0\right),\left(e,e\right),\left(0\right),\left(\infty\right)\right\} \subset\overline{\mathscr{A}_{2}}\left(\mathcal{O}\right)$
is such that no three elements are incident to the same line. Indeed,
the lines that join those points are $\left[0,0\right],\left[0\right],\left[\infty\right],\left[0,e\right],\left[e,0\right]$
and $\left[e\right]$ and none of those contains three elements of
$\diamondsuit$ .
\end{proof}
\begin{rem}
As in the standard projective plane over a field, we would like to
point out to the reader the existence of a fundamental triangle also
in the extended Okubic affine plane. More precisely, the entirety
of the affine plane is encompassed by a triangle given by three special
points: the \emph{origin} $\left(0,0\right)$; the \emph{0-point at
infinity}, i.e. the point $\left(0\right)$ obtained prolonging the
$x$ axis to infinity; finally, the \emph{$\infty$-point at infinity},
i.e. the point $\left(\infty\right)$ obtained prolonging the $y$
axis to infinity. We designate $\triangle$ the set made by those
three points, i.e. $\triangle=\left\{ \left(0,0\right),\left(0\right),\left(\infty\right)\right\} .$
\end{rem}

\subsection{\label{sec:The-Okubic-projective}The Okubic projective plane}

We will now define directly the projective plane $\mathcal{O}P^{2}$
and subsequently illustrate its correspondence with the completion
of the affine plane $\overline{\mathscr{A}_{2}}\left(\mathcal{O}\right)$.
Historically, numerous tricks were used for defining projective planes
over non-associative algebras such as octonions. Here we will use
a variation of the one proposed by H. Salzmann \cite{Compact Projective}
which is based on what he calls ``Veronese coordinates''. Let $V$
be the 27-dimensional real vector space $V\cong\mathbb{\mathcal{O}}^{3}\times\mathbb{R}^{3}$
, with elements of the form 
\[
\left(x_{\nu};\lambda_{\nu}\right)_{\nu}=\left(x_{1},x_{2},x_{3};\lambda_{1},\lambda_{2},\lambda_{3}\right),
\]
where $x_{\nu}\in\mathcal{O}$, $\lambda_{\nu}\in\mathbb{R}$ and
$\nu=1,2,3$. We then define the Veronese vectors to be those $w\in V$
that satisfy the following \emph{Veronese conditions,}

\begin{align}
\lambda_{1}x_{1} & =x_{2}*x_{3},\,\,\lambda_{2}x_{2}=x_{3}*x_{1},\,\,\lambda_{3}x_{3}=x_{1}*x_{2},\label{eq:Okubo Ver-1}\\
n\left(x_{1}\right) & =\lambda_{2}\lambda_{3},\,n\left(x_{2}\right)=\lambda_{3}\lambda_{1},n\left(x_{3}\right)=\lambda_{1}\lambda_{2}.\label{eq:Okubo Ver-2}
\end{align}
It is straightforward to see that if $w=\left(x_{\nu};\lambda_{\nu}\right)_{\nu}$
is a Veronese vector then also $\mu w=\mu\left(x_{\nu};\lambda_{\nu}\right)_{\nu}$
is Veronese for every $\mu\in\mathbb{R}$. The set of Veronese vectors
is therefore a subset that we will call $H$ and for ever $w$ that
is Veronese we indicate as $\mathbb{R}w\subset H$ the class of real
multiples of $w$. The \emph{Okubic projective plane} $\mathcal{O}P^{2}$
is then the geometry having the 1-dimensional subspaces $\mathbb{R}w$
as \emph{points}, i.e. 
\begin{equation}
\mathscr{P}_{\mathcal{O}}=\left\{ \mathbb{R}w:w\in H\smallsetminus\left\{ 0\right\} \right\} .\label{eq:Projective plane}
\end{equation}
The set of \emph{lines $\mathscr{L}_{\mathcal{O}}$ }is formed by
subspaces $\ell_{w}$ in the projective plane $\mathcal{O}P^{2}$
that are orthogonal to a Veronese vector $w\in H$, i.e. 
\begin{equation}
\ell_{w}=w^{\bot}=\left\{ z\in H:\beta\left(z,w\right)=0\right\} ,\label{eq:Projective line}
\end{equation}
where the bilinear form $\beta$ is the extension to $V$ of the polarisation
of the Okubic norm. More specifically, defining $\left\langle x,y\right\rangle =n\left(x+y\right)-n\left(x\right)-n\left(y\right)$,
for any two Okubic elements $x,y\in\mathbb{\mathcal{O}}$, the bilinear
form $\beta$ is given by

\begin{equation}
\beta\left(v,w\right)=\stackrel[\nu=1]{3}{\sum}\left(\left\langle x_{\nu},y_{\nu}\right\rangle +\lambda_{\nu}\eta_{\nu}\right),\label{eq:beta bilinear}
\end{equation}
where $v=\left(x_{\nu};\lambda_{\nu}\right)_{\nu}$ and $w=\left(y_{\nu};\eta_{\nu}\right)_{\nu}$
are Veronese vectors in $H\subset V$.

Finally, the incidence relations are again given by inclusion $\subseteq$,
i.e. we say that a point $\mathbb{R}w\in\mathcal{O}P^{2}$ is \emph{incident}
to the line $\ell_{v}$ iff $\mathbb{R}w\in v^{\bot}$, i.e. $\beta\left(w,v\right)=0$.
\begin{rem}
Since all real multiple of a Veronese vectors $v=\left(x_{\nu};\lambda_{\nu}\right)_{\nu}$
identify the same point on the projective line, we will usually take
as representative of the class the one such that $\lambda_{1}+\lambda_{2}+\lambda_{3}=1$,
such that an alternative definition of the set of points in (\ref{eq:Projective plane})
could be 
\begin{equation}
\mathcal{O}P^{2}=\left\{ \left(x_{\nu};\lambda_{\nu}\right)_{\nu}\in H\smallsetminus\left\{ 0\right\} ,\lambda_{1}+\lambda_{2}+\lambda_{3}=1\right\} .
\end{equation}
\end{rem}

\medskip{}

\begin{rem}
It is also worth noting how the norm $n$, defined over the symmetric
composition algebra $\mathbb{\mathcal{O}}$, is intertwined with the
geometry of the plane. This relationship becomes evident when considering
the quadratic form of the bilinear symmetric form $\beta$, i.e.

\begin{equation}
q\left(v\right)\coloneqq\frac{1}{2}\beta\left(v,v\right)=n\left(x_{1}\right)+n\left(x_{2}\right)+n\left(x_{3}\right)+\frac{1}{2}\left(\lambda_{1}^{2}+\lambda_{2}^{2}+\lambda_{3}^{2}\right),\label{eq:Norm element projective}
\end{equation}
where $v=\left(x_{\nu};\lambda_{\nu}\right)_{\nu}$.
\end{rem}

In the next section we will show that the triple $\mathcal{O}P^{2}=\left\{ \mathscr{P}_{\mathcal{O}},\mathscr{L}_{\mathcal{O}},\subseteq\right\} $
is indeed a projective plane and, even more, is equivalent to the
completion of the affine plane $\overline{\mathscr{A}_{2}}\left(\mathcal{O}\right)$.

\subsection{\label{sec:Correspondence-between-affine}Correspondence between
affine and projective plane}

In establishing a one-to-one correspondence between the completion
of the affine plane $\overline{\mathscr{A}_{2}}\left(\mathcal{O}\right)$
and the projective plane $\mathcal{O}P^{2}$, we must ensure that
such a correspondence maintains the incidence relations. Specifically,
a point incident to a line in $\overline{\mathscr{A}_{2}}\left(\mathcal{O}\right)$
should map to a point in $\mathcal{O}P^{2}$ that is incident to the
image of the original line. As demonstrated in \cite{Corr-OkuboSpin},
the map which sends points and lines from $\overline{\mathscr{A}_{2}}\left(\mathcal{O}\right)$
to $\mathcal{O}P^{2}$ defined by

\begin{equation}
\begin{array}{ccc}
\left(x,y\right) & \mapsto & \mathbb{R}\left(x,y,x*y;n\left(y\right),n\left(x\right),1\right),\\
\left(x\right) & \mapsto & \mathbb{R}\left(0,0,x;n\left(x\right),1,0\right),\\
\left(\infty\right) & \mapsto & \mathbb{R}\left(0,0,0;1,0,0\right),\\
\left[s,t\right] & \mapsto & \left(t*s,-t,-s;1,n\left(s\right),n\left(t\right)\right)^{\bot},\\
\left[c\right] & \mapsto & \left(-c,0,0;0,1,n\left(c\right)\right)^{\bot},\\
\left[\infty\right] & \mapsto & \left(0,0,0;0,0,1\right)^{\bot},
\end{array}\label{eq:correspondence}
\end{equation}
is indeed well-defined and keeps the incidence relations. 
\begin{lem}
The aforementioned correspondence (\ref{eq:correspondence}) is well-defined
and is a one-to-one correspondence between points and lines of the
affine plane $\overline{\mathscr{A}_{2}}\left(\mathcal{O}\right)$
and points and lines of the projective plane $\mathcal{O}P^{2}$
\end{lem}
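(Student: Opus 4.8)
The plan is to establish the two assertions of the lemma separately: first that (\ref{eq:correspondence}) actually lands in $\mathcal{O}P^{2}$ (well-definedness), and then that it is bijective on points and, separately, on lines. The single algebraic engine behind every computation is the symmetric-composition identity (\ref{eq:symm-comp}), namely $x*(y*x)=(x*y)*x=n(x)y$, together with the multiplicativity $n(x*y)=n(x)n(y)$ and the positive-definiteness of the Okubic norm (signature $(8,0)$, see (\ref{eq:Norm-Ok})).

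For well-definedness I would substitute each of the three point rows into the six Veronese conditions (\ref{eq:Okubo Ver-1})--(\ref{eq:Okubo Ver-2}). For the affine point $(x,y)\mapsto(x,y,x*y;n(y),n(x),1)$ the three norm conditions are immediate, $n(x*y)=n(x)n(y)$ being the composition property, the condition $\lambda_{3}x_{3}=x_{1}*x_{2}$ is the tautology $x*y=x*y$, while $\lambda_{2}x_{2}=x_{3}*x_{1}$ and $\lambda_{1}x_{1}=x_{2}*x_{3}$ read $(x*y)*x=n(x)y$ and $y*(x*y)=n(y)x$, both of which are exactly (\ref{eq:symm-comp}) (with $a=x,b=y$ and $a=y,b=x$ respectively). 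The rows for $(x)$ and $(\infty)$ collapse to trivial identities because most entries vanish. For the three line rows one checks identically that the vector defining each orthogonal complement is itself Veronese; e.g. for $[s,t]\mapsto(t*s,-t,-s;1,n(s),n(t))^{\bot}$ the only nontrivial conditions are $s*(t*s)=n(s)t$ and $(t*s)*t=n(t)s$, again instances of (\ref{eq:symm-comp}).

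The heart of the argument is a normal-form classification of Veronese vectors, which delivers surjectivity and injectivity at once. Given $0\neq w=(x_{\nu};\lambda_{\nu})_{\nu}\in H$, the conditions (\ref{eq:Okubo Ver-2}) express each product $\lambda_{\mu}\lambda_{\nu}$ as a value of the norm, hence as a nonnegative real; positive-definiteness then forces the nonzero $\lambda_{\nu}$ to share a common sign, and they cannot all vanish (else all $n(x_{\nu})=0$, so $w=0$). After scaling by a suitable real one may assume $\lambda_{\nu}\geq 0$ and normalise $\lambda_{1}+\lambda_{2}+\lambda_{3}=1$. I would then split on the last nonzero index: if $\lambda_{3}\neq 0$, scaling to $\lambda_{3}=1$ gives $x_{3}=x_{1}*x_{2}$, $\lambda_{2}=n(x_{1})$, $\lambda_{1}=n(x_{2})$, which is precisely the image of the affine point $(x_{1},x_{2})$; if $\lambda_{3}=0$ but $\lambda_{2}\neq 0$, then $n(x_{1})=n(x_{2})=0$ force $x_{1}=x_{2}=0$ and $w$ is the image of $(x_{3})$; if $\lambda_{2}=\lambda_{3}=0$, all $x_{\nu}$ vanish and $w$ is the image of $(\infty)$. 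This shows the point-map is onto $\mathscr{P}_{\mathcal{O}}$, and injectivity is immediate since the pattern of vanishing $\lambda$'s reads off the type while the surviving coordinates read off $(x,y)$ or $s$.

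For lines I would use that $\beta$ in (\ref{eq:beta bilinear}) is positive-definite, hence nondegenerate, so $w\mapsto w^{\bot}$ is a bijection from projective classes of Veronese vectors onto $\mathscr{L}_{\mathcal{O}}$; running the same analysis but splitting on the \emph{first} nonzero index produces the dual normal forms $(t*s,-t,-s;1,n(s),n(t))$, $(-c,0,0;0,1,n(c))$ and $(0,0,0;0,0,1)$, matching the three line rows of (\ref{eq:correspondence}) and giving the bijection onto lines. The routine substitutions of the well-definedness step are mechanical; I expect the genuine obstacle to be the classification lemma, specifically the sign-and-vanishing analysis of the $\lambda_{\nu}$ forced by positive-definiteness, since this is where the division and composition structure of $\mathcal{O}$ is really used rather than formal identity manipulation. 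Although the statement only asserts well-definedness and bijectivity, the same identities let one verify that incidence is preserved by evaluating $\beta$ (\ref{eq:beta bilinear}) on each (point-type, line-type) pair; I would organise that check by type, so as to keep the index and sign bookkeeping under control.
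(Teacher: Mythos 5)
Your strategy coincides with the paper's own proof: check the Veronese conditions (\ref{eq:Okubo Ver-1})--(\ref{eq:Okubo Ver-2}) for each row of (\ref{eq:correspondence}) using only the composition property and the symmetric-composition identity (\ref{eq:symm-comp}), then get bijectivity by rescaling a general Veronese vector into a normal form determined by which $\lambda_{\nu}$ vanish. Your execution is in fact more complete than the paper's, which works out explicitly only the affine-point case $\lambda_{3}\neq0$ and the line $\left[\infty\right]$, delegating everything else to ``the check proceeds the same way''; your sign analysis of the $\lambda_{\nu}$ is correct though slightly redundant, since after scaling the distinguished $\lambda_{\nu}$ to $1$ the remaining ones equal norms and are automatically nonnegative.

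There is, however, one deduction that does not hold as stated: your claim that positive-definiteness of $\beta$ makes $\mathbb{R}w\mapsto w^{\bot}$ a bijection from classes of Veronese vectors onto $\mathscr{L}_{\mathcal{O}}$. Nondegeneracy of $\beta$ gives injectivity of $\mathbb{R}w\mapsto w^{\bot}$ as a map into hyperplanes of $V$, but a line of $\mathcal{O}P^{2}$ is by definition the subset $\ell_{w}=w^{\bot}\cap H$, and $H$ is not a linear subspace, so two distinct hyperplanes could a priori cut $H$ in the same set. Concretely, for $w=\left(0,0,0;1,0,0\right)$ (the defining vector of the image of $\left[0,0\right]$) the set $\ell_{w}$ consists of the classes $\mathbb{R}\left(x,0,0;0,n\left(x\right),1\right)$, $x\in\mathcal{O}$, together with $\mathbb{R}\left(0,0,0;0,1,0\right)$, and its linear span is only $10$-dimensional inside the $26$-dimensional hyperplane $w^{\bot}$; hence the hyperplane, and with it $\mathbb{R}w$, cannot be recovered from $\ell_{w}$ by linear algebra alone. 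The injectivity you need is nevertheless true, but it uses the Veronese conditions once more: a Veronese vector $v=\left(v_{1},v_{2},v_{3};\eta_{1},\eta_{2},\eta_{3}\right)$ orthogonal to all of $\ell_{w}$ above must have $v_{1}=0$ and $\eta_{2}=\eta_{3}=0$, and then the conditions $n\left(v_{2}\right)=\eta_{3}\eta_{1}=0$ and $n\left(v_{3}\right)=\eta_{1}\eta_{2}=0$ force $v_{2}=v_{3}=0$, i.e. $\mathbb{R}v=\mathbb{R}w$. Alternatively, once incidence preservation is established (the next lemma in the paper, whose proof is independent of this one), line-injectivity follows because two distinct affine lines share at most one point, while each $\ell_{w}$ contains the images of the whole $8$-parameter family of points of its affine line. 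Either repair is short, but one of them is needed; the appeal to nondegeneracy alone is not a proof.
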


\begin{proof}
In fact, this is just a trivial check that relies on the Veronese
conditions and $\mathcal{O}$ being a symmetric composition algebra
for which just (\ref{eq:comp(Def)}) and (\ref{eq:x*y*x=00003Dn(x)y})
has to be used. For example, let $\left(x,y\right)$ be a point of
the affine plane, then the vector $\left(x,y,x*y;n\left(y\right),n\left(x\right),1\right)$
is a Veronese vector since a direct check of (\ref{eq:Okubo Ver-1})
and (\ref{eq:Okubo Ver-2}) yields to
\begin{equation}
\begin{array}{ccccc}
n\left(y\right)x=y*\left(x*y\right), &  & n\left(x\right)y=\left(x*y\right)*x,\,\, &  & x*y=x*y,\\
n\left(x\right)=n\left(x\right), &  & n\left(y\right)=n\left(y\right), &  & n\left(x*y\right)=n\left(x\right)n\left(y\right),
\end{array}
\end{equation}
 that are either identically true or obtainable from the fact that
Okubo algebra is a composition algebra, i.e. $n\left(x*y\right)=n\left(x\right)n\left(y\right)$,
or from the symmetric composition identity, i.e. $n\left(x\right)y=\left(x*y\right)*x.$
On the other hand, for any Veronese vector $v=\left(x_{\nu};\lambda_{\nu}\right)_{\nu}$
with $\lambda_{3}\neq0$ we have that subspace $\mathbb{R}v$ is the
same of 
\begin{equation}
\mathbb{R}v=\mathbb{R}\left(x,y,x*y;n\left(y\right),n\left(x\right),1\right),
\end{equation}
where $x=\lambda_{3}^{-1}x_{1}$ and $y=\lambda_{3}^{-1}x_{2}$ which
is again a Veronese vector. The check with a generic line proceeds
on the same way, but it might be interesting to explicitly check that
\begin{equation}
\left[\infty\right]\longrightarrow\left(0,0,0;0,0,1\right)^{\bot},
\end{equation}
is indeed a line. First of all, we need to find the Veronese vectors
orthogonal to $\left(0,0,0;0,0,1\right)$. These are vectors with
$\lambda_{3}=0$ and, therefore, with $n\left(x_{1}\right)=n\left(x_{2}\right)=0$,
and thus with $x_{1}=x_{2}=0$. Then, elements orthogonal to $\left(0,0,0;0,0,1\right)$
might take only two forms depending on $x_{3}$ being $0$ or $x_{3}\neq0$,
i.e.
\begin{equation}
\left(0,0,0;0,0,1\right)^{\bot}=\left\{ \mathbb{R}\left(0,0,x;n\left(x\right),1,0\right)\right\} \cup\left\{ \mathbb{R}\left(0,0,0;1,0,0\right)\right\} ,
\end{equation}
where $x\in\mathcal{O}$. In fact, these are in a trivial way elements
of an Okubic line $\mathcal{O}\cup\left\{ \infty\right\} $as one-point
compactification of Okubo algebra.
\end{proof}
While that (\ref{eq:correspondence}) is well defined and is a one-to-one
correspondence between $\overline{\mathscr{A}_{2}}\left(\mathcal{O}\right)$
and $\mathcal{O}P^{2}$ was a trivial check, the proof that incidence
rules are preserved by (\ref{eq:correspondence}) is a little bit
more involved and for this reason we will show it in a complete form
with the following
\begin{lem}
The correspondence in (\ref{eq:correspondence}) preserves the incidence
relations between $\overline{\mathscr{A}_{2}}\left(\mathcal{O}\right)$
and $\mathcal{O}P^{2}$.
\end{lem}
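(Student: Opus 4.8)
The plan is to reduce the statement to a finite collection of direct computations with the bilinear form $\beta$ of (\ref{eq:beta bilinear}). Recall that incidence in $\mathcal{O}P^{2}$ means $\beta\left(w,v\right)=0$, whereas incidence in $\overline{\mathscr{A}_{2}}\left(\mathcal{O}\right)$ is set-membership of a point in a line. Thus, for every incident pair (point, line) of the extended affine plane I would substitute the two vectors assigned by (\ref{eq:correspondence}) into $\beta$ and check that it vanishes; conversely, for each non-incident pair I would check that $\beta$ does not vanish. The only algebraic tools needed are the multiplicativity $n\left(a*b\right)=n\left(a\right)n\left(b\right)$, the polarisation identities $n\left(a+b\right)=n\left(a\right)+n\left(b\right)+\left\langle a,b\right\rangle $ and $\left\langle a,a\right\rangle =2n\left(a\right)$, the symmetric-composition identity $\left(a*b\right)*a=a*\left(b*a\right)=n\left(a\right)b$ of (\ref{eq:symm-comp}), and above all the associativity of the norm (\ref{eq:symmetric polar-1}), from which the cyclic symmetry $\left\langle a*b,c\right\rangle =\left\langle b*c,a\right\rangle =\left\langle c*a,b\right\rangle $ follows.

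Next I would organise the verification by the type of incident pair. There are essentially six incidence patterns to treat: an affine point on an affine line, $\left(x,y\right)\in\left[s,t\right]$ (equivalent to $y=s*x+t$); an affine point on a vertical line, $\left(x,y\right)\in\left[c\right]$ (equivalent to $x=c$); the slope point $\left(s\right)$ on the line $\left[s,t\right]$; the vertical point $\left(\infty\right)$ on each vertical line $\left[c\right]$ and on $\left[\infty\right]$; and every $\left(s\right)$ on $\left[\infty\right]$. All but the first are essentially immediate: because the relevant vectors have many vanishing entries, $\beta$ collapses to a combination such as $-\left\langle c,c\right\rangle +2n\left(c\right)$ or $-\left\langle s,s\right\rangle +2n\left(s\right)$, which is zero by $\left\langle a,a\right\rangle =2n\left(a\right)$.

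The genuine content, and the step I expect to be the main obstacle, is the affine-on-affine case. Writing $P=\left(x,y,x*y;n\left(y\right),n\left(x\right),1\right)$ and $L=\left(t*s,-t,-s;1,n\left(s\right),n\left(t\right)\right)$ and substituting the incidence hypothesis $y=s*x+t$, one obtains
\[
\beta\left(P,L\right)=\left\langle x,t*s\right\rangle -\left\langle y,t\right\rangle -\left\langle x*y,s\right\rangle +n\left(y\right)+n\left(x\right)n\left(s\right)+n\left(t\right).
\]
The task is to show that all terms cancel after expanding $n\left(y\right)=n\left(s*x+t\right)$, $\left\langle y,t\right\rangle $ and $\left\langle x*y,s\right\rangle $ using $y=s*x+t$. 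The two decisive simplifications are $x*\left(s*x\right)=n\left(x\right)s$, a direct instance of (\ref{eq:symm-comp}), which turns $\left\langle x*\left(s*x\right),s\right\rangle $ into $2n\left(x\right)n\left(s\right)$, and the cyclic identity $\left\langle x*t,s\right\rangle =\left\langle x,t*s\right\rangle $, which makes the two mixed $\left\langle x,t*s\right\rangle $ terms cancel. After these substitutions the $\left\langle s*x,t\right\rangle $ terms, the $n\left(t\right)$ terms and the $n\left(x\right)n\left(s\right)$ terms each sum to zero, giving $\beta\left(P,L\right)=0$.

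Finally, to upgrade ``incidence preserved'' to a genuine isomorphism of incidence structures I would record the converse: for each non-incident pair the same substitutions leave a nonzero residue. The representative case is the slope one, where $\left(s\right)$ tested against $\left[s',t\right]$ yields $\beta=-\left\langle s,s'\right\rangle +n\left(s\right)+n\left(s'\right)=n\left(s-s'\right)$, which vanishes exactly when $s=s'$ because the Okubic norm has signature $\left(8,0\right)$; the remaining non-incidences reduce to a constant $1$. Combined with the bijection established in the previous lemma, this shows that the correspondence (\ref{eq:correspondence}) is an isomorphism of incidence planes.
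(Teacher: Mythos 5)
Your forward-direction computation is correct and complete: expanding $\beta$ at an incident pair with $y=s*x+t$, the cancellations you describe (via $x*(s*x)=n\left(x\right)s$ from (\ref{eq:symm-comp}) and the cyclic identity $\left\langle x*t,s\right\rangle =\left\langle x,t*s\right\rangle $ from (\ref{eq:symmetric polar-1})) do go through, and your treatment of the degenerate incidences agrees with direct computation. The toolkit is the same as the paper's; only the organisation differs. The paper never substitutes the incidence hypothesis: it rewrites (\ref{eq:eqretta}) unconditionally, using $\left\langle t*s,x\right\rangle -\left\langle s,x*y\right\rangle =n\left(s*x+t-y\right)-n\left(s*x\right)-n\left(t-y\right)$ together with $n\left(s*x\right)=n\left(s\right)n\left(x\right)$, to arrive at the identity $\beta=n\left(s*x+t-y\right)$ valid for arbitrary $x,y,s,t\in\mathcal{O}$; anisotropy of the norm then gives vanishing of $\beta$ if and only if $y=s*x+t$ in one stroke.

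That ``if and only if'' is where your proposal has a genuine gap. The converse is not optional here: the subsequent theorem needs the incidence structure of $\mathcal{O}P^{2}$ to coincide exactly with the transported one, and this cannot be deduced abstractly from bijectivity plus forward preservation, since at this stage $\mathcal{O}P^{2}$ is not yet known to be a projective plane (a target with extra incidences would satisfy both conditions). Your claim that, beyond the slope case, ``the remaining non-incidences reduce to a constant $1$'' is false precisely in the central case: for an affine point $\left(x,y\right)$ and an affine line $\left[s,t\right]$ with $y\neq s*x+t$, the value of $\beta$ is $n\left(s*x+t-y\right)$, which is nonzero but not constant (and against a vertical line $\left[c\right]$ with $x\neq c$ it is $n\left(x-c\right)$). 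Your substitution method produces nothing in this case, because there is no identity to substitute. The repair is small and lands exactly on the paper's argument: write $y=s*x+t+d$ and track the terms linear in $d$; they cancel by the same cyclic identity, $\left\langle x*d,s\right\rangle =\left\langle d*s,x\right\rangle =\left\langle s*x,d\right\rangle $, leaving $\beta=n\left(d\right)=n\left(s*x+t-y\right)$, which is the paper's unconditional identity and settles both directions at once.
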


\begin{proof}
We need to show that the image of a point $\left(x,y\right)$ incident
to the line $\left[s,t\right]$ is mapped by (\ref{eq:correspondence})
into a point of the projective plane, i.e. $\mathbb{R}\left(x,y,x*y;n\left(y\right),n\left(x\right),1\right)$,
that is incident to the image of $\left[s,t\right]$, i.e. is incident
to $\left(t*s,-t,-s;1,n\left(s\right),n\left(t\right)\right)^{\bot}$.
By definition of incidence on the projective plane and of (\ref{eq:Projective line}),
the image of $\left(x,y\right)$ is incident to the image of $\left[s,t\right]$
if and only if the following condition is satisfied 
\begin{equation}
\left\langle t*s,x\right\rangle -\left\langle t,y\right\rangle -\left\langle s,x*y\right\rangle +n\left(y\right)+n\left(s\right)n\left(x\right)+n\left(t\right)=0.\label{eq:eqretta}
\end{equation}
Noting that 
\begin{equation}
\left\langle s*x,t-y\right\rangle =n\left(s*x+t-y\right)-n\left(s*x\right)-n\left(t-y\right),
\end{equation}
and since (\ref{eq:associativityNorm}), we then have
\begin{equation}
\begin{array}{cc}
\left\langle s*x,t-y\right\rangle  & =\left\langle s*x,t\right\rangle -\left\langle s*x,y\right\rangle \\
 & =\left\langle t,s*x\right\rangle -\left\langle s,x*y\right\rangle \\
 & =\left\langle t*s,x\right\rangle -\left\langle s,x*y\right\rangle ,
\end{array}
\end{equation}
and, therefore, 
\begin{equation}
\left\langle t*s,x\right\rangle -\left\langle s,x*y\right\rangle =n\left(s*x+t-y\right)-n\left(s*x\right)-n\left(t-y\right).
\end{equation}
 Inserting the latter into (\ref{eq:eqretta}) and noting that $n\left(s\right)n\left(x\right)=n\left(s*x\right)$,
then (\ref{eq:eqretta}) is equivalent to 
\begin{align}
n\left(s*x+t-y\right) & =0.
\end{align}
Since Okubo algebra is a division composition algebra, and the only
element of zero norm is zero, then (\ref{eq:eqretta}) is satisfied
iff $s*x+t-y=0$, that is $\left(x,y\right)\in\left[s,t\right]$.
The cases for the incidence of $\left(s\right)$ and $\left(\infty\right)$
with $\left[\infty\right]$ can be proved in the same way.
\end{proof}
Once is shown that the map (\ref{eq:correspondence}) gives a one
to one correspondence that keeps incidence relation we thus have the
following
\begin{thm}
\label{cor:The-Okubic-projective}The Okubic plane given by the triple
$\mathcal{O}P^{2}=\left\{ \mathscr{P}_{\mathcal{O}},\mathscr{L}_{\mathcal{O}},\subseteq\right\} $
is a projective plane and is isomorphic to the completion of the affine
plane $\overline{\mathscr{A}_{2}}\left(\mathcal{O}\right)$.
\end{thm}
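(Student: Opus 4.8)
The plan is to deduce the statement entirely from the two preceding Lemmas together with the Theorem asserting that the completed affine plane $\overline{\mathscr{A}_{2}}\left(\mathcal{O}\right)$ is a projective plane, rather than verifying the projective axioms afresh against the Veronese conditions. The first Lemma already supplies a bijection $\Phi$ sending the points and lines of $\overline{\mathscr{A}_{2}}\left(\mathcal{O}\right)$ to the points and lines of $\mathcal{O}P^{2}$, while the second Lemma shows that $\Phi$ carries incident pairs to incident pairs. First I would observe that the argument of the second Lemma in fact yields a biconditional: its chain of equalities reduces projective incidence to the single condition $s*x+t-y=0$, which is precisely affine incidence, so a pair is incident in $\overline{\mathscr{A}_{2}}\left(\mathcal{O}\right)$ \emph{if and only if} its image is incident in $\mathcal{O}P^{2}$. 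Hence $\Phi$ is an isomorphism of incidence geometries and not merely an incidence-preserving map.

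Next I would transport the three projective axioms across $\Phi$. Because these axioms are phrased solely in the language of incidence, and because $\Phi$ is a bijection on both points and lines that respects incidence in both directions, each axiom holds in $\mathcal{O}P^{2}$ exactly when it holds in $\overline{\mathscr{A}_{2}}\left(\mathcal{O}\right)$. Concretely: given two distinct points of $\mathcal{O}P^{2}$, their $\Phi$-preimages are joined by a unique line of $\overline{\mathscr{A}_{2}}\left(\mathcal{O}\right)$, whose image is then the unique line through the two given points, the uniqueness transferring because a second such line would pull back to a second joining line downstairs, contradicting the affine result. The dual statement for the intersection of two lines follows identically, and the non-degenerate quadrangle $\diamondsuit$ exhibited in the affine plane maps under $\Phi$ to four points of $\mathcal{O}P^{2}$ no three of which are collinear.

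Finally, assembling these observations gives that $\mathcal{O}P^{2}=\left\{ \mathscr{P}_{\mathcal{O}},\mathscr{L}_{\mathcal{O}},\subseteq\right\} $ satisfies the axioms of projective geometry, and the map $\Phi$ is by construction the sought isomorphism with $\overline{\mathscr{A}_{2}}\left(\mathcal{O}\right)$. I do not expect a genuine obstacle here: once the two Lemmas are in place, the content is the soft principle that being a projective plane is an invariant of incidence-geometry isomorphism. The only point deserving care is the upgrade of the second Lemma from a one-directional preservation to the full biconditional, since the uniqueness clauses of the first two axioms rely on non-incident pairs being sent to non-incident pairs; this is immediate from the division-algebra step (only $0$ has zero norm) that forces the reduction to $s*x+t-y=0$.
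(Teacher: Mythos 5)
Your proposal is correct and follows essentially the same route as the paper: the paper states this theorem as an immediate consequence of the two preceding lemmas (the bijectivity of the correspondence and its preservation of incidence) together with the earlier theorem that $\overline{\mathscr{A}_{2}}\left(\mathcal{O}\right)$ is a projective plane, offering no further argument. Your explicit upgrade of the second lemma to a biconditional is a worthwhile clarification — the paper's lemma statement says only ``preserves,'' but its proof indeed ends with an ``iff'' via the division-algebra step — and your transport of the axioms merely spells out what the paper leaves implicit.
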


\subsection{\label{subsec:Octonionic-and-para-octonionic}Octonionic and para-octonionic
planes}

Previous definitions pertaining to the Okubic plane can be generalized
to the para-octonionic case and, with minor variations, to the octonionic
case as referenced in \cite{corr Notes Octo,Compact Projective}.
Indeed, in the context of the paraoctonionic case, there is no need
to alter the definitions formally, provided we replace the Okubonic
product $*$ with the para-octonionic product $\bullet$. In a manner
analogous to the Okubic case, a point of the para-octonionic affine
plane $\mathscr{A}_{2}\left(p\mathbb{O}\right)$ is given by a pair
of elements $\left(x,y\right)$ with $x$,$y\in\left\{ p\mathbb{O}\right\} $,
while a \emph{line} of slope $s\in p\mathbb{O}$ and offset $t\in p\mathbb{O}$
is the set $\left[s,t\right]=\left\{ \left(x,s\bullet x+t\right):x\in p\mathbb{O}\right\} $
and, of course, we say that a point $\left(x,y\right)\in\mathscr{A}_{2}\left(p\mathbb{O}\right)$
is \emph{incident }to a line $\left[s,t\right]\subset\mathscr{A}_{2}\left(p\mathbb{O}\right)$
if belongs to such line, i.e. $\left(x,y\right)\in\left[s,t\right]$.
The octonionic case $\mathbb{O}$ with the product $\cdot$ follows
the same definitions. 

For the affine plane, distinctions primarily manifest in the octonionic
equations that describe the slope $s$ of the line passing through
two points of the plane and coordinate $x$ of the intersection of
two generic lines as found in (\ref{eq:SlopeOku}) and (\ref{eq:InterOku}).
In the para-octonionic scenario, the expressions remain as
\begin{equation}
s=\frac{\left(x_{1}-x_{2}\right)\bullet\left(y_{1}-y_{2}\right)}{n\left(x_{1}-x_{2}\right)},\,\,\,x=\frac{\left(t_{2}-t_{1}\right)\bullet\left(s_{1}-s_{2}\right)}{n\left(s_{1}-s_{2}\right)}.
\end{equation}
However, the octonionic variant introduces a slight modification due
to the unique properties of octonions as a unital composition algebra.
Given that $x^{-1}=\overline{x}/n\left(x\right)$, the equations transform
to
\[
s=\frac{\left(y_{1}-y_{2}\right)\cdot\overline{\left(x_{1}-x_{2}\right)}}{n\left(x_{1}-x_{2}\right)},\,\,\,x=\frac{\overline{\left(s_{1}-s_{2}\right)}\cdot\left(t_{2}-t_{1}\right)}{n\left(s_{1}-s_{2}\right)}.
\]
Similar modifications are observed in the projective planes' definitions,
as seen in (\ref{eq:Okubo Ver-1}) and (\ref{eq:Okubo Ver-2}). For
the para-octonionic case, given a vector $\left(x_{1},x_{2},x_{3};\lambda_{1},\lambda_{2},\lambda_{3}\right)\in p\mathbb{O}^{3}\times\mathbb{R}^{3}$,
one isolate the subset of Veronese vectors satisfying the following
conditions

\begin{align}
\lambda_{1}x_{1} & =x_{2}\bullet x_{3},\,\,\lambda_{2}x_{2}=x_{3}\bullet x_{1},\,\,\lambda_{3}x_{3}=x_{1}\bullet x_{2},\label{eq:Ver-1-paraOct-1}\\
n\left(x_{1}\right) & =\lambda_{2}\lambda_{3},\,n\left(x_{2}\right)=\lambda_{3}\lambda_{1},n\left(x_{3}\right)=\lambda_{1}\lambda_{2},\label{eq:Ver-2-paraOct-1}
\end{align}
which closely resemble the Okubic conditions. In contrast, the octonionic
variant relies on the Veronese conditions applicable to all Hurwitz
algebras, i.e.,

\begin{align}
\lambda_{1}\overline{x_{1}} & =x_{2}\cdot x_{3},\,\,\lambda_{2}\overline{x_{2}}=x_{3}\cdot x_{1},\,\,\lambda_{3}\overline{x_{3}}=x_{1}\cdot x_{2},\label{eq:Ver-1-Oct}\\
n\left(x_{1}\right) & =\lambda_{2}\lambda_{3},\,n\left(x_{2}\right)=\lambda_{3}\lambda_{1},n\left(x_{3}\right)=\lambda_{1}\lambda_{2}.\label{eq:Ver-2-Oct}
\end{align}

To conclude, these differences in the Veronese conditions correspond
to varied formulations of the one-to-one relationship between the
affine and projective planes. Within this relationship, the para-octonions
retain the formal mapping in (\ref{sec:Correspondence-between-affine}),
and more specifically one still has

\begin{align}
\left(x,y\right) & \mapsto\mathbb{R}\left(x,y,x\bullet y;n\left(y\right),n\left(x\right),1\right),\\
\left[s,t\right] & \mapsto\left(t\bullet s,-t,-s;1,n\left(s\right),n\left(t\right)\right)^{\bot},
\end{align}
while for the octonionic plane one has to modify them as follow
\begin{align}
\left(x,y\right) & \mapsto\mathbb{R}\left(x,y,\overline{y}\cdot x;n\left(y\right),n\left(x\right),1\right),\\
\left[s,t\right] & \mapsto\left(\overline{s}\cdot t,-t,-s;1,n\left(s\right),n\left(t\right)\right)^{\bot}.
\end{align}
Interestingly, as inferred from the above equations, the Veronese
conditions for para-octonions are simpler than those for octonions,
as we do not need to use the conjugation. This leads to an intriguing
observation: defining the Cayley plane appears more intuitive using
para-octonions than octonions.

\section{Collineations on the plane}

In this section we study the collineations of the Okubic affine and
projective plane. We start presenting explicit forms of elations,
more specifically translations and shears, and of the triality collineation
(see below). The direct study of the motion group is important since
it might be an alternative way in proving the isomorphism of the Okubic
plane with the Cayley plane. Indeed, it is well known that any 16-dimensional
compact plane with a collineation group of dimension greater than
40 it is isomorphic to the Cayley plane $\mathbb{O}P^{2}$ (see \cite[Chap. 8]{Compact Projective}).
In fact, this is not needed since we will write an explicit isomorphism
between the Okubic plane $\mathcal{O}P^{2}$, the paraoctonionic plane
$p\mathbb{O}P^{2}$ and the octonionic plane $\mathbb{O}P^{2}$ in
the next section. As result, the collineation groups of the three
planes coincide and is the exceptional Lie group $\text{E}_{6(-26)}$.
Nevertheless, it is noteworthy that a variation in the foundational
algebra defining the plane, despite preserving the overall collineation
group, alters the algebraic description of the collineations. Consequently,
in the Okubic realization of the 16-dimensional Moufang plane, the
reflection $\begin{array}{c}
\left(x,y\right)\longrightarrow\left(y,x\right)\end{array}$, is not a collineation, whereas it is in its octonionic realisation.

\subsection{\label{sec:Collineations}Collineations}

A \emph{collineation} is a bijection $\varphi$ of the set of points
of the plane onto itself, such that lines map to lines. Since the
identity map is a collineation, the inverse $\varphi^{-1}$ and the
composition $\varphi\circ\varphi'$ are collineations if $\varphi,\varphi'$
are both collineations, then the set of collineations is in fact a
group under composition that we will denote as $\text{Aut}\left(\mathcal{O}P^{2}\right)$.
A notable characteristic of collineations is that they keep incidence
relations of both affine and projective planes. Indeed, given two
points $p_{1}$ and $p_{2}$, there is only one line passing through
them and, clearly, the image of such line is the only one that passes
through $p_{1}^{\varphi}$ and $p_{2}^{\varphi}$ (where we used the
classical notation $p^{\varphi}$ and $\ell^{\varphi}$ to indicate
the image of the point $p$ and the line $\ell$ through the collineation
$\varphi$). We thus have the following 
\begin{prop}
Collineations of the affine plane send parallel lines into parallel
lines. 
\end{prop}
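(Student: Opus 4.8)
The plan is to observe that, in the affine plane $\mathscr{A}_{2}\left(\mathcal{O}\right)$, parallelism is a purely incidence-theoretic property: by the affine axioms already verified, two lines with distinct slopes meet in exactly one point, so two distinct lines fail to intersect precisely when they are parallel. Thus \emph{parallel} is synonymous with \emph{sharing no common point}, a condition phrased entirely in terms of the incidence relation. My strategy is therefore to show that a collineation — being a bijection of the point set that carries lines to lines and preserves incidence — can neither create nor destroy common points, from which the claim follows at once.

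First I would fix two parallel lines $\ell_{1},\ell_{2}$ and consider their images $\ell_{1}^{\varphi},\ell_{2}^{\varphi}$, which are again lines because $\varphi$ is a collineation. Arguing by contradiction, suppose $\ell_{1}^{\varphi}$ and $\ell_{2}^{\varphi}$ meet in a point $P$. Since $\varphi$ is a bijection of the point set, $P=Q^{\varphi}$ for a unique point $Q$; and since $\varphi^{-1}$ is itself a collineation with $\varphi^{-1}\!\left(\ell_{i}^{\varphi}\right)=\ell_{i}$ as sets of points, the incidence $P\in\ell_{i}^{\varphi}$ pulls back to $Q\in\ell_{i}$ for $i=1,2$. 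Hence $Q$ is a common point of $\ell_{1}$ and $\ell_{2}$, contradicting their parallelism; therefore $\ell_{1}^{\varphi}$ and $\ell_{2}^{\varphi}$ are disjoint, i.e. parallel. The degenerate case $\ell_{1}=\ell_{2}$ is immediate, as a bijection sends equal sets to equal sets.

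The argument uses no computation with the Okubic product; the only things to check carefully are the equivalence between parallelism and non-intersection — which I would justify by invoking the affine incidence axioms established earlier, since distinct slopes force a unique intersection point — and the fact that $\varphi^{-1}$ transports the image line back to the original line \emph{as a set of points}, which is immediate from $\ell_{i}^{\varphi}=\varphi\left(\ell_{i}\right)$ together with the bijectivity of $\varphi$ on points. I do not expect any substantive obstacle: the real content of the statement is simply that parallelism, unlike metric or coordinate data, survives every collineation because it is expressible in the incidence language alone.
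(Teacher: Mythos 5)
Your proof is correct and takes essentially the same approach as the paper: the paper derives this proposition directly from its preceding remark that a collineation, being a bijection on points carrying lines to lines, preserves incidence relations in both directions, so disjointness --- which by the verified affine axioms characterizes parallelism among distinct lines --- is preserved. Your contradiction argument via $\varphi^{-1}$ simply spells out in full what the paper leaves as a one-line observation.
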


As a consequence of the previous proposition we also have the following
\begin{cor}
Any affine collineation can be extended uniquely as a projective collineation.
\end{cor}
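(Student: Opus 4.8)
The plan is to exploit the preceding Proposition: an affine collineation respects the equivalence relation of parallelism, and the points at infinity of $\overline{\mathscr{A}_2}(\mathcal{O})$ are nothing but the parallel classes of affine lines. Hence the action of the collineation on the added line $[\infty]$ is canonically prescribed, and this prescription will yield both the existence and the uniqueness of the extension.

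First I would recall the relevant structure of the completion. Its points at infinity $(s)$, with $s \in \mathcal{O} \cup \{\infty\}$, are in bijection with the parallel classes of affine lines: the point $(s)$ is the unique common point of the pencil $\{[s,t] : t \in \mathcal{O}\}$, the point $(\infty)$ is the common point of the pencil of vertical lines $\{[c] : c \in \mathcal{O}\}$, and $[\infty]$ is exactly the set of all these points. Let $\varphi$ be an affine collineation. By the preceding Proposition, $\varphi$ carries each parallel class into a parallel class, and since $\varphi^{-1}$ is likewise a collineation, $\varphi$ induces a bijection $\bar\varphi$ of the set of parallel classes onto itself. I then define the extension $\hat\varphi$ to agree with $\varphi$ on affine points and, on a point at infinity $(s)$, to take the value at the point at infinity corresponding to the class $\bar\varphi(C_{(s)})$, where $C_{(s)}$ is the parallel class associated to $(s)$; in particular $\hat\varphi$ fixes $[\infty]$ setwise.

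Next I would verify that $\hat\varphi$ is a projective collineation. Bijectivity on the whole point set is immediate, since $\varphi$ is a bijection of the affine points and $\bar\varphi$ a bijection of the classes, hence of the points of $[\infty]$. To see that lines go to lines, note that a projective line is either $[\infty]$, which is fixed, or the projective closure $\bar\ell = \ell \cup \{(s)\}$ of an affine line $\ell$ together with its point at infinity $(s)$; by construction $\hat\varphi(\bar\ell) = \varphi(\ell) \cup \{\hat\varphi((s))\}$, which is precisely the projective closure of the affine line $\varphi(\ell)$, so incidence is preserved.

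Finally, for uniqueness, suppose $\Phi$ is any projective collineation restricting to $\varphi$ on the affine points. Since $\Phi$ then maps affine points bijectively onto affine points, it preserves $[\infty]$ setwise. For a point at infinity $(s)$, incidence preservation forces $\Phi((s))$ to be incident to $\Phi(\ell)=\varphi(\ell)$ for every line $\ell$ of the pencil through $(s)$; as two distinct lines of the image pencil meet only in their common point at infinity, $\Phi((s))$ must equal $\hat\varphi((s))$, whence $\Phi = \hat\varphi$. I expect the only delicate point to be this uniqueness step—specifically the observation that an extension must preserve the line at infinity—while the check that lines map to lines is essentially the bookkeeping built into the correspondence between pencils and points at infinity.
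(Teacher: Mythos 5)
Your proof is correct and follows essentially the same route as the paper: invoke the preceding proposition that parallelism is preserved, identify points at infinity with parallel classes, and define the extension by sending each $\left(s\right)$ to the point at infinity of the image pencil. You simply spell out the existence check and the uniqueness argument in more detail than the paper's terser version, which asserts the same canonical prescription $\left(s\right)^{\varphi}=\left(s^{\varphi}\right)$.
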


\begin{proof}
Since an affine collineation sends parallel lines $\left[s,t\right]$
into parallel lines $\left[s,t\right]^{\varphi}$, so that in fact
we have that all lines with slope $s$ go in lines with slope $s^{\varphi}$.
Clearly, we can extend the affine collineation to the projective plane
if and only if we set that parallels lines go to the same point at
infinity, i.e. setting $\left(s\right)^{\varphi}=\left(s^{\varphi}\right)$. 
\end{proof}
A set of collineations $\triangle$ is called \emph{transitive} on
a set $M$ if for every $x,y\in M$ it exists a collineation $\varphi$
such that $x^{\varphi}=y$. On the other hand, a set of collineations
$\triangle$ is called \emph{doubly transitive} if for any quadruple
of points $x,y,z,w\in M$, it exists a collineation $\varphi$ such
that $x^{\varphi}=y$ and $z^{\varphi}=w$.

\subsection{Axial collineations }

Given a collineation $\varphi$ we say that $\varphi$ is \emph{axial}
if fixes every point of a line $\ell$. In this case, the line $\ell$
is called an \emph{axis} of $\varphi$. On the other hand we say that
$\varphi$ is \emph{central }if fixes every line passing through a
point $p$, which in this case it is called a \emph{center} of $\varphi$. 

It is known from a general setting of projective geometry that any
collineation of a projective plane is axial if and only if is central
(see \cite[Thm. 4.9]{HP}). Moreover, it is easy to see that an axial
collineation that has two centers or two axis is the identity. Indeed,
let us suppose that $\varphi$ has two center $p$ and $q$, then
any other point $r$ outside the line joining $p$ and $q$ would
be fixed since $r$ is given as intersection of two fixed lines, one
passing through $p$ and the other through $q$. On the other hand,
we could just replicate the argument for the point $r$ with $p$
and determine that the collineation must fix also the line joining
$p$ and $q$.

Given a point $p$ and a line $\ell$, we denote an axial collineation
with center $p$ and line $\ell$ as $\varphi_{\left[p,\ell\right]}$
and the group of such collineations as $\Gamma_{\left[p,\ell\right]}$.
It is then easy to verify what is known as the\emph{ conjugation formula},
i.e. 
\begin{lem}
\emph{\label{lem:(Conjugation-formula-)}(Conjugation formula \cite[Lemma. 4.11]{HP})}
For every collineation $\varphi$ the group $\Gamma_{\left[p^{\varphi},\ell^{\varphi}\right]}$
of collineations with center $p^{\varphi}$ and axis $\ell^{\varphi}$
is just the conjugate of $\Gamma_{\left[p,\ell\right]}$ through $\varphi$
\begin{equation}
\varphi^{-1}\circ\Gamma_{\left[p,\ell\right]}\circ\varphi=\Gamma_{\left[p^{\varphi},\ell^{\varphi}\right]}.
\end{equation}
\end{lem}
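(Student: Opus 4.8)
The plan is to prove the two containments separately and to notice that the whole statement is just the assertion that conjugation by $\varphi$ is a group automorphism of $\text{Aut}\left(\mathcal{O}P^{2}\right)$ carrying the subgroup $\Gamma_{\left[p,\ell\right]}$ onto $\Gamma_{\left[p^{\varphi},\ell^{\varphi}\right]}$, the transformation of center and axis being forced by the fact that a collineation and its inverse preserve incidence. First I would fix the bookkeeping: with the right-action convention $x^{\varphi}$ for the image of $x$ (the same convention under which $\varphi\circ\varphi'$ means ``first $\varphi$, then $\varphi'$''), the element $\varphi^{-1}\circ\psi\circ\varphi$ sends a point $x$ to $\left(\left(x^{\varphi^{-1}}\right)^{\psi}\right)^{\varphi}$. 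I would also record that, since $\varphi$ is a collineation with collineation inverse, it carries the points of $\ell$ bijectively onto those of $\ell^{\varphi}$ and the lines through $p$ bijectively onto the lines through $p^{\varphi}$, and that membership $\psi\in\Gamma_{\left[p,\ell\right]}$ means $\psi$ fixes every point of $\ell$ and, by the axial--central equivalence \cite[Thm. 4.9]{HP}, every line through $p$.

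The core is a direct check on a single conjugate. Fix $\psi\in\Gamma_{\left[p,\ell\right]}$ and set $\chi=\varphi^{-1}\circ\psi\circ\varphi$. For the axis, take any point $y$ on $\ell^{\varphi}$; then $y^{\varphi^{-1}}$ lies on $\ell$, hence is fixed by $\psi$, so $y^{\chi}=\left(\left(y^{\varphi^{-1}}\right)^{\psi}\right)^{\varphi}=\left(y^{\varphi^{-1}}\right)^{\varphi}=y$, and $\chi$ fixes $\ell^{\varphi}$ pointwise. For the center, take any line $m$ through $p^{\varphi}$; then $m^{\varphi^{-1}}$ passes through $p$ and is therefore fixed by $\psi$, whence $m^{\chi}=\left(\left(m^{\varphi^{-1}}\right)^{\psi}\right)^{\varphi}=\left(m^{\varphi^{-1}}\right)^{\varphi}=m$, so $\chi$ fixes every line through $p^{\varphi}$. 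Thus $\chi\in\Gamma_{\left[p^{\varphi},\ell^{\varphi}\right]}$, which gives the inclusion $\varphi^{-1}\circ\Gamma_{\left[p,\ell\right]}\circ\varphi\subseteq\Gamma_{\left[p^{\varphi},\ell^{\varphi}\right]}$. This argument uses nothing special about $\varphi$, so it holds for every collineation.

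For the reverse inclusion I would not repeat the computation but exploit that conjugation by $\varphi$ is a bijection of $\text{Aut}\left(\mathcal{O}P^{2}\right)$ with inverse conjugation by $\varphi^{-1}$. Applying the inclusion just proved to the collineation $\varphi^{-1}$ and to the center--axis pair $\left(p^{\varphi},\ell^{\varphi}\right)$, and using $\left(p^{\varphi}\right)^{\varphi^{-1}}=p$ and $\left(\ell^{\varphi}\right)^{\varphi^{-1}}=\ell$, gives $\varphi\circ\Gamma_{\left[p^{\varphi},\ell^{\varphi}\right]}\circ\varphi^{-1}\subseteq\Gamma_{\left[p,\ell\right]}$; conjugating both sides back by $\varphi$ turns this into $\Gamma_{\left[p^{\varphi},\ell^{\varphi}\right]}\subseteq\varphi^{-1}\circ\Gamma_{\left[p,\ell\right]}\circ\varphi$, the missing containment. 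Combining the two yields the claimed equality.

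I expect the only genuine obstacle to be notational rather than mathematical: one must hold the right-action convention consistently, since with ordinary (right-to-left) function composition the same expression $\varphi^{-1}\circ\psi\circ\varphi$ would instead land in $\Gamma_{\left[p^{\varphi^{-1}},\ell^{\varphi^{-1}}\right]}$, and the stated formula would fail. The geometric content is minimal---only that $\varphi$ and $\varphi^{-1}$ preserve incidence, so that preimages of the axis are on the axis and preimages of a pencil through $p^{\varphi}$ pass through $p$---while everything else is the elementary transport of fixed-point and fixed-line sets along $\varphi$.
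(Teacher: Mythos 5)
Your proof is correct and complete. Note that the paper itself offers no proof of this lemma at all: it introduces it with ``it is then easy to verify'' and cites \cite[Lemma 4.11]{HP}, so there is no in-paper argument to compare against. Your two-step argument is precisely the standard one behind that citation: the direct verification that conjugation transports the pointwise-fixed axis and the fixed pencil of lines (using that $\varphi$ and $\varphi^{-1}$ preserve incidence), followed by the reverse inclusion obtained for free by applying the same inclusion to $\varphi^{-1}$ and the pair $\left(p^{\varphi},\ell^{\varphi}\right)$ rather than redoing the computation. Your remark on conventions is also on point and consistent with the paper: since the paper uses the exponential notation $p^{\varphi}$, $\ell^{\varphi}$ (left-to-right composition, as in Hughes--Piper), the formula reads as stated, whereas under right-to-left composition the conjugate $\varphi^{-1}\circ\Gamma_{\left[p,\ell\right]}\circ\varphi$ would equal $\Gamma_{\left[p^{\varphi^{-1}},\ell^{\varphi^{-1}}\right]}$; flagging this is the one place where a careless reading could produce a wrong-looking statement.
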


Moreover, an axial collineation $\varphi_{\left[p,\ell\right]}$ that
fixes a point $q$ outside $p\cup\ell$ is the identity. Indeed if
$q$ is fixed by $\varphi_{\left[p,\ell\right]}$ then joining the
points of $\ell$ with $q$ we would see that also $q$ is a center. 

Since axial collineations fix only a point called center and a line
called axis, they can be easily divided in two classes: 
\begin{enumerate}
\item those for which the center $p$ is incident to the axis $\ell$ and
that are called \emph{elations}; 
\item those collineations for which the center $p$ is not incident to the
axis $\ell$ and that are called \emph{homologies}.
\end{enumerate}
Finally, a last theorem it is worth reviewing, since it is a standard
argument that we will use. 
\begin{lem}
\emph{(see \cite[sec. 23.9]{Compact Projective})} \label{lem:Transitive}Suppose
that $\triangle$ is a set of collineations of center $p$ and axis
$\ell$, and let $m$ be a line through $p$ with $m\neq\ell$. If
$\triangle$ is transitive on the set of points of $m$ that are not
incident with the center or the axis, i.e., $m\smallsetminus\left\{ p,m\wedge\ell\right\} $
then $\triangle$ is the group of all collineations with center $p$
and axis $\ell$, i.e. $\Gamma_{\left[p,\ell\right]}$. 
\end{lem}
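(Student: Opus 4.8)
The plan is to prove the two inclusions $\triangle\subseteq\Gamma_{\left[p,\ell\right]}$ and $\Gamma_{\left[p,\ell\right]}\subseteq\triangle$ separately. The first is immediate: by hypothesis every element of $\triangle$ is a collineation with center $p$ and axis $\ell$, so $\triangle\subseteq\Gamma_{\left[p,\ell\right]}$. Since $\Gamma_{\left[p,\ell\right]}$ is closed under composition and inversion it is a group, so all of the content lies in the reverse inclusion. For this I would exploit the rigidity of axial collineations already recorded above, namely that an element of $\Gamma_{\left[p,\ell\right]}$ fixing a single point outside $\left\{ p\right\} \cup\ell$ must be the identity.

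So fix an arbitrary $\psi\in\Gamma_{\left[p,\ell\right]}$ and choose a point $q\in m\smallsetminus\left\{ p,m\wedge\ell\right\} $. First I would check that $\psi\left(q\right)$ again lies in $m\smallsetminus\left\{ p,m\wedge\ell\right\} $: because $m$ is a line through the center $p$ it is fixed as a whole by $\psi$, so $\psi\left(q\right)\in m$; and since $\psi$ is a bijection fixing the center $p$ as well as the axis point $m\wedge\ell\in\ell$, while $q$ differs from both, the image $\psi\left(q\right)$ differs from both as well. By the assumed transitivity of $\triangle$ on $m\smallsetminus\left\{ p,m\wedge\ell\right\} $ there is then a $\varphi\in\triangle$ with $\varphi\left(q\right)=\psi\left(q\right)$.

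Now consider $\varphi^{-1}\circ\psi$. Both $\varphi$ and $\psi$ lie in the group $\Gamma_{\left[p,\ell\right]}$, hence $\varphi^{-1}\circ\psi\in\Gamma_{\left[p,\ell\right]}$, and by construction it fixes $q$. Since $q$ lies on $m$ but is neither $p$ nor the unique intersection $m\wedge\ell$ of $m$ with $\ell$, it lies outside $\left\{ p\right\} \cup\ell$, and the rigidity statement forces $\varphi^{-1}\circ\psi=\text{id}$, i.e. $\psi=\varphi\in\triangle$. As $\psi$ was arbitrary this yields $\Gamma_{\left[p,\ell\right]}\subseteq\triangle$ and hence equality. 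The only genuine subtlety, and the single place where the geometry of the plane really enters, is the verification that $\psi$ maps $m\smallsetminus\left\{ p,m\wedge\ell\right\} $ into itself so that transitivity can be invoked; everything else is a formal consequence of the already-established one-point determination of axial collineations. Note also that $\triangle$ need not be assumed to be a group a priori, as this follows a posteriori once the equality $\triangle=\Gamma_{\left[p,\ell\right]}$ is obtained.
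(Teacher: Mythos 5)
Your proof is correct, and there is nothing in the paper to compare it against: the paper states this lemma as a quoted standard result from \cite[Sec. 23.9]{Compact Projective} and gives no proof of its own. Your argument is precisely the standard one — pull back an arbitrary $\psi\in\Gamma_{\left[p,\ell\right]}$ by an element of $\triangle$ agreeing with it at a single point of $m\smallsetminus\left\{ p,m\wedge\ell\right\} $, then invoke the rigidity fact recorded in the paper just before the lemma (an axial collineation fixing a point outside $\left\{ p\right\} \cup\ell$ is the identity) — and you correctly handle the one delicate step, namely that $\psi$ maps $m\smallsetminus\left\{ p,m\wedge\ell\right\} $ into itself, as well as correctly observing that $\triangle$ need not be assumed to be a group a priori.
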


\subsection{\label{sec:Elations}Elations }

We now focus on a special class of axial collineations called \emph{elations},
i.e. that are those collineations in which the center is incident
to the axis.
\begin{cor}
Collineations of $\overline{\mathscr{A}_{2}}\left(\mathcal{O}\right)$
that have the line at infinity $\left[\infty\right]$ as axis and
center incident with the axis are precisely the translations 
\begin{equation}
\begin{array}{c}
\tau_{a,b}:\left(x,y\right)\longrightarrow\left(x+a,y+b\right),\\
\tau_{a,b}\mid_{\left[\infty\right]}=id.
\end{array}
\end{equation}
\end{cor}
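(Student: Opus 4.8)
The plan is to establish the two inclusions separately. First I would verify that each $\tau_{a,b}$ is an elation with axis $[\infty]$ by computing the images of the two families of affine lines, using only the $\mathbb{R}$-bilinearity of $*$ (in particular the linearity of the left multiplication $L_{s}$). A point $(x,s*x+t)$ of $[s,t]$ is sent to $(x+a,\,s*x+t+b)$; writing $u=x+a$ gives $s*x=s*u-s*a$, so the image is the line $[s,\,t+b-s*a]$, while a vertical line $[c]$ is sent to $[c+a]$. Hence $\tau_{a,b}$ preserves slopes, so its projective extension fixes every point at infinity $(s)$ and $(\infty)$; that is, $[\infty]$ is its axis. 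Since $\tau_{a,b}(x,y)=(x,y)$ forces $a=b=0$, a non-trivial translation has no affine fixed point, so its (necessarily fixed) centre lies on $[\infty]$ and $\tau_{a,b}$ is an \emph{elation}. Its centre is read off from the fixed lines $[s,t]\mapsto[s,t+b-s*a]$: these are fixed exactly when $s*a=b$, which for $a\neq0$ has by Proposition \ref{prop:SolutionLinearEq a*x=00003Db} the unique solution $s_{0}=n(a)^{-1}\,a*b$, giving centre $(s_{0})$; when $a=0$ the fixed lines are the vertical ones and the centre is $(\infty)$.

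For the converse, that these translations exhaust all elations with axis $[\infty]$, I would invoke the transitivity criterion of Lemma \ref{lem:Transitive}, treating each admissible centre in turn. For a finite slope $s_{0}$, the set $\triangle_{s_{0}}=\{\tau_{a,\,s_{0}*a}:a\in\mathcal{O}\}$ is, by the previous paragraph, a subgroup of $\Gamma_{[(s_{0}),[\infty]]}$; on the line $m=[s_{0},0]$ through the centre it acts as $(x,s_{0}*x)\mapsto(x+a,\,s_{0}*(x+a))$, and this is transitive on the affine points $m\smallsetminus\{(s_{0})\}$ because $a=x_{2}-x_{1}$ carries one to another. Lemma \ref{lem:Transitive} then yields $\triangle_{s_{0}}=\Gamma_{[(s_{0}),[\infty]]}$. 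The same argument with $m=[0]$ and $\triangle_{\infty}=\{\tau_{0,b}:b\in\mathcal{O}\}$ disposes of the centre $(\infty)$. Since the centre of any elation with axis $[\infty]$ must be some point $(s_{0})$ or $(\infty)$ of that line, every such elation lies in one of these groups and is therefore a translation.

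The forward direction is a routine bilinear computation, and nowhere is alternativity or a unit needed. The only place where the weak algebraic structure of $\mathcal{O}$ really intervenes is the identification of the centre and the verification of the transitivity hypothesis, both of which rest on the unique solvability of $s*a=b$ (Proposition \ref{prop:SolutionLinearEq a*x=00003Db})---the Okubic substitute for the invertibility one would use in the octonionic case. I expect the main subtlety to be the elation-versus-homology distinction: one must confirm that the centre genuinely lies on $[\infty]$ (no affine fixed points), and one must treat the vertical direction $(\infty)$ on exactly the same footing as the finite slopes $(s_{0})$, rather than tacitly assuming that having axis $[\infty]$ already forces the centre to be at infinity.
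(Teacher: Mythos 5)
Your proof is correct and follows essentially the same route as the paper: a direct computation of line images under $\tau_{a,b}$ to identify the axis $\left[\infty\right]$ and the centre (via the unique solvability of $s*a=b$ from Proposition \ref{prop:SolutionLinearEq a*x=00003Db}), followed by the transitivity criterion of Lemma \ref{lem:Transitive} applied to a line through the centre to show the translations exhaust all such elations. If anything, you are slightly more careful than the paper in treating the centre $\left(\infty\right)$ on the same footing as the finite slopes and in making the elation-versus-homology point explicit, and your formula $s_{0}=n\left(a\right)^{-1}a*b$ corrects the paper's evident typo $n\left(s\right)^{-1}\left(a*b\right)$.
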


\begin{proof}
First of all, we show that this are collineations that have axis $\left[\infty\right]$
and center incident to the axis. Indeed, given a line $\left[s,t\right]$
or $\left[c\right]$, then its image through $\tau_{a,b}$ is another
line given by 
\begin{align}
\left[s,t\right]^{\tau_{a,b}} & =\left[s,t-s*a+b\right],\label{eq:t-s*p+q}\\
\left[c\right]^{\tau_{a,b}} & =\left[c+a\right].
\end{align}
Since this are collineations of the affine plane, they do extend in
a unique way as collineations on the projective plane and since the
slope $\left(s\right)$ is unchanged, then the line at infinity is
the axis of the collineation. Moreover, let us now consider (\ref{eq:t-s*p+q}).
Clearly if $a\neq0$ there is a unique slope $s$, namely $s=n\left(s\right)^{-1}\left(a*b\right)$,
such that $\left[s,t\right]^{\tau_{a,b}}=\left[s,t\right]$ for every
$t\in\mathcal{O}$. But the set $\left\{ \left[s,t\right]:t\in\mathcal{O}\right\} $
is exactly the set of parallel lines that pass through the point $\left(s\right)$,
i.e. $\left(s\right)$ is a center of the collineation and is incident
to the axis $\left[\infty\right]$. The same reasoning can be applied
when $a=0$, since in that case all vertical lines $\left[c\right]$
with $c\in\mathcal{O}$ would be invariant and the center of the collineation
would be $\left(\infty\right)$. 

We now need to demonstrate that all elations with axis $\left[\infty\right]$
and center $\left(p\right)$ are of the form of $\tau_{a,b}$. First
of all since $\left[\infty\right]$ is the axis, which means that
the collineation fixes pointwise the line $\left[\infty\right]$,
then the image of a line of slope $\left(p\right)$ will be a line
of the same slope $\left(p\right)$. Now let be $q_{1},q_{2}$ any
two points in the affine plane incident to a line of slope $p$. Then
it exists a traslation of the form $\tau_{a,b}$ that sends $q_{1}$
in $q_{2}$ . The group of translations $\tau_{a,b}$ is thus transitive
on the line $M$ joining $q_{1}$ and $q_{2}$ which has slope $\left(p\right)$.
This means that the group of translations is that of all collineation
with center $\left(p\right)$ and axis $\left[\infty\right]$ by Lemma
\ref{lem:Transitive}.
\end{proof}
We now focus on elations that have vertical axis $\left[0\right]$
and center in $\left(\infty\right)$ which they too enjoy an easy
and elegant characterization.
\begin{thm}
Collineations of $\overline{\mathscr{A}_{2}}\left(\mathcal{O}\right)$
that have the vertical axis $\left[0\right]$ as axis and center in
$\left(\infty\right)$ are precisely the shears 
\begin{equation}
\begin{array}{cc}
\sigma_{a}: & \left(x,y\right)\longrightarrow\left(x,y+ax\right),\\
 & \left(s\right)\longrightarrow\left(s+a\right),\\
 & \left(\infty\right)\longrightarrow\left(\infty\right).
\end{array}\label{eq:shears}
\end{equation}
\end{thm}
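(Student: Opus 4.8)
The plan is to follow the same two-step pattern used for the translations in the preceding corollary: first verify that each $\sigma_a$ is genuinely a collineation with axis $[0]$ and center $(\infty)$, and then invoke the transitivity criterion of Lemma~\ref{lem:Transitive} to show that these exhaust all such collineations. Throughout I read the map as $(x,y)\mapsto(x,y+a*x)$, i.e. the second coordinate is shifted by the Okubic product $a*x$.

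For the first step I would compute the images of the two families of affine lines directly. A line $[s,t]=\{(x,s*x+t):x\in\mathcal{O}\}$ is sent by $\sigma_a$ to $\{(x,s*x+t+a*x):x\in\mathcal{O}\}$, and since the Okubic product is bilinear we have $s*x+a*x=(s+a)*x$, so that $[s,t]^{\sigma_a}=[s+a,t]$; a vertical line $[c]=\{c\}\times\mathcal{O}$ is sent to $\{(c,y+a*c):y\in\mathcal{O}\}=[c]$ because $y\mapsto y+a*c$ is a bijection of $\mathcal{O}$. Thus $\sigma_a$ maps lines to lines and is an affine collineation, which extends uniquely to $\overline{\mathscr{A}_2}(\mathcal{O})$ by the corollary on unique extension of affine collineations; the induced action sends lines of slope $s$ to lines of slope $s+a$, giving $(s)\mapsto(s+a)$ and $(\infty)\mapsto(\infty)$ as claimed. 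Since $a*0=0$, every affine point $(0,y)$ of $[0]$ is fixed, so $[0]$ is the axis; moreover every line through $(\infty)$, namely each vertical line $[c]$ together with the line at infinity $[\infty]$, is fixed as a set, so $(\infty)$ is a center. As $(\infty)\in[0]$, the map is an elation, and a short check that $\sigma_a\circ\sigma_{a'}=\sigma_{a+a'}$ shows the shears form a group isomorphic to $(\mathcal{O},+)$ contained in $\Gamma_{[(\infty),[0]]}$.

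For the converse I would apply Lemma~\ref{lem:Transitive} with center $p=(\infty)$, axis $\ell=[0]$, and auxiliary line $m=[c]$ for any fixed $c\neq0$. Since $[c]$ and $[0]$ are distinct vertical lines they meet only at $(\infty)$, so $m\wedge\ell=p$ and the relevant set $m\smallsetminus\{p,m\wedge\ell\}$ is exactly the set of affine points $\{(c,y):y\in\mathcal{O}\}$. Given two such points $(c,y_1)$ and $(c,y_2)$, a shear $\sigma_a$ carries the first to the second precisely when $a*c=y_2-y_1$, and because $c\neq0$ Proposition~\ref{prop:SolutionLinearEq a*x=00003Db} guarantees a unique such $a$. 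Hence the shear group is transitive on $m\smallsetminus\{p,m\wedge\ell\}$, and Lemma~\ref{lem:Transitive} then forces $\{\sigma_a\}$ to be the full group $\Gamma_{[(\infty),[0]]}$, which is the assertion.

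I expect the only delicate points to be bookkeeping rather than substance: making sure the line $[s,t]$ really transforms by a pure slope shift $s\mapsto s+a$, which is exactly where left-distributivity of the non-associative product is used and where the name ``shear'' originates, and correctly identifying $m\wedge\ell=(\infty)$ so that the transitivity set is the affine part of a vertical line rather than a collection of points at infinity. No genuine nonassociativity obstruction arises here, since the solvability of the single linear equation $a*c=y_2-y_1$ is already supplied by Proposition~\ref{prop:SolutionLinearEq a*x=00003Db}.
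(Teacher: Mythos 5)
Your proposal is correct and follows essentially the same two-step route as the paper's own proof: first verifying that $\sigma_{a}$ sends $\left[s,t\right]$ to $\left[s+a,t\right]$ and fixes each $\left[c\right]$, identifying $\left[0\right]$ as axis and $\left(\infty\right)$ as center, and then applying Lemma \ref{lem:Transitive} to the affine part of a vertical line $\left[c\right]$, $c\neq0$, where the required shear parameter $a=n\left(c\right)^{-1}c*\left(y_{2}-y_{1}\right)$ is produced exactly as in the paper. The extra details you supply (bilinearity for the slope shift, the group law $\sigma_{a}\circ\sigma_{a'}=\sigma_{a+a'}$, and the explicit appeal to Proposition \ref{prop:SolutionLinearEq a*x=00003Db}) are harmless refinements of the same argument.
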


\begin{proof}
First of all we show that this are collineations that have axis $\left[0\right]$
and center in $\left(\infty\right)$. Indeed, given a line $\left[s,t\right]$
or $\left[c\right]$, then its image through $\sigma_{a}$ is another
line given by 
\begin{align}
\left[s,t\right]^{\sigma_{a}} & =\left[s+a,t\right],\label{eq:t-s*p+q-1}\\
\left[c\right]^{\sigma_{a}} & =\left[c\right],
\end{align}
so that $\sigma_{a}$ are indeed collineations. Since all lines of
the form $\left[c\right]$ are invariant, therefore the point $\left(\infty\right)$
that joins them is the center of all $\sigma_{a}$ . On the other
hand, looking at (\ref{eq:shears}) it is evident that all points
of the for $\left(0,t\right)$ are fixed by all $\sigma_{a}$ and
thus $\left[0\right]$ is the axis. Since $\left(\infty\right)\in\left[0\right]$,
then $\sigma_{a}$ are elations for every $a\in\mathcal{O}$. 

Now we proceed with the same argument of the previous theorem to show
that all the elations with axis $\left[0\right]$ and center $\left(\infty\right)$
are of the previous form. Let $M$ be the vertical line $\left[c\right]$
with $c\neq0$ and let us consider two points $q_{1},q_{2}\in M\smallsetminus\left(\infty\right)$.
Let us suppose $q_{1}=\left(c,y\right)$ and $q_{2}=\left(c,y'\right)$
then the shear $\sigma_{a}$ with $a=n\left(c\right)^{-1}c*\left(y'-y\right),$
sends $q_{1}$ in $q_{2}$. Thus the group of shears is transitive
over $M\smallsetminus\left(\infty\right)$ and thus coincides with
the group of all elations with axis $\left[0\right]$ and center $\left(\infty\right)$,
i.e. i.e. $\Gamma_{\left[\left(\infty\right),\left[0\right]\right]}$.
\end{proof}
Translations and shears occur also in the octonionic realisation of
the 16-dimensional Moufang plane. We now point out a transformation
that is a collineation when formulated in the octonionic realisation,
but is not a collineation on the Okubic projective plane.
\begin{prop}
\label{thm:The-reflection-not Collineation}The reflection of the
coordinates over the Okubic plane given by $\begin{array}{c}
\left(x,y\right)\longrightarrow\left(y,x\right),\end{array}$\textup{\emph{is}} not collineation.
\end{prop}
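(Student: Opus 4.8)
The plan is to exhibit a single line of the Okubic affine plane whose image under the coordinate reflection $\rho:(x,y)\mapsto(y,x)$ fails to be a line; since a collineation must carry lines to lines, this alone settles the statement. Because $\rho$ fixes the origin and interchanges the $x$-axis $[0,0]$ with the $y$-axis $[0]$, the degenerate lines cause no difficulty, so I would test a line of genuinely non-trivial slope through the origin, namely $[e,0]=\left\{(x,e*x):x\in\mathcal{O}\right\}$, where $e$ is the idempotent of (\ref{eq:idemp}).

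Its image is $S=\rho([e,0])=\left\{(e*x,x):x\in\mathcal{O}\right\}$. First I would observe that $S$ consists entirely of affine points and is infinite, since $x\mapsto(e*x,x)$ is injective; in particular $S$ cannot be the affine trace of $[\infty]$. Moreover its first coordinate $e*x$ ranges over all of $\mathcal{O}$ because $L_{e}$ is a bijection (Proposition \ref{prop:SolutionLinearEq a*x=00003Db}), so $S$ cannot be a vertical line $[c]$ either. Were $\rho$ a collineation, $S$ would therefore lie on a line $[s',t']$; evaluating at $x=0$ forces $t'=0$, and membership of every point of $S$ gives the operator identity $s'*(e*x)=x$ for all $x\in\mathcal{O}$, i.e.\ $L_{s'}\circ L_{e}=\mathrm{id}$.

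Now I would invoke the symmetric-composition identity (\ref{eq:symm-comp}) in the form $(e*x)*e=n(e)x=x$, which says $R_{e}\circ L_{e}=\mathrm{id}$, so that $L_{e}$ is invertible with $L_{e}^{-1}=R_{e}$. Cancelling $L_{e}$ on the right in $L_{s'}\circ L_{e}=R_{e}\circ L_{e}$ yields $L_{s'}=R_{e}$. Evaluating at $x=e$ gives $s'*e=e*e=e$, and by Proposition \ref{prop:SolutionLinearEq a*x=00003Db} the unique solution of $z*e=e$ is $z=n(e)^{-1}(e*e)=e$; hence $s'=e$ and consequently $L_{e}=R_{e}$, i.e.\ $e$ is central.

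The contradiction then comes for free from the trivolution: since $\tau=L_{e}^{2}$ and $\tau^{2}=R_{e}^{2}$ by (\ref{eq:tau}) and the displays following it, the equality $L_{e}=R_{e}$ would give $\tau=L_{e}^{2}=R_{e}^{2}=\tau^{2}$, whence $\tau=\mathrm{id}$, contradicting that $\tau$ is a non-trivial order-three automorphism. I expect the only genuinely delicate point to be the bookkeeping that rules out \emph{both} candidate line types (the vertical $[c]$ and the non-vertical $[s',t']$) for $S$; the algebraic heart then reduces, pleasantly, to the single assertion that the idempotent $e$ is not central, which is precisely what the order-three (rather than order-two) nature of $\tau$ encodes, and which marks the sharp departure from the unital octonionic case where $(x,y)\mapsto(y,x)$ \emph{is} a collineation.
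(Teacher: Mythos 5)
Your proof is correct, and it is actually sharper than the paper's own argument at exactly the point where that argument is weakest. The paper works with a \emph{general} line $\left[s,t\right]$: it substitutes the reflected coordinates into the line equation, right-multiplies by $s$ and uses (\ref{eq:x*y*x=00003Dn(x)y}) to reach
\begin{equation*}
n\left(s\right)t'+t*s=x'*s-n\left(s\right)\,s'*x',
\end{equation*}
and then concludes by asserting that the slope $s'$ ``varies with $x'$'', i.e.\ that no fixed $s'$ can make the right-hand side constant in $x'$ --- but this final non-existence claim is asserted rather than proven. Your specialization to the line $\left[e,0\right]$ closes precisely that gap: the hypothesis that the reflected image is a line forces $t'=0$ and the operator identity $L_{s'}\circ L_{e}=\mathrm{id}$; since $R_{e}\circ L_{e}=\mathrm{id}$ by (\ref{eq:symm-comp}) and $L_{e}$ is bijective, this gives $L_{s'}=R_{e}$, then Proposition \ref{prop:SolutionLinearEq a*x=00003Db} pins down $s'=e$, so $L_{e}=R_{e}$, which is impossible because it would yield $\tau=L_{e}^{2}=R_{e}^{2}=\tau^{2}$ and hence $\tau=\mathrm{id}$, contradicting the explicit nontrivial action (\ref{eq:Tau(Octonions)}). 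So the two proofs share the same skeleton --- exhibit a line whose reflected image satisfies no line equation --- but where the paper stops at an assertion about varying slopes, you identify the unique candidate slope and eliminate it, reducing everything to the non-centrality of the idempotent as encoded by the order-three (rather than order-two) character of the trivolution. Your bookkeeping on the degenerate alternatives (vertical lines and the line at infinity) is also complete, so the argument stands on its own.
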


\begin{proof}
Let us consider the image of a line $\left[s,t\right]$ through the
map that sends $\begin{array}{c}
\left(x,y\right)\longrightarrow\left(y,x\right).\end{array}$ Let us suppose that 
\begin{equation}
\left[s,t\right]=\left\{ \left(x,s*x+t\right):x\in\mathbb{\mathcal{O}}\right\} \longrightarrow\text{ \ensuremath{\left[s',t'\right]}=\ensuremath{\left\{  \left(s*x+t,x\right):x\in\mathbb{\mathcal{O}}\right\} } },
\end{equation}
 and let us determine $s'$ and $t'$. Since by definition $\left[s',t'\right]=\left\{ \left(x',s'*x'+t'\right):x'\in\mathbb{\mathcal{O}}\right\} $we
then have that 
\begin{align}
\begin{cases}
x'=s*x+t,\\
x=s'*x'+t',
\end{cases}
\end{align}
 which means
\begin{equation}
x'=s*\left(s'*x'+t'\right)+t,
\end{equation}
and thus for the (\ref{eq:x*y*x=00003Dn(x)y}) after multiplying on
the LHS for $s$, we obtain
\begin{equation}
\left(x'-t\right)*s=n\left(s\right)\left(s'*x'+t'\right),
\end{equation}
and, finally,
\begin{equation}
n\left(s\right)t'+t*s=x'*s-n\left(s\right)s'*x',
\end{equation}
which yield to a slope $s'$ that varies with $x'$ and thus it is
not a line since the slope is not fixed for all $x'$. 
\end{proof}
\begin{rem}
In case of octonions $\mathbb{O}$ reflections over the affine and
projective plane are collineations. In fact, the previous map can
be defined over the octonionic projective plane as the collineation
\begin{equation}
\rho:\begin{cases}
\left(x,y\right)\longrightarrow\left(y,x\right),\\
\left(s\right)\longrightarrow\left(s^{-1}\right),\\
\left(\infty\right)\longrightarrow\left(0\right),\\
\left(0\right)\longrightarrow\left(\infty\right),
\end{cases}
\end{equation}
 with $x,y,s\in\mathbb{O}$ which is an axial collineation of axis
$\left[1,0\right]$ and center $\left(-1\right)$ that sends 
\begin{equation}
\rho:\begin{cases}
\left[s,t\right]\longrightarrow\left[s^{-1},-s^{-1}t\right], & s\neq0\\
\left[0,t\right]\longrightarrow\left[t\right],\\
\left[t\right]\longrightarrow\left[0,t\right],\\
\left[0\right]\longrightarrow\left[\infty\right],\\
\left[\infty\right]\longrightarrow\left[0\right].
\end{cases}
\end{equation}
From an heuristic point of view the previous theorem is clear since,
for this reflection to be a collineation, it would require the existence
of an inverse at the infinity line, i.e. $\left(s\right)\longrightarrow\left(s^{-1}\right)$.
Also, note that once we try to define such collineation reading it
from the octonions from Tab. \ref{tab:Oku-Para-Octo}, i.e. defining
implicitly $s^{-1}=x$ such that read in the octonionic algebra we
would have $x\cdot s=1$, we then have two choices for the implicit
definition of $x$, i.e.
\begin{align}
\left(x*e\right)*\left(e*s\right) & =e,\text{ or }\left(s*e\right)*\left(e*x\right)=e,
\end{align}
that yield to different, even though $\tau$-conjugated, definitions
of $x$ which thus would violate the uniqueness of the extension to
the projective plane of an affine collineation. Another heuristic
reason for the lack of such collineation is that the axis of such
reflection, if it would exists as in the octonionic case $\overline{\mathscr{A}_{2}}\left(\mathbb{O}\right)$
would be the line $\left[1,0\right]$ containing all elements of the
form $\left(x,x\right)$ with $x\in\mathbb{O}$. In our case, it is
easy to verify that points $\left(x,x\right)$ are not all collinear,
e.g. the line joining the point $\left(0,0\right)$ with $\left(x,x\right)$
is given by $\left[n\left(x\right)^{-1}x*x,0\right]$ for every $x\in\mathbb{\mathcal{O}}$. 
\end{rem}

\medskip{}

\begin{rem}
The previous proposition does not mean that the set of collineations
is not transitive over $\mathcal{O}P^{2}$ since for every pair of
points $p_{1}=\left(x,y\right)$ and $p_{2}=\left(x',y'\right)$ we
can find a collineation that sends $\left(x,y\right)^{\varphi}=\left(x',y'\right)$
such as the translation $\tau_{a,b}$ with $a=x'-x$ and $b=y'-y$.
Even more the Okubic projective plane, as a Corollary of Theorem \ref{thm:Isomorphism},
is transitive on quadrangles.
\end{rem}

\subsection{\label{sec:Triality-collineations}Triality collineations }

Throught the use of the Okubic-Veronese coordinates a special set
of collineations can be easily spotted, i.e. the \emph{triality collineation}
\cite{Compact Projective} given by a cyclic permutation of the coordinates
\begin{equation}
\widetilde{t}:\left(x_{1},x_{2},x_{3};\lambda_{1},\lambda_{2},\lambda_{3}\right)\longrightarrow\left(x_{2},x_{3},x_{1};\lambda_{2},\lambda_{3},\lambda_{1}\right).
\end{equation}

\begin{prop}
The triality collineation can be read on the affine plane in the following
way:

\begin{equation}
\widetilde{t}:\begin{cases}
\left(x,y\right) & \longrightarrow\frac{1}{n\left(y\right)}\left(y,x*y\right),\,\,\,y\neq0\\
\left(x\right) & \longrightarrow\frac{1}{n\left(x\right)}\left(0,x\right),x\neq0\\
\left(x,0\right) & \longrightarrow\left(x\right),\,\,\,\\
\left(0\right) & \longrightarrow\left(\infty\right),\\
\left(\infty\right) & \longrightarrow\left(0,0\right).
\end{cases}\label{eq:triality}
\end{equation}
 In particular it induces a collineation $t\colon\mathscr{A}_{2}\left(\mathcal{O}\right)\rightarrow\mathscr{A}_{2}\left(\mathcal{O}\right)$on
the affine plane.
\end{prop}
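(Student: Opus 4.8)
The plan is to use the fact that $\widetilde{t}$ is literally the cyclic shift of the Veronese coordinates, so that everything follows from the cyclic symmetry of the defining data of $\mathcal{O}P^{2}$. First I would observe that the Veronese conditions (\ref{eq:Okubo Ver-1})--(\ref{eq:Okubo Ver-2}) are carried into themselves by the substitution $(x_1,x_2,x_3;\lambda_1,\lambda_2,\lambda_3)\mapsto(x_2,x_3,x_1;\lambda_2,\lambda_3,\lambda_1)$, since that system is invariant under a simultaneous cyclic permutation of the three index blocks; hence $\widetilde{t}$ preserves the cone $H$ of Veronese vectors and permutes the points $\mathbb{R}w$ of $\mathcal{O}P^{2}$. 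Because the bilinear form (\ref{eq:beta bilinear}) is a symmetric sum over the three blocks, one has $\beta(\widetilde{t}v,\widetilde{t}w)=\beta(v,w)$, so $\widetilde{t}$ sends each line $w^{\bot}$ to $(\widetilde{t}w)^{\bot}$ and is therefore a collineation of $\mathcal{O}P^{2}$, hence of $\overline{\mathscr{A}_2}(\mathcal{O})$ through the isomorphism of Theorem \ref{cor:The-Okubic-projective}.

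Next I would read this collineation in affine coordinates by pushing each of the five point types of (\ref{eq:correspondence}) through the cyclic shift and renormalising to the standard representative. A generic affine point gives $(x,y)\mapsto\mathbb{R}(x,y,x*y;n(y),n(x),1)$, whose shift is $\mathbb{R}(y,x*y,x;n(x),1,n(y))$; dividing by $n(y)$ identifies it with the standard Veronese vector of $\tfrac{1}{n(y)}(y,x*y)$, the only nonformal check being that the third entry agrees, i.e. $\tfrac{1}{n(y)^2}\,y*(x*y)=\tfrac{1}{n(y)}x$, which is exactly (\ref{eq:symm-comp}) read as $y*(x*y)=n(y)x$, the norm entries following from $n(x*y)=n(x)n(y)$. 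The infinite point $(x)$ is handled identically: its image $\mathbb{R}(0,x,0;1,0,n(x))$ renormalises by $n(x)$ to $\tfrac{1}{n(x)}(0,x)$. The three remaining branches are purely combinatorial shifts of constant vectors: $(x,0)\mapsto\mathbb{R}(x,0,0;0,n(x),1)$ goes to the representative of $(x)$, while $(0)$ and $(\infty)$ go to $(\infty)$ and $(0,0)$, reproducing (\ref{eq:triality}).

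The step requiring care is the renormalisation: replacing $\mathbb{R}(y,x*y,x;n(x),1,n(y))$ by the representative with final coordinate $1$ means dividing by $n(y)$, which is admissible precisely because $\mathcal{O}$ is a division algebra (Proposition \ref{prop:division}), so $n(y)=0$ forces $y=0$. This is exactly why the description splits into the generic branch $y\neq0$ and the three exceptional branches, and why $\widetilde{t}$ cyclically permutes the vertices of the fundamental triangle $\triangle=\{(0,0),(0),(\infty)\}$. Finally, since $\widetilde{t}$ is a collineation of the completion and (\ref{eq:triality}) exhibits its action on every affine datum, the transformation $t$ obtained by restriction inherits incidence preservation and hence maps affine lines to affine lines; this is the sense in which $\widetilde{t}$ induces the collineation $t$ on the affine plane, with the understanding that it moves the line at infinity and permutes $\triangle$, so that no independent verification on lines is needed.
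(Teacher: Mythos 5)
Your proposal is correct and follows essentially the same route as the paper: both read the cyclic shift of Veronese coordinates in affine terms through the correspondence (\ref{eq:correspondence}), with the key identities being $y*(x*y)=n(y)\,x$ from (\ref{eq:symm-comp}) and $n(x*y)=n(x)n(y)$. Your write-up is in fact slightly more complete than the paper's: you verify explicitly that the cyclic shift preserves the Veronese cone and the bilinear form $\beta$ (a fact the paper asserts without proof when introducing $\widetilde{t}$), you treat all five branches rather than only the generic one, and you note that the renormalisation by $n(y)$ is licensed by the division property (Proposition \ref{prop:division}).
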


\begin{proof}
If $y\neq0$, the image of $t\left(x,y\right)$ by the bijection (\ref{eq:correspondence})
in the projective plane is given by 
\begin{equation}
\frac{1}{n\left(y\right)}\left(y,x*y\right)\longrightarrow\frac{1}{n\left(y\right)}\left(y,x*y,\frac{y*x*y}{n\left(y\right)};\frac{n\left(x*y\right)}{n\left(y\right)},1,n\left(y\right)\right),
\end{equation}
and since $y*x*y=n\left(y\right)x$ and $n\left(x*y\right)=n\left(x\right)*n\left(y\right)$,
then the image of $t\left(x,y\right)$ is in $\mathbb{R}\left(y,x*y,x;n\left(x\right),1,n\left(y\right)\right)$
which is the image of the triality collineation $\widetilde{t}$ of
the projective point $\mathbb{R}\left(x,y,x*y;n\left(y\right),n\left(x\right),1\right)$.
With the same procedure we find the other correspondences. 
\end{proof}
\begin{figure}
\centering{}\includegraphics[scale=0.18]{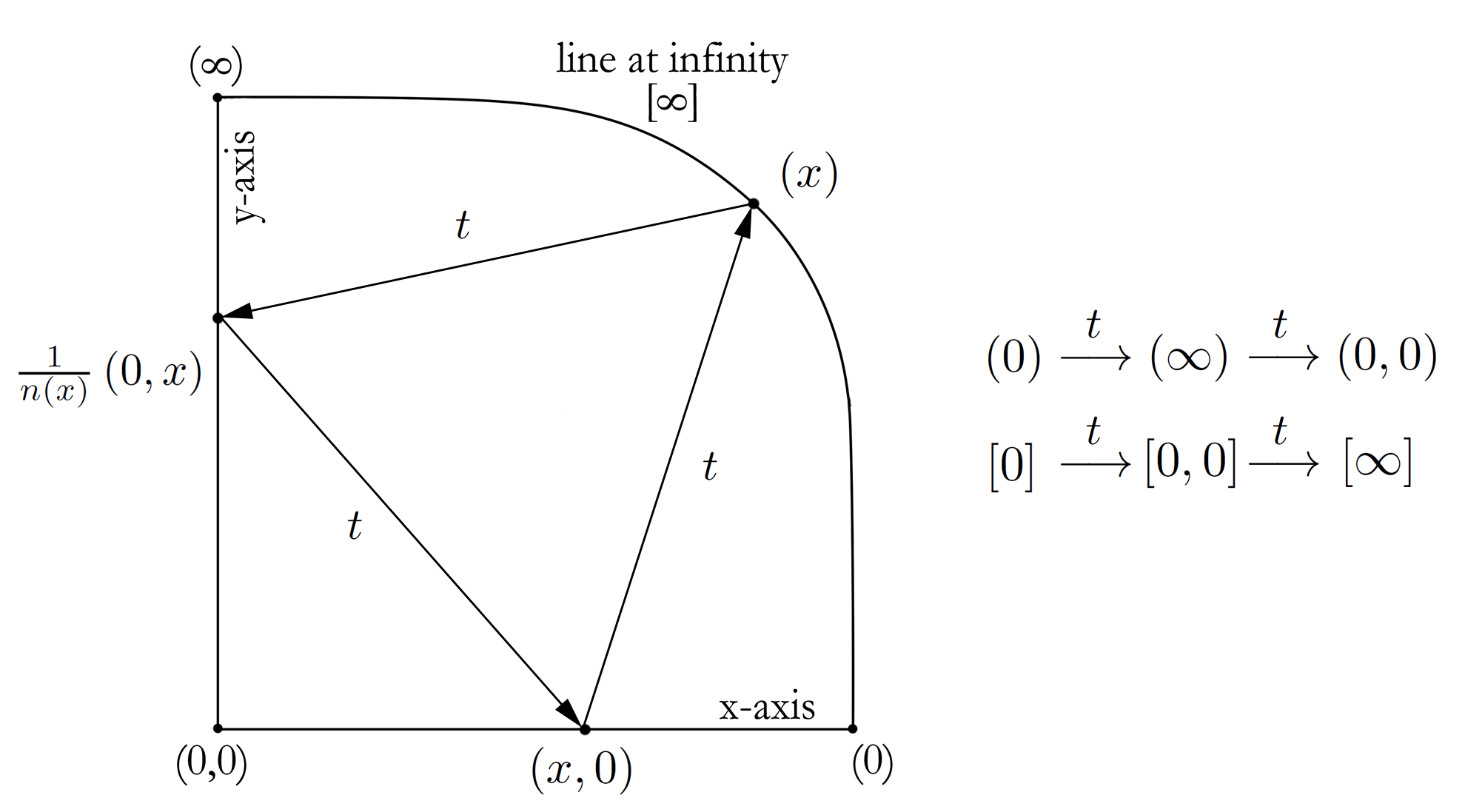}\caption{\label{fig:Action-on-the}Action on the affine plane $\mathscr{A}_{2}\left(\mathcal{O}\right)$
of the triality collineation defined in (\ref{eq:triality}).}
\end{figure}

\begin{rem}
As shown in Fig (\ref{fig:Action-on-the}) the triality collineation
$t$ sends the line at infinity $\left[\infty\right]$ into the line
$\left[0\right]$, while the $y$ axis $\left[0\right]$ is sent into
the $x$ axis $\left[0,0\right]$; finally the $x$ axis $\left[0,0\right]$
is sent into the line at infinity $\left[\infty\right]$. This phenomenon
is the dual, of what happens, in the reverse order, for the three
points $\left(0,0\right)$,$\left(0\right)$ and $\left(\infty\right)$. 
\end{rem}

\section{\label{sec:Three-realizations-of}Three realizations of the 16-dimensional
Moufang plane }

In section \ref{sec:Affine-and-projective}, we have constructed three
projective planes coming from three division algebras, by modifications
of Veronese-type formulas. In the preceding section \ref{sec:Collineations},
we explicitly constructed the primary families of collineations for
the Okubonic plane and highlighted certain distinctive features of
the plane. More specifically, we observed that the points $(0,0)$,
$(x,x)$, and $(y,y)$ are not collinear -as it happens in projective
planes obtained over Hurwitz algebras-, and the transformation from
$(x,y)\longrightarrow(y,x)$ does not constitute a collineation. Despite
these distinctions, in Theorem \ref{thm:Isomorphism}, we construct
two collineations, i.e. (\ref{eq:isomorfPOPO}) and (\ref{eq:isomorfPOPO-1})
that prove the three planes to be projectively isomorphic. Furthermore,
we will show that such collineations are isometries. As a consequence,
there exists a complete equivalence among the Okubonic, octonionic,
and paraoctonic planes that we will extensively discuss in section
\ref{sec:Discussions-and-verifications}.

\subsection{Isomorphism between Okubic and the Cayley plane }

In the context of projective spaces, an isomorphism refers to a bijection
between the points of the spaces that preserves the incidence relations.
We have the following 
\begin{thm}
\label{thm:Isomorphism}The Okubic projective $\mathcal{O}P^{2}$
plane is isomorphic to the octonionic projective plane $\mathbb{O}P^{2}$.
\end{thm}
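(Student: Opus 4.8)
The plan is to exhibit an explicit linear isomorphism of the ambient space $V\cong A^{3}\times\mathbb{R}^{3}$ that carries Okubic Veronese vectors to octonionic ones, commutes with real scaling (so that it descends to a bijection of points), and preserves the bilinear form $\beta$ (so that orthogonality, and hence lines and the incidence relation, are preserved). Rather than passing directly from $\mathcal{O}$ to $\mathbb{O}$, I would factor the map through the paraoctonionic plane $p\mathbb{O}P^{2}$, using the product dictionary of Table \ref{tab:Oku-Para-Octo}; this produces the two collineations announced as (\ref{eq:isomorfPOPO}) and (\ref{eq:isomorfPOPO-1}).

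For the first step I would use the identity $x*y=\tau\left(x\right)\bullet\tau^{2}\left(y\right)$ and define
\[
\Psi\colon\left(x_{1},x_{2},x_{3};\lambda_{\nu}\right)\longmapsto\left(\tau\left(x_{1}\right),\tau^{2}\left(x_{2}\right),x_{3};\lambda_{\nu}\right).
\]
The key observation is that $\tau$ is an automorphism of $p\mathbb{O}$ (since $\tau\left(\overline{x}\right)=\overline{\tau\left(x\right)}$ and the paraoctonionic product is $\overline{x}\cdot\overline{y}$) with $\tau^{3}=\mathrm{id}$. Applying $\tau$, $\tau^{2}$, and $\tau^{0}$ respectively to the three Okubic product conditions (\ref{eq:Okubo Ver-1}) turns them termwise into the three paraoctonionic conditions (\ref{eq:Ver-1-paraOct-1}); the norm conditions (\ref{eq:Okubo Ver-2}) are untouched because $\tau$ is an isometry. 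The same isometry property shows that $\Psi$ preserves $\beta$.

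For the second step I would use $x\bullet y=\overline{x}\cdot\overline{y}$ and simply conjugate each coordinate,
\[
\Phi\colon\left(x_{1},x_{2},x_{3};\lambda_{\nu}\right)\longmapsto\left(\overline{x_{1}},\overline{x_{2}},\overline{x_{3}};\lambda_{\nu}\right),
\]
so that a paraoctonionic condition $\lambda_{1}x_{1}=\overline{x_{2}}\cdot\overline{x_{3}}$ becomes, after writing $y_{i}=\overline{x_{i}}$, exactly the octonionic condition $\lambda_{1}\overline{y_{1}}=y_{2}\cdot y_{3}$ of (\ref{eq:Ver-1-Oct}); conjugation is again a norm-preserving involution, so the remaining conditions and $\beta$ are preserved. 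Composing, the full map $\Phi\circ\Psi$ sends $\left(x_{1},x_{2},x_{3};\lambda_{\nu}\right)$ to $\left(\tau\left(\overline{x_{1}}\right),\tau^{2}\left(\overline{x_{2}}\right),\overline{x_{3}};\lambda_{\nu}\right)$.

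Finally I would record that both $\Psi$ and $\Phi$ are linear bijections (with inverses built from $\tau^{-1}=\tau^{2}$ and conjugation), hence descend to bijections of points that take lines $w^{\bot}$ to lines $\left(\Phi\Psi w\right)^{\bot}$ and preserve the condition $\beta\left(z,w\right)=0$; this is precisely incidence preservation, so the composite is a projective isomorphism $\mathcal{O}P^{2}\cong\mathbb{O}P^{2}$. The only genuinely delicate point is the bookkeeping in the first step: one must choose the powers of $\tau$ applied to the three coordinates so that all three cyclically related product conditions are converted simultaneously, which forces the pattern $\left(\tau,\tau^{2},\mathrm{id}\right)$ and relies essentially on $\tau$ being an algebra automorphism (not merely a linear map) together with $\tau^{3}=\mathrm{id}$ and $\tau\left(\overline{x}\right)=\overline{\tau\left(x\right)}$.
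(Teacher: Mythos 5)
Your proposal is correct, but it follows a genuinely different route from the paper's own proof. The paper works in the affine-completion model: its map (\ref{eq:isomorfPOPO}) acts asymmetrically on affine coordinates, $\left(x,y\right)\mapsto\left(\tau^{2}\left(\overline{x}\right),y\right)$ and $\left[s,t\right]\mapsto\left[\tau\left(\overline{s}\right),t\right]$, and incidence preservation is checked by hand from the single Petersson identity $s*x=\tau\left(\overline{s}\right)\cdot\tau^{2}\left(\overline{x}\right)$, treating finite and infinite points and lines case by case. You instead build a linear bijection of the ambient space $V\cong A^{3}\times\mathbb{R}^{3}$ with the cyclic pattern $\left(\tau,\tau^{2},\mathrm{id}\right)$ followed by slotwise conjugation, show it carries the Veronese variety (\ref{eq:Okubo Ver-1})--(\ref{eq:Okubo Ver-2}) onto (\ref{eq:Ver-1-paraOct-1})--(\ref{eq:Ver-2-paraOct-1}) and then onto (\ref{eq:Ver-1-Oct})--(\ref{eq:Ver-2-Oct}), and let incidence preservation follow formally from linearity plus $\beta$-invariance; your bookkeeping is right, since applying $\tau$, $\tau^{2}$, $\mathrm{id}$ to the three cyclic product conditions is exactly what the dictionary $x*y=\tau\left(x\right)\bullet\tau^{2}\left(y\right)$ together with $\tau^{3}=\mathrm{id}$ forces, and all three conditions do convert correctly. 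Your approach buys three things the paper's does not give immediately: line and incidence preservation are automatic rather than computed; the map is visibly a $\beta$-isometry, so the isometric statement (Theorem \ref{thm:The-map-Isometric}) comes essentially for free instead of as a separate verification; and the factorization through $p\mathbb{O}P^{2}$ yields the Okubo--para-octonion and Okubo--octonion isomorphisms in one construction. The paper's approach, in exchange, is more elementary and produces a map that is simpler to state on affine coordinates. One caveat: your composite is \emph{not} the map (\ref{eq:isomorfPOPO}) of the paper --- on affine points it acts as $\left(x,y\right)\mapsto\left(\tau\left(\overline{x}\right),\tau^{2}\left(\overline{y}\right)\right)$ rather than $\left(x,y\right)\mapsto\left(\tau^{2}\left(\overline{x}\right),y\right)$ --- so you obtain a different, equally valid isomorphism, differing from the paper's by a collineation; your claim that the construction ``produces the two collineations announced as (\ref{eq:isomorfPOPO}) and (\ref{eq:isomorfPOPO-1})'' should be softened accordingly.
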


\begin{proof}
Consider the following bijective map (see sec. \ref{subsec:Conjugation-and-the})
defined over the real Okubo algebra given by

\begin{align}
x\longrightarrow\overline{x} & =\left\langle x,e\right\rangle e-x,\\
x\longrightarrow\tau\left(x\right) & =\left\langle x,e\right\rangle e-x*e,
\end{align}
where $*$ is the Okubic product. Notice that $\tau$, as bijective
maps over the octonions, is an order three automorphism that realizes
the Okubo algebra as a Petersson algebra since 
\begin{align}
x*y & =\tau\left(\overline{x}\right)\cdot\tau^{2}\left(\overline{y}\right),
\end{align}
for every $x,y\in\mathbb{O}$. Let the Okubic projective plane be
$\mathcal{O}P^{2}=\left\{ \mathscr{P}_{\mathbb{\mathcal{O}}},\mathscr{L}_{\mathcal{O}},\mathscr{R}_{\mathcal{O}}\right\} $
and consider the bijective map $\Phi:\mathcal{O}P^{2}\longrightarrow\mathbb{O}P^{2}$
given by
\begin{equation}
\Phi:\begin{cases}
\left(x,y\right) & \longrightarrow\left(\tau^{2}\left(\overline{x}\right),y\right),\\
\left(s\right) & \longrightarrow\left(\tau\left(\overline{s}\right)\right),\\
\left(\infty\right) & \longrightarrow\left(\infty\right),\\
\left[s,t\right] & \longrightarrow\left[\tau\left(\overline{s}\right),t\right],\\
\left[c\right] & \longrightarrow\left[\tau^{2}\left(\overline{c}\right)\right],\\
\left[\infty\right] & \longrightarrow\left[\infty\right].
\end{cases}\label{eq:isomorfPOPO}
\end{equation}
If we call the image incidence plane $\Phi\left(\mathcal{O}P^{2}\right)=\left\{ \mathscr{R}_{\mathcal{O}}^{\Phi},\mathscr{L}_{\mathcal{O}}^{\Phi},\mathscr{R}_{\mathcal{O}}^{\Phi}\right\} $,
then we notice that the octonionic projective plane is given by $\mathbb{O}P^{2}=\left\{ \mathscr{P}_{\mathbb{\mathcal{O}}}^{\Phi},\mathscr{L}_{\mathcal{O}}^{\Phi},\mathscr{R}_{\mathbb{O}}\right\} .$
To show the projective isomorphism and complete the theorem we need
to show that $\mathscr{R}_{\mathbb{O}}\cong\mathscr{R}_{\mathcal{O}}^{\Phi}$,
in other words that every point in the Okubic plane $\left(x,y\right)$
is incident to an Okubic line $\ell$ if and only if the image point
$\left(x,y\right)^{\Phi}$ is incident to the image of the octonion
line $\ell^{\Phi}$, i.e. $\Phi\left(\left(x,y\right)\right)\in\Phi\left(\ell\right)$.
By definition of the Okubic projective plane 

\begin{align}
\left(x,y\right) & \in\left[s,t\right]=\left\{ y=s*x+t\right\} ,\\
\left(s\right) & \in\left[s,t\right],\\
\left(\infty\right) & \in\left[\infty\right],
\end{align}
for every $x,y,s\in\mathcal{O}$. But, since the image of the line
$\left[s,t\right]$ is 
\begin{equation}
\left[\tau\left(\overline{s}\right),t\right]=\left\{ \left(x,y\right)\in\mathbb{O}:y=\tau\left(\overline{s}\right)\cdot\tau^{2}\left(\overline{x}\right)+t\right\} ,
\end{equation}
 and since 
\begin{equation}
s*x=\tau\left(\overline{s}\right)\cdot\tau^{2}\left(\overline{x}\right),
\end{equation}
we then have that 
\begin{align}
\left(x,y\right)^{\Phi} & =\left(\tau^{2}\left(\overline{x}\right),y\right)\in\left[\tau\left(\overline{s}\right),t\right]=\left[s,t\right]^{\Phi},\\
\left(s\right)^{\Phi} & =\left(\tau\left(\overline{s}\right)\right)\in\left[\tau\left(\overline{s}\right),t\right]=\left[s,t\right]^{\Phi},\\
\left(\infty\right) & \in\left[\infty\right],
\end{align}
which thus concludes the proof of the theorem.
\end{proof}
In the previous theorem we explicitly found an isomorphism between
the completion of the affine plane over the Okubo algebra and that
over the octonions. For practical reason it is also useful to have
the isomorphism $\widetilde{\Phi}:\mathcal{O}P^{2}\longrightarrow\mathbb{O}P^{2}$
developed for the Veronese formalism, i.e. between Veronese Okubic
and octonionic vectors. The isomorphism between the Okubic Veronese
vectors and octonionic Veronese vectors is given by
\begin{equation}
\begin{cases}
\left(x,y,x*y;n\left(y\right),n\left(x\right),1\right)\longrightarrow\left(\tau^{2}\left(\overline{x}\right),y,y\cdot\overline{\tau^{2}\left(\overline{x}\right)};n\left(y\right),n\left(x\right),1\right),\\
\left(0,0,x;n\left(x\right),1,0\right)\longrightarrow\left(0,0,\tau^{2}\left(\overline{x}\right);n\left(x\right),1,0\right),\\
\left(0,0,0;1,0,0\right)\longrightarrow\left(0,0,0;1,0,0\right),
\end{cases}
\end{equation}
where the first vectors are Veronese under conditions (\ref{eq:Okubo Ver-1})
and (\ref{eq:Okubo Ver-2}), while the image vectors are Veronese
under conditions (\ref{eq:Ver-1-Oct}) and (\ref{eq:Ver-2-Oct}) that
involves conjugation and octonionic product.

\subsection{Isomorphism with the para-octonionic plane}

Recall that a point of the paraoctonionic affine plane $\mathscr{A}_{2}\left(p\mathbb{O}\right)$
plane is given by a pair of elements $\left(x,y\right)$ with $x$,$y\in\left\{ p\mathbb{O}\right\} $,
while a \emph{line} of slope $s\in p\mathbb{O}$ and offset $t\in p\mathbb{O}$
is the set $\left[s,t\right]=\left\{ \left(x,s\bullet x+t\right):x\in p\mathbb{O}\right\} $
and, of course, we say that a point $\left(x,y\right)\in\mathscr{A}_{2}\left(p\mathbb{O}\right)$
is \emph{incident }to a line $\left[s,t\right]\subset\mathscr{A}_{2}\left(p\mathbb{O}\right)$
if belongs to such line, i.e. $\left(x,y\right)\in\left[s,t\right]$.
If the previous definitions define a para-octonionic affine plane,
a projective plane can be directly defined through the Veronese conditions
(\ref{eq:Ver-1-paraOct-1}) and (\ref{eq:Ver-2-paraOct-1}), i.e.,

\begin{align}
\lambda_{1}x_{1} & =x_{2}\bullet x_{3},\,\,\lambda_{2}x_{2}=x_{3}\bullet x_{1},\,\,\lambda_{3}x_{3}=x_{1}\bullet x_{2}\label{eq:Ver-1-paraOct}\\
n\left(x_{1}\right) & =\lambda_{2}\lambda_{3},\,n\left(x_{2}\right)=\lambda_{3}\lambda_{1},n\left(x_{3}\right)=\lambda_{1}\lambda_{2}.\label{eq:Ver-2-paraOct}
\end{align}

An explicit isomorphism between the Okubic projective plane $\mathcal{O}P^{2}$
and the para-octonionic projective plane $p\mathbb{O}P^{2}$  is obtained
considering the bijective map $p\Phi:\mathcal{O}P^{2}\longrightarrow p\mathbb{O}P^{2}$
given by
\begin{equation}
p\Phi:\begin{cases}
\left(x,y\right) & \longrightarrow\left(\tau^{2}\left(x\right),y\right),\\
\left(s\right) & \longrightarrow\left(\tau\left(s\right)\right),\\
\left(\infty\right) & \longrightarrow\left(\infty\right),\\
\left[s,t\right] & \longrightarrow\left[\tau\left(s\right),t\right],\\
\left[c\right] & \longrightarrow\left[\tau^{2}\left(c\right)\right],\\
\left[\infty\right] & \longrightarrow\left[\infty\right].
\end{cases}\label{eq:isomorfPOPO-1}
\end{equation}
The proof that the map given in (\ref{eq:isomorfPOPO-1}) is a collineation
adheres closely to the steps outlined in Theorem \ref{thm:Isomorphism}.
This is expected, given the similarity between the maps. The sole
distinction between para-octonions $p\mathbb{O}$ and octonions $\mathbb{O}$is
the presence of a paraunit in the former, as opposed to a unit in
the latter, and the fact that while the former is merely flexible,
the latter is alternative.

\subsection{Isometries}

Theorem \ref{thm:Isomorphism} and its para-octonionic counterpart
ensures projective isomorphism between the three planes $\mathcal{O}P^{2}$,
$\mathbb{O}P^{2}$ and $p\mathbb{O}P^{2}$. If projective spaces are
isomorphic, they share the same incidence relations between points
and lines. However, this does not imply that the distances or angles
between points are preserved and this is of high importance in our
case since, according to Lie theory, it is well-known that the collineation
group of the octonionic plane is the minimally non-compact real form
of the exceptional Lie group $\text{E}_{6}$, namely $\text{E}_{6(-26)}$,
while the group of elliptic motions, i.e. isometries, over the octonionic
projective plane is its maximal compact subgroup, namely $\text{F}_{4(-52)}$
(see \cite{corr Notes Octo}). By the map in (\ref{eq:isomorfPOPO})
we have an Okubic realization of both the Lie groups $\text{F}_{4(-52)}$
and $\text{E}_{6(-26)}$. Thus, we can write the homogeneous space
presentation of the compact Cayley-Moufang plane as $\text{F}_{4(-52)}/\text{Spin}\left(9\right)$,a
16-dimensional symmetric coset. In fact, we have the following
\begin{thm}
\label{thm:The-map-Isometric}The map $\Phi$ defined in (\ref{eq:isomorfPOPO})
is an isometry 
\end{thm}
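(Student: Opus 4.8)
The plan is to realise $\Phi$ as the projectivisation of a genuine \emph{linear} isometry of the ambient $27$-dimensional space $(V,q)$, where $q$ is the quadratic form of (\ref{eq:Norm element projective}) and $\beta$ its polarisation (\ref{eq:beta bilinear}). The elliptic (Cayley--Moufang) distance between two points $\mathbb{R}v,\mathbb{R}w$ is a fixed monotone function of the scale-invariant ratio $\beta(v,w)^{2}/\bigl(q(v)\,q(w)\bigr)$, so any collineation induced by a linear map $T\colon V\to V$ with $q\circ T=q$ automatically preserves $\beta$ (by polarisation, since for a linear $T$ the identity $q\circ T=q$ forces $\beta(Tv,Tw)=q(T(v+w))-q(Tv)-q(Tw)=\beta(v,w)$) and hence preserves all distances. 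Thus the whole statement reduces to producing such a $T$ and checking $q\circ T=q$ one coordinate at a time.

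First I would read off $T$ from the Veronese form of $\widetilde{\Phi}$ recorded right after Theorem \ref{thm:Isomorphism}. On the real coordinates $(\lambda_{1},\lambda_{2},\lambda_{3})$ the map is the identity, and on the first two octonionic slots it is $x_{1}\mapsto\tau^{2}(\overline{x_{1}})$ and $x_{2}\mapsto x_{2}$. The only non-obvious slot is the third, prescribed on the Veronese variety by $x*y\mapsto y\cdot\overline{\tau^{2}(\overline{x})}$; the key step is to recognise this as the restriction of a \emph{single} linear map $x_{3}\mapsto\tau(\overline{x_{3}})$. Writing $z=x*y=\tau(\overline{x})\cdot\tau^{2}(\overline{y})$ (the Petersson realisation) and using $\tau^{3}=\mathrm{id}$, the intertwining $\tau^{k}(\overline{w})=\overline{\tau^{k}(w)}$, and the anti-automorphism law $\overline{a\cdot b}=\overline{b}\cdot\overline{a}$, one computes $\tau(\overline{z})=\tau\bigl(\tau^{2}(y)\cdot\tau(x)\bigr)=y\cdot\tau^{2}(x)=y\cdot\overline{\tau^{2}(\overline{x})}$, so the third slot is indeed transformed by $\tau\circ(\,\overline{\ \cdot\ }\,)$. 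Consequently
\[
T\left(x_{1},x_{2},x_{3};\lambda_{1},\lambda_{2},\lambda_{3}\right)=\left(\tau^{2}(\overline{x_{1}}),\,x_{2},\,\tau(\overline{x_{3}});\,\lambda_{1},\lambda_{2},\lambda_{3}\right)
\]
is a linear map on $V$ inducing $\Phi$.

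It then remains to verify $q\circ T=q$. Since $q$ in (\ref{eq:Norm element projective}) is $n(x_{1})+n(x_{2})+n(x_{3})+\tfrac{1}{2}(\lambda_{1}^{2}+\lambda_{2}^{2}+\lambda_{3}^{2})$ and $T$ fixes the $\lambda_{\nu}$, it suffices that each octonionic component map preserve the common norm $n$. For this I would invoke two facts already in place: every automorphism of a Hurwitz or symmetric composition algebra is an isometry, so all powers of $\tau$ preserve $n$ on both $\mathcal{O}$ and $\mathbb{O}$; and the conjugation $\overline{x}=\langle x,e\rangle e-x$ preserves $n$, since $n(\overline{x})=\langle x,e\rangle^{2}n(e)-\langle x,e\rangle^{2}+n(x)=n(x)$ as $n(e)=1$. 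Hence $n(\tau^{2}(\overline{x_{1}}))=n(x_{1})$, $n(\tau(\overline{x_{3}}))=n(x_{3})$, the middle slot is untouched, and $q\circ T=q$; therefore $T\in\mathrm{O}(V,q)$, $\beta$ and all elliptic distances are preserved, and $\Phi$ is an isometry. The para-octonionic map (\ref{eq:isomorfPOPO-1}) is treated verbatim, with the conjugations dropped. I expect the genuine obstacle to be exactly the third paragraph's input produced in the second: showing that the bilinear-looking affine rule $x*y\mapsto y\cdot\overline{\tau^{2}(\overline{x})}$ collapses to one fixed norm-preserving linear map $\tau(\overline{x_{3}})$ on the whole coordinate, for this is precisely what upgrades the already-established collineation $\Phi$ to a linear isometry of $(V,q)$ and thus forces the metric to be preserved.
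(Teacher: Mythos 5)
Your proposal is correct, but it takes a genuinely different route from the paper's proof. The paper stays entirely in the affine chart: it equips both affine planes with the distance $d\left(p_{1},p_{2}\right)=n\left(x_{1}-x_{2}\right)^{2}+n\left(y_{1}-y_{2}\right)^{2}$, notes that $p_{i}^{\Phi}=\left(\tau^{2}\left(\overline{x}_{i}\right),y_{i}\right)$, and concludes in three lines using exactly the two facts you also invoke --- automorphisms of Hurwitz and para-Hurwitz algebras are isometries, and conjugation preserves $n$ --- applied only to the first coordinate, the second being untouched. You instead lift $\Phi$ to a linear map $T$ of the ambient $27$-dimensional space $\left(V,q\right)$ and verify $q\circ T=q$; the computation $\tau\left(\overline{x*y}\right)=y\cdot\tau^{2}\left(x\right)=y\cdot\overline{\tau^{2}\left(\overline{x}\right)}$, which collapses the third Veronese slot to the single linear map $x_{3}\mapsto\tau\left(\overline{x_{3}}\right)$, is correct and is the genuine extra content of your argument. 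What your route buys: it handles all points at once, including those on $\left[\infty\right]$; it exhibits $\Phi$ as induced by an element of $\mathrm{O}\left(V,q\right)$, hence preserving $\beta$ and the projective (elliptic) metric, which is the form of the statement actually needed for Corollary \ref{cor:Isometry} about elliptic motions and $\text{F}_{4\left(-52\right)}$ --- the paper's affine distance is not the elliptic metric, so the paper's proof and its corollary sit together somewhat loosely. What it costs: you must assume the standard (but nowhere stated in the paper) fact that the elliptic distance is a monotone function of $\beta\left(v,w\right)^{2}/\left(q\left(v\right)q\left(w\right)\right)$, and, with respect to the affine distance that the paper literally uses, your argument addresses the statement only indirectly. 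Incidentally, your $T$ exposes a typo in the paper's displayed Veronese form of $\widetilde{\Phi}$: consistency with the affine prescription $\left(s\right)\mapsto\left(\tau\left(\overline{s}\right)\right)$ forces the second line to read $\left(0,0,x;n\left(x\right),1,0\right)\mapsto\left(0,0,\tau\left(\overline{x}\right);n\left(x\right),1,0\right)$, with $\tau$ rather than $\tau^{2}$, which is precisely what your single linear map produces.
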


\begin{proof}
By construction the Okubic plane comes equipped with the following
distance:
\begin{equation}
d_{\mathcal{O}}\left(p_{1},p_{2}\right)=n\left(x_{1}-x_{2}\right)^{2}+n\left(y_{1}-y_{2}\right)^{2},
\end{equation}
with $p_{1}=\left(x_{1},y_{1}\right)\in\mathscr{A}^{2}\left(\mathcal{O}\right)$
and $p_{2}=\left(x_{2},y_{2}\right)\in\mathscr{A}^{2}\left(\mathcal{O}\right)$.
By its very construction the octonionic plane comes with the following
distance:
\begin{equation}
d_{\mathbb{O}}=n\left(x_{1}-x_{2}\right)^{2}+n\left(y_{1}-y_{2}\right)^{2},
\end{equation}
with $p_{1}=\left(x_{1},y_{1}\right)\in\mathscr{A}^{2}\left(\mathbb{O}\right)$
and $p_{2}=\left(x_{2},y_{2}\right)\in\mathscr{A}^{2}\left(\mathbb{O}\right)$.Then,
the images by $\Phi$ of the two points $p_{1}$ and $p_{2}$ are
given by
\begin{equation}
p_{1}^{\Phi}=\left(\tau^{2}\left(\overline{x}_{1}\right),y_{1}\right),p_{2}^{\Phi}=\left(\tau^{2}\left(\overline{x}_{2}\right),y_{2}\right).
\end{equation}
 Therefore the octonionic distance between $p_{1}^{\Phi}$ and $p_{2}^{\Phi}$
is given by 
\begin{equation}
d_{\mathbb{O}}\left(p_{1}^{\Phi},p_{2}^{\Phi}\right)=n\left(\tau^{2}\left(\overline{x}_{1}\right)-\tau^{2}\left(\overline{x}_{2}\right)\right)^{2}+n\left(y_{1}-y_{2}\right)^{2},
\end{equation}
 but since all automorphism of Hurwitz and para-Hurwitz algebras as
isometries and $\tau^{2}$ is an automorphism we have
\begin{align}
d_{\mathbb{O}}\left(p_{1}^{\Phi},p_{2}^{\Phi}\right) & =n\left(\tau^{2}\left(\overline{x}_{1}-\overline{x}_{2}\right)\right)^{2}+n\left(y_{1}-y_{2}\right)^{2}\\
 & =n\left(\overline{x}_{1}-\overline{x}_{2}\right)^{2}+n\left(y_{1}-y_{2}\right)^{2}\\
 & =n\left(x_{1}-x_{2}\right)^{2}+n\left(y_{1}-y_{2}\right)^{2}\\
 & =d_{\mathcal{O}}\left(p_{1},p_{2}\right),
\end{align}
thus completing the proof.
\end{proof}
Similar and straightforward proof can be given for the map $p\Phi$
for the para-octonionic case.

\subsection{Collineation groups}

As a corollary of Theorem \ref{thm:Isomorphism} we have that the
Lie group of collineations of the Okubic projective plane is the following 
\begin{cor}
\label{cor:Collineations}The group of collineations of the Okubic
projective plane is $\text{E}_{6\left(-26\right)}$. 
\end{cor}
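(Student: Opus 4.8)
The plan is to obtain this as an immediate consequence of the projective isomorphism $\Phi\colon\mathcal{O}P^{2}\longrightarrow\mathbb{O}P^{2}$ constructed in Theorem \ref{thm:Isomorphism}, combined with the classically known determination of the collineation group of the Cayley plane. The guiding principle is that the collineation group is a projective invariant: an isomorphism of projective planes transports collineations to collineations, so the two automorphism groups must be abstractly, and in fact topologically, isomorphic. I would therefore not attempt any direct computation inside $\mathcal{O}P^{2}$, but instead transfer the entire structure across $\Phi$.

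Concretely, I would first record that $\Phi$ is a bijection between the points of $\mathcal{O}P^{2}$ and those of $\mathbb{O}P^{2}$ preserving incidence in both directions, which is exactly the content of Theorem \ref{thm:Isomorphism}. Then, for any collineation $\varphi\in\text{Aut}\left(\mathcal{O}P^{2}\right)$, the conjugate $\Phi\circ\varphi\circ\Phi^{-1}$ is a bijection of the point set of $\mathbb{O}P^{2}$ sending lines to lines, hence a collineation of $\mathbb{O}P^{2}$; conversely $\Phi^{-1}\circ\psi\circ\Phi$ is a collineation of $\mathcal{O}P^{2}$ for every $\psi\in\text{Aut}\left(\mathbb{O}P^{2}\right)$. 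Since $\varphi\mapsto\Phi\circ\varphi\circ\Phi^{-1}$ manifestly respects composition and inverses, it is a group isomorphism
\[
\text{Aut}\left(\mathcal{O}P^{2}\right)\cong\text{Aut}\left(\mathbb{O}P^{2}\right).
\]
It then remains only to invoke the well-known fact, recalled in the discussion preceding Theorem \ref{thm:The-map-Isometric} and cited from \cite{corr Notes Octo}, that the full collineation group of the octonionic projective plane is the minimally non-compact real form $\text{E}_{6\left(-26\right)}$ of the exceptional Lie group of type $\text{E}_{6}$. Combining this with the isomorphism above yields $\text{Aut}\left(\mathcal{O}P^{2}\right)\cong\text{E}_{6\left(-26\right)}$, as claimed.

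I expect the genuinely delicate point to be not the algebraic conjugation step, which is routine, but the upgrade from an abstract group isomorphism to an isomorphism of \emph{Lie} groups, since the statement asserts equality with a specific real form. To make the identification rigorous I would observe that $\Phi$ is not merely a set-theoretic collineation: it is built from the maps $\tau$ and the conjugation $x\mapsto\overline{x}$, which are continuous (indeed real-analytic) on the finite-dimensional real algebra, so $\Phi$ is a homeomorphism of the underlying point sets. Conjugation by $\Phi$ is then a homeomorphism of the collineation groups in their natural topologies, which promotes the abstract isomorphism to one of topological, and hence Lie, groups; this is precisely what transports the Lie-group structure $\text{E}_{6\left(-26\right)}$ to $\text{Aut}\left(\mathcal{O}P^{2}\right)$. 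One should finally remark that the same argument applied to the para-octonionic isomorphism $p\Phi$ of \eqref{eq:isomorfPOPO-1} shows that all three planes share this collineation group, consistent with the isometry statement of Theorem \ref{thm:The-map-Isometric}.
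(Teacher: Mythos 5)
Your proposal is correct and follows essentially the same route as the paper: conjugation by the isomorphism $\Phi$ of Theorem \ref{thm:Isomorphism} transports collineations in both directions, yielding a group isomorphism $\text{Aut}\left(\mathcal{O}P^{2}\right)\cong\text{Aut}\left(\mathbb{O}P^{2}\right)\cong\text{E}_{6\left(-26\right)}$. Your additional care in upgrading the abstract group isomorphism to a topological (hence Lie) isomorphism via the continuity of $\Phi$ is a point the paper's proof leaves implicit, and is a welcome refinement rather than a deviation.
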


\begin{proof}
Let $\Phi:\mathcal{O}P^{2}\longrightarrow\mathbb{O}P^{2}$ be the
isomorphism in (\ref{eq:isomorfPOPO}) and $\Phi^{-1}:\mathbb{O}P^{2}\longrightarrow\mathcal{O}P^{2}$
its inverse given by 
\begin{align}
\Phi^{-1} & :\begin{cases}
\left(x,y\right) & \longrightarrow\left(\tau\left(\overline{x}\right),y\right)\\
\left(s\right) & \longrightarrow\left(\tau^{2}\left(\overline{s}\right)\right)\\
\left(\infty\right) & \longrightarrow\left(\infty\right)\\
\left[s,t\right] & \longrightarrow\left[\tau^{2}\left(\overline{s}\right),t\right]\\
\left[c\right] & \longrightarrow\left[\tau\left(\overline{c}\right)\right]\\
\left[\infty\right] & \longrightarrow\left[\infty\right]
\end{cases},
\end{align}
where $x,y,s,t,c\in\mathbb{O}$. Then, since both $\Phi$ and $\Phi^{-1}$
send lines to lines, for every collineation of the octonionic projective
plane $\gamma\in\text{Aut}\left(\mathbb{O}P^{2}\right)$, then the
composition of collineations
\begin{equation}
\widetilde{\gamma}=\Phi^{-1}\gamma\Phi,\label{eq:isomorf-collinea}
\end{equation}
 is a collineation of the Okubic projective plane. Conversely any
collineation $\widetilde{\gamma}\in\text{Aut}\left(\mathcal{O}P^{2}\right)$
induces a collineation 
\begin{equation}
\gamma=\Phi\widetilde{\gamma}\Phi^{-1},
\end{equation}
in $\text{Aut}\left(\mathbb{O}P^{2}\right)$. Moreover, it is clear
that
\begin{equation}
\widetilde{\gamma}\circ\widetilde{\delta}=\left(\Phi^{-1}\gamma\Phi\right)\left(\Phi^{-1}\delta\Phi\right)=\Phi^{-1}\left(\gamma\delta\right)\Phi,
\end{equation}
so that the two collineation groups are identical $\text{Aut}\left(\mathbb{O}P^{2}\right)\cong\text{Aut}\left(\mathcal{O}P^{2}\right)$,
i.e. $\text{Aut}\left(\mathcal{O}P^{2}\right)\cong\text{E}_{6\left(-26\right)}$
.
\end{proof}
For the sake of completeness, we now recover the collineation $\widetilde{\gamma}=\Phi^{-1}\gamma\Phi$
corresponding to the octonionic collineation $\gamma$ given by the
reflection $\left(x,y\right)\longrightarrow\left(y,x\right)$. By
(\ref{eq:isomorf-collinea}), we have that the octonionic reflection
given by switching coordinates is on the Okubic plane given by the
collineation
\begin{equation}
\widetilde{\gamma}:\left(x,y\right)\longrightarrow\left(\tau\left(\overline{y}\right),\tau^{2}\left(\overline{x}\right)\right).
\end{equation}

Moreover, given that the group of elliptic motion of the octonionic
plane is $\text{F}_{4\left(-52\right)}$, we have the following corollary
of Theorem \ref{thm:The-map-Isometric} 
\begin{cor}
\label{cor:Isometry}The group of elliptic motion of the Okubic projective
plane is $\text{F}_{4\left(-52\right)}$. 
\end{cor}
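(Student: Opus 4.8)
The plan is to run the conjugation argument of Corollary~\ref{cor:Collineations} once more, but now keeping track of the metric: there the map $\Phi$ of (\ref{eq:isomorfPOPO}) was used only as a collineation to transport $\text{Aut}\left(\mathbb{O}P^{2}\right)$ onto $\text{Aut}\left(\mathcal{O}P^{2}\right)$, whereas here I would exploit that, by Theorem~\ref{thm:The-map-Isometric}, $\Phi$ is \emph{simultaneously} a collineation (Theorem~\ref{thm:Isomorphism}) and an isometry, so that conjugation by $\Phi$ carries the subgroup of distance-preserving collineations of $\mathbb{O}P^{2}$ onto that of $\mathcal{O}P^{2}$.

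Concretely, I would first note that $\Phi^{-1}$ is again an isometry: applying the identity $d_{\mathbb{O}}\left(\Phi(p),\Phi(q)\right)=d_{\mathcal{O}}(p,q)$ to the points $\Phi^{-1}(a),\Phi^{-1}(b)$ yields $d_{\mathcal{O}}\left(\Phi^{-1}(a),\Phi^{-1}(b)\right)=d_{\mathbb{O}}(a,b)$. Next, let $\gamma$ be any elliptic motion of $\mathbb{O}P^{2}$, i.e.\ any element of $\text{F}_{4\left(-52\right)}$, which is both a collineation and an isometry of the octonionic plane. Setting $\widetilde{\gamma}=\Phi^{-1}\gamma\Phi$, Corollary~\ref{cor:Collineations} already gives that $\widetilde{\gamma}\in\text{Aut}\left(\mathcal{O}P^{2}\right)$, and the chain
\[
d_{\mathcal{O}}\bigl(\widetilde{\gamma}(p),\widetilde{\gamma}(q)\bigr)=d_{\mathbb{O}}\bigl(\gamma(\Phi(p)),\gamma(\Phi(q))\bigr)=d_{\mathbb{O}}\bigl(\Phi(p),\Phi(q)\bigr)=d_{\mathcal{O}}(p,q),
\]
whose three equalities use respectively that $\Phi^{-1}$ is an isometry, that $\gamma$ is an isometry, and that $\Phi$ is an isometry, shows that $\widetilde{\gamma}$ is an elliptic motion of $\mathcal{O}P^{2}$. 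The same computation with $\Phi$ and $\Phi^{-1}$ interchanged shows that $\gamma=\Phi\widetilde{\gamma}\Phi^{-1}$ is an elliptic motion of $\mathbb{O}P^{2}$ whenever $\widetilde{\gamma}$ is one of $\mathcal{O}P^{2}$. As in Corollary~\ref{cor:Collineations} the assignment $\gamma\mapsto\Phi^{-1}\gamma\Phi$ respects composition and the two constructions are mutually inverse, so it is a group isomorphism between the motion group of $\mathbb{O}P^{2}$ and that of $\mathcal{O}P^{2}$; since the former is $\text{F}_{4\left(-52\right)}$ the latter is $\text{F}_{4\left(-52\right)}$ as well.

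I do not expect a genuine obstacle here: the whole content is that the property of \emph{being an isometry} is stable under conjugation by the isometry $\Phi$, which is exactly the displayed three-term chain of equalities. The only points requiring a little care are that one must use $\Phi$ as both a collineation and an isometry so that the conjugate actually lands in the motion group (the intersection of the collineation group with the isometries), and that the resulting abstract group isomorphism is compatible with the Lie-group structure so that the identification with $\text{F}_{4\left(-52\right)}$ is as real Lie groups; both are immediate from Theorems~\ref{thm:Isomorphism} and \ref{thm:The-map-Isometric} together with Corollary~\ref{cor:Collineations}.
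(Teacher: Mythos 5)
Your proof is correct and is essentially the paper's own argument: the paper states Corollary \ref{cor:Isometry} with no separate proof, presenting it as an immediate consequence of Theorem \ref{thm:The-map-Isometric} (together with Theorem \ref{thm:Isomorphism} and Corollary \ref{cor:Collineations}) plus the known fact that the elliptic motion group of $\mathbb{O}P^{2}$ is $\text{F}_{4\left(-52\right)}$ — i.e.\ exactly the conjugation-by-$\Phi$ transfer you describe. You have simply written out explicitly the three-equality chain that the paper leaves implicit.
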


Corollary \ref{cor:Collineations} and \ref{cor:Isometry} state the
existence of an Okubonic geometric realization of exceptional Lie
groups $\text{E}_{6\left(-26\right)}$ and $\text{F}_{4\left(-52\right)}$
which, to our knowledge was never pointed out.

\subsection{\label{subsec:Stabilizer-of-a}$G_{2}$ as stabilizer of a quadrangle}

Another exceptional Lie group with direct geometrical significance
in the octonionic plane is $G_{2\left(-14\right)}$. This exceptional
Lie group is recognized as the group of automorphisms of the octonions,
denoted as $\text{Aut}\left(\mathbb{O}\right)=G_{2\left(-14\right)}$.
However, this is also the group $\Gamma\left(\diamondsuit,\mathbb{O}\right)$
of collineations that fix every point of a quadrangle \cite{corr Notes Octo,Compact Projective}
of the projective octonionic plane. Given Theorems \ref{thm:Isomorphism}
and \ref{thm:The-map-Isometric} our objective is to identify an Okubic
realization of $G_{2\left(-14\right)}$. To this end, we examine the
subgroups of collineations $\Gamma\left(\triangle,\mathbb{\mathcal{O}}\right)$
and $\Gamma\left(\diamondsuit,\mathbb{\mathcal{O}}\right)$. Specifically,
the former represents the group of collineations that preserve every
point of the triangle $\triangle=\left\{ \left(0,0\right),\left(0\right),\left(\infty\right)\right\} $,
while the latter is the group that maintains every point of the quadrangle
$\diamondsuit=\left\{ \left(0,0\right),\left(e,e\right),\left(0\right),\left(\infty\right)\right\} $. 
\begin{prop}
\label{prop:Stabilizer Tri}The group $\Gamma\left(\triangle,\mathbb{\mathcal{O}}\right)$
of collineations that fix every point of $\triangle$ are transformations
of this form 
\begin{align}
\left(x,y\right) & \mapsto\left(A\left(x\right),B\left(y\right)\right)\\
\left(s\right) & \mapsto\left(C\left(s\right)\right)\\
\left(\infty\right) & \mapsto\left(\infty\right)
\end{align}
where $A,B$ and $C$ are automorphism for the sum over $\mathbb{\mathcal{O}}$
and in respect to multiplication they satisfy 
\begin{equation}
B\left(s*x\right)=C\left(s\right)*A\left(x\right).\label{eq:B(xs)=00003DC(s)A(x)}
\end{equation}
\end{prop}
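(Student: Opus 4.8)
The plan is to exploit the three lines determined by the triangle $\triangle=\left\{ \left(0,0\right),\left(0\right),\left(\infty\right)\right\}$ together with the division property of $\mathcal{O}$. Since any $\varphi\in\Gamma\left(\triangle,\mathcal{O}\right)$ fixes the three vertices, it fixes the three sides as sets: the $x$-axis $\left[0,0\right]$ (through $\left(0,0\right)$ and $\left(0\right)$), the $y$-axis $\left[0\right]$ (through $\left(0,0\right)$ and $\left(\infty\right)$), and the line at infinity $\left[\infty\right]$ (the unique line through $\left(0\right)$ and $\left(\infty\right)$). Because $\varphi$ fixes $\left(\infty\right)$ it permutes the pencil of lines through $\left(\infty\right)$, and since $\left[\infty\right]$ is fixed the vertical lines $\left[c\right]$ must map to vertical lines; likewise, fixing $\left(0\right)$ forces the horizontal lines $\left[0,t\right]$ to map to horizontal lines. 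First I would use this to pin down the shape of $\varphi$: the fixed $x$-axis gives $\left(x,0\right)\mapsto\left(A\left(x\right),0\right)$, the fixed $y$-axis gives $\left(0,y\right)\mapsto\left(0,B\left(y\right)\right)$, and writing a general affine point as the intersection $\left(x,y\right)=\left[x\right]\wedge\left[0,y\right]$ and transporting the two lines yields $\left(x,y\right)\mapsto\left(A\left(x\right),B\left(y\right)\right)$. On the line at infinity one obtains $\left(s\right)\mapsto\left(C\left(s\right)\right)$ with $C\left(0\right)=0$, $C\left(\infty\right)=\infty$, and $A,B,C$ bijections of $\mathcal{O}$ fixing $0$.

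Next I would extract the algebraic content by insisting that $\varphi$ send lines to lines. The image of $\left[s,t\right]$ is a line passing through $\varphi\left(s\right)=\left(C\left(s\right)\right)$ and through $\varphi\left(0,t\right)=\left(0,B\left(t\right)\right)$, hence it is exactly $\left[C\left(s\right),B\left(t\right)\right]$. Comparing the image of the generic point $\left(x,s*x+t\right)$ with this line gives the master identity
\begin{equation}
B\left(s*x+t\right)=C\left(s\right)*A\left(x\right)+B\left(t\right),
\end{equation}
valid for all $x,s,t\in\mathcal{O}$. Setting $t=0$ and using $B\left(0\right)=0$ yields precisely the claimed multiplicative relation $B\left(s*x\right)=C\left(s\right)*A\left(x\right)$.

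Finally I would read off additivity. Substituting the multiplicative relation back into the master identity reduces it to $B\left(s*x+t\right)=B\left(s*x\right)+B\left(t\right)$; since $\mathcal{O}$ is a division algebra, for fixed $s\neq0$ the map $x\mapsto s*x$ is surjective by Proposition \ref{prop:SolutionLinearEq a*x=00003Db}, so $B$ is additive on all of $\mathcal{O}$. Additivity of $A$ and $C$ then follows by cancellation: using bilinearity of $*$ together with the additivity of $B$, one finds $C\left(s\right)*A\left(x_{1}+x_{2}\right)=C\left(s\right)*\left(A\left(x_{1}\right)+A\left(x_{2}\right)\right)$ and $C\left(s_{1}+s_{2}\right)*A\left(x\right)=\left(C\left(s_{1}\right)+C\left(s_{2}\right)\right)*A\left(x\right)$; choosing $s$ (resp.\ $x$) with $C\left(s\right)\neq0$ (resp.\ $A\left(x\right)\neq0$) and cancelling the nonzero left (resp.\ right) factor, which is legitimate because $\mathcal{O}$ has no zero divisors, gives the additivity of $A$ and $C$. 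I expect the main obstacle to be exactly this passage from the single functional equation to additivity: the absence of a unit in $\mathcal{O}$ blocks the naive substitutions one would make over a field, so one must instead lean on surjectivity of left multiplication and on cancellation of nonzero factors, both of which are available only because $\mathcal{O}$ is a division composition algebra.
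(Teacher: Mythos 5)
Your proposal is correct, and its skeleton coincides with the paper's: both arguments pin down the product form $(x,y)\mapsto(A(x),B(y))$, $(s)\mapsto(C(s))$ from the preservation of the pencils of lines through the fixed vertices, and both reduce everything to the functional equation $B(s*x+t)=C(s)*A(x)+B(t)$, obtained by matching the image of $[s,t]$ against the line through $(C(s))$ and $(0,B(t))$. Where you go further, however, is after this point, and your additions close real gaps in the paper's own argument. The paper stops by remarking that, given the functional equation, additivity of $B$ is \emph{equivalent} to the relation $B(s*x)=C(s)*A(x)$; it never derives either statement unconditionally, and it says nothing about the additivity of $A$ and $C$, even though the proposition asserts it. You obtain the multiplicative relation outright by setting $t=0$ (legitimate because $\varphi$ fixes $(0,0)$, so $B(0)=0$), then deduce additivity of $B$ from the surjectivity of left multiplication $x\mapsto s*x$ given by the unique solvability of $s*x=b$ in the division algebra $\mathcal{O}$, and finally transfer additivity to $A$ and $C$ via bilinearity of $*$ together with cancellation of a nonzero factor, which is valid precisely because $\mathcal{O}$ has no zero divisors. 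So yours is not a different route but a completed version of the paper's proof: same decomposition, same key identity, plus the steps the paper leaves implicit.
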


\begin{proof}
A collineation that fixes $\left(0,0\right)$, $\left(0\right)$ and
$\left(\infty\right)$, thus it also leaves invariant the $x$-axis
and $y$-axis. Moreover, since the incidence relations must be preserved,
maps all lines parallel to the $x$-axis and the $y$-axis to lines
parallel to the $x$-axis and the $y$-axis. Then, the first coordinate
is the image of a function that does not depend on $y$ and the second
coordinate is image of a fuction that does not depend by $x$, i.e.
$\left(x,y\right)\mapsto\left(A\left(x\right),B\left(y\right)\right)$
and $\left(s\right)\mapsto\left(C\left(s\right)\right)$. Now consider
the image of a point on the line $\left[s,t\right]$. The point is
of the form $\left(x,s*x+t\right)$ and its image goes to 
\begin{equation}
\left(x,s*x+t\right)\mapsto\left(A\left(x\right),B\left(s*x+t\right)\right).
\end{equation}
In order this to be a collineation, the points of $\left[s,t\right]$
must all belong to a line that, setting $x=0$, passes through the
points $p_{1}=\left(0,B\left(t\right)\right)$ and $p_{2}=\left(C\left(s\right)\right)$,
e.g. $\left[C\left(s\right),B\left(t\right)\right]$. Every line $\left(A\left(x\right),B\left(s*x+t\right)\right)$
passing through $p_{1}$ and $p_{2}$ must satisfy the condition 
\begin{equation}
B\left(s*x+t\right)=C\left(s\right)*A\left(x\right)+B\left(t\right).\label{eq:Condition B=00003D00003DCA}
\end{equation}
Given (\ref{eq:Condition B=00003D00003DCA}), if $B$ is an automorphism
with respect to the sum over $\mathbb{\mathcal{O}}$, then $B\left(s*x\right)=C\left(s\right)*A\left(x\right)$.
Conversely if $B\left(s*x\right)=C\left(s\right)*A\left(x\right)$
is true than $B\left(s*x+t\right)=B\left(s*x\right)+B\left(t\right)$
and $B$ is an automorphism with respect to the sum. 
\end{proof}
A further corollary of Theorem \ref{thm:Isomorphism} is that the
Okubic projective plane, being the 16-dimensional Moufang planes,
has a collineation group transitive on quadrangles. Without loss of
generality we can thus consider the quadrangle $\diamondsuit$ given
by the points $\left(0,0\right)$, $\left(e,e\right)$, $\left(0\right)$
and $\left(\infty\right)$, that is $\diamondsuit=\triangle\cup\left\{ \left(e,e\right)\right\} $,
and consider the collineations that fix the set $\diamondsuit$.

For this purpose, it is important to note that a relation analogous
to (\ref{eq:B(xs)=00003DC(s)A(x)}) holds in the case of both para-octonions
and octonions. Specifically, in the octonionic case, we have
\begin{equation}
B\left(s\cdot x\right)=C\left(s\right)\cdot A\left(x\right),
\end{equation}
for all $x,s\in\mathbb{O}$. This is particularly relevant since,
in the case of octonions, the relation simplifies if we require the
collineations to also stabilize the point $\left(e,e\right)$, i.e.
to stabilize the non-degenerate quadrangle $\diamondsuit=\triangle\cup\left\{ \left(e,e\right)\right\} .$
In this scenario, the previous formula transforms into
\begin{equation}
A\left(s\cdot x\right)=A\left(s\right)\cdot A\left(x\right),
\end{equation}
which implies that the stabilizer of the non-degenerate quadrangle
is isomorphic to the automorphism group of the octonions, denoted
as $G_{2\left(-14\right)}.$ Switching back to the Okubic algebra
from the previous result, we then arrive at the following statement:
\begin{thm}
The exceptional Lie group $G_{2\left(-14\right)}$ has the following
realisation 

\begin{equation}
\begin{array}{cc}
G_{2\left(-14\right)}\cong\left\{ \left(A,B,C\right)\in\text{Spin}\left(8\right):\right. & B\left(s*x\right)=C\left(s\right)*A\left(x\right),\\
 & \left.A\left(e\right)=B\left(e\right)=C\left(e\right)=e\right\} ,
\end{array}\label{eq:G2-Ok}
\end{equation}
or, equivalently

\begin{equation}
\begin{array}{cc}
G_{2\left(-14\right)}\cong\left\{ \left(A,B,C\right)\in\text{Tri}\left(\mathcal{O}\right):\right. & B\left(e\ast x\right)=e\ast A\left(x\right),\\
 & \left.B\left(x\ast e\right)=C\left(x\right)\ast e\right\} ,
\end{array}\label{eq:G2-Ok-1}
\end{equation}
where $x,s\in\mathbb{\mathcal{O}}$ and $e*e=e\in\mathbb{\mathcal{O}}$.
\end{thm}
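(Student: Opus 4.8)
The plan is to identify both sets in (\ref{eq:G2-Ok}) and (\ref{eq:G2-Ok-1}) with the stabiliser $\Gamma\left(\diamondsuit,\mathcal{O}\right)$ of the quadrangle $\diamondsuit=\left\{ \left(0,0\right),\left(e,e\right),\left(0\right),\left(\infty\right)\right\}$, and then to pin down its isomorphism type by transporting the octonionic computation through the isometric isomorphism $\Phi$ of Theorem \ref{thm:Isomorphism}. First I would invoke Proposition \ref{prop:Stabilizer Tri}: every collineation fixing the triangle $\triangle$ pointwise is a triple $\left(A,B,C\right)$ of additive bijections of $\mathcal{O}$ satisfying the triality-type relation (\ref{eq:B(xs)=00003DC(s)A(x)}), namely $B\left(s*x\right)=C\left(s\right)*A\left(x\right)$. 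Imposing in addition that the fourth point $\left(e,e\right)$ be fixed means $A\left(e\right)=e$ and $B\left(e\right)=e$; substituting $s=x=e$ into the relation gives $B\left(e\right)=C\left(e\right)*A\left(e\right)$, whence $C\left(e\right)*e=e=R_{e}\left(e\right)$, and the invertibility of $R_{e}$ forces $C\left(e\right)=e$. This already produces the three normalisation constraints of (\ref{eq:G2-Ok}).

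The next step is to show that such a triple actually lies in $\text{Spin}\left(8\right)$, i.e. that $A,B,C$ are orthogonal and that $\left(A,B,C\right)$ is an element of the triality group $\text{Tri}\left(\mathcal{O}\right)\cong\text{Spin}\left(8\right)$. Rather than extracting orthogonality directly from the bare relation (\ref{eq:B(xs)=00003DC(s)A(x)}), I would transport through $\Phi$. By Corollary \ref{cor:Collineations}, conjugation by $\Phi$ is an isomorphism $\text{Aut}\left(\mathcal{O}P^{2}\right)\to\text{Aut}\left(\mathbb{O}P^{2}\right)$; moreover a direct check using $\overline{e}=e$ and $\tau\left(e\right)=e$ shows that $\Phi$ in (\ref{eq:isomorfPOPO}) fixes each of the four points of $\diamondsuit$ (with $e$ the unit of the octonion product recovered from $\mathcal{O}$), so it restricts to an isomorphism $\Gamma\left(\diamondsuit,\mathcal{O}\right)\cong\Gamma\left(\diamondsuit,\mathbb{O}\right)$. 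The octonionic computation recalled just before the statement identifies $\Gamma\left(\diamondsuit,\mathbb{O}\right)$ with $\text{Aut}\left(\mathbb{O}\right)=G_{2\left(-14\right)}$; since octonionic automorphisms are isometries and $\Phi$, conjugation and $\tau$ are all isometries (Theorem \ref{thm:The-map-Isometric}), each of $A,B,C$ is a composite of isometries and hence orthogonal. Combined with the relation $B\left(s*x\right)=C\left(s\right)*A\left(x\right)$, this places $\left(A,B,C\right)$ in $\text{Tri}\left(\mathcal{O}\right)$ and gives simultaneously the abstract isomorphism $\Gamma\left(\diamondsuit,\mathcal{O}\right)\cong G_{2\left(-14\right)}$ and the explicit description (\ref{eq:G2-Ok}).

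Finally I would verify the equivalence of the two realisations. Since $B\left(s*x\right)=C\left(s\right)*A\left(x\right)$ is precisely the defining relation of $\text{Tri}\left(\mathcal{O}\right)$, the condition ``$\left(A,B,C\right)\in\text{Spin}\left(8\right)$ with (\ref{eq:B(xs)=00003DC(s)A(x)})'' and the condition ``$\left(A,B,C\right)\in\text{Tri}\left(\mathcal{O}\right)$'' describe the same set. Specialising the relation at $s=e$ yields $B\left(e*x\right)=C\left(e\right)*A\left(x\right)$ and at $x=e$ yields $B\left(x*e\right)=C\left(x\right)*A\left(e\right)$; hence $A\left(e\right)=B\left(e\right)=C\left(e\right)=e$ immediately implies the two conditions of (\ref{eq:G2-Ok-1}). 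Conversely, given those two conditions, comparing $B\left(e*x\right)=e*A\left(x\right)$ with $B\left(e*x\right)=C\left(e\right)*A\left(x\right)$ and using surjectivity of $A$ together with invertibility of $R_{e}$ gives $C\left(e\right)=e$; comparing $B\left(x*e\right)=C\left(x\right)*e$ with $B\left(x*e\right)=C\left(x\right)*A\left(e\right)$ and using surjectivity of $C$ together with invertibility of $L_{e}$ gives $A\left(e\right)=e$; and then $s=x=e$ gives $B\left(e\right)=e$.

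\textbf{Main obstacle.} The delicate point is the $\text{Spin}\left(8\right)$/$\text{Tri}\left(\mathcal{O}\right)$-membership: Proposition \ref{prop:Stabilizer Tri} only guarantees that $A,B,C$ are additive bijections, and deriving orthogonality and the correct spin structure from the single identity (\ref{eq:B(xs)=00003DC(s)A(x)}) by hand is awkward. The isometric conjugation $\Phi$ is the device that bypasses this, reducing everything to the orthogonality of octonionic automorphisms together with the standard fact $\text{Tri}\left(\mathcal{O}\right)\cong\text{Spin}\left(8\right)$. One also has to confirm that $\left(A,B,C\right)\mapsto$ collineation is a group homomorphism, so that the resulting bijection with $G_{2\left(-14\right)}$ is an isomorphism of groups, but this is routine once the relation and normalisations are in place.
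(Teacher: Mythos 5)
Your proposal is correct and takes essentially the same route as the first of the paper's two proofs: Proposition \ref{prop:Stabilizer Tri} plus the requirement that $\left(e,e\right)$ be fixed gives the normalisation $A\left(e\right)=B\left(e\right)=C\left(e\right)=e$, and conjugation by the isomorphism $\Phi$ of Theorem \ref{thm:Isomorphism} (which, as you note, fixes $\diamondsuit$ pointwise) reduces everything to the known identification of the quadrangle stabiliser of $\mathbb{O}P^{2}$ with $\text{Aut}\left(\mathbb{O}\right)=G_{2\left(-14\right)}$. Your additional care --- obtaining orthogonality of $A,B,C$ by writing them as composites of isometries, and the division-algebra argument showing that the constraints in (\ref{eq:G2-Ok}) and (\ref{eq:G2-Ok-1}) cut out the same subgroup --- only makes explicit details the paper leaves implicit (the latter manipulation is precisely how the paper's second, purely Lie-theoretic proof via Dynkin's embedding theorem begins), so nothing essential is missing.
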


We give here two independent proofs of the same statement. The first
is a Corollary of the isomorphism in Theorem \ref{thm:Isomorphism},
relying on the fact that the stabilizer of a non-degenerate quadrangle
on the octonionic projective plane is $\text{G}_{2\left(-14\right)}.$
The second is a proof on Lie theory, that does not rely on the knowledge
of the geometry of the octonionic plane. 
\begin{proof}
From the previous proposition the stabilizer of the triangle $\triangle$
is given by the group of triples $\left(A,B,C\right)\in\text{Spin}\left(8\right)$
such that
\begin{equation}
B\left(s*x\right)=C\left(s\right)*A\left(x\right),
\end{equation}
with $x,s\in\mathbb{\mathcal{O}}$. To stabilize $\diamondsuit=\triangle\cup\left\{ \left(e,e\right)\right\} $,
we have to impose 
\begin{equation}
\left(e,e\right)\mapsto\left(A\left(e\right),B\left(e\right)\right)=\left(e,e\right),
\end{equation}
and since $e*e=e$, then $C\left(e\right)=e$ and $A\left(e\right)=B\left(e\right)=C\left(e\right)=e$,
obtaining the RHS of (\ref{eq:G2-Ok}). Knowing that the stabilizer
of a non-degenerate quadrangle of the compact 16-dimensional Moufang
plane is $\text{G}_{2\left(-14\right)}$, we then obtain the identification
with the LHS of (\ref{eq:G2-Ok}).
\end{proof}
We now proceed with the second proof of Theorem 29.
\begin{proof}
Recall that the triality group Tri$\left(\mathcal{O}\right)$ of the
real Okubo algebra $\mathcal{O}$ is defined as
\begin{equation}
\text{Tri}\left(\mathcal{O}\right):=\left\{ A,B,C\in\text{Spin}\left(\mathcal{O}\right):B\left(s\ast x\right)=C\left(s\right)\ast A\left(x\right),~\forall s,x\in\mathcal{O}\right\} ,\label{def-Tri}
\end{equation}
and that, as proved in \cite{KMRT}, $\text{Tri}\left(\mathcal{O}\right)\simeq\text{Spin}\left(8\right)\simeq\text{Spin}\left(\mathcal{O}\right).$
Let us consider the action of triality (\ref{def-Tri}) in three cases:
the first where $s=x=e$, for which 
\begin{equation}
B\left(e\right)=C\left(e\right)\ast A\left(e\right),\label{eq:1}
\end{equation}
 the second with $s\in\mathcal{O}$ and fixed $x=e$, i.e., 
\begin{equation}
B\left(s*e\right)=C\left(s\right)\ast A\left(e\right),\label{eq:2}
\end{equation}
finally, the case $x\in\mathcal{O}$ and $s=e$ for which
\begin{equation}
B\left(e*x\right)=C\left(e\right)\ast A\left(x\right).\label{eq:3}
\end{equation}
Now, we want to determine the subgroup of Tri$\left(\mathcal{O}\right)$
defined by the following constraints
\begin{eqnarray}
B\left(e\ast x\right) & = & e\ast A\left(x\right),\label{con-1}\\
B\left(x\ast e\right) & = & C\left(x\right)\ast e.\label{con-2}
\end{eqnarray}
Since $\mathcal{O}$ is a division algebra, (\ref{eq:3}) and the
constraint (\ref{con-1}), as well as (\ref{eq:2}) and the constraint
(\ref{con-2}), imply
\begin{equation}
C\left(e\right)=e=A\left(e\right),\label{conss}
\end{equation}
which in turn imply, by (\ref{eq:1}), that $B\left(e\right)=e\ast e=e.$
Thus one can reformulate the constraints (\ref{con-1}) and (\ref{con-2}),
for any subgroup of Tri$\left(\mathcal{O}\right)\simeq\text{Spin}\left(8\right)$,
with (\ref{conss}). 

A well-known theorem by Dynkin (see Th. 1.5 of \cite{Dynkin}) states
that a maximal (and non-symmetric) embedding of $\text{SU}_{3}=$Aut$\left(\mathcal{O}\right)$
into $\text{Spin}\left(8\right)$ exists such that all 8-dimensional
irreducible representations (henceforth denoted with the term ``irreprs.'')
of $\text{Spin}\left(8\right)$ stay irreducible in $\text{SU}_{3}$,
all reducing to the same adjoint representation. By using the Dynkin
labels to identify the representations, it holds that 
\begin{gather}
\text{Spin}\left(8\right)\underset{\text{\text{max,ns}}}{\supset}\text{SU}_{3},\\
\begin{array}{l}
\left(1,0,0,0\right)=\left(1,1\right),\\
\left(0,0,0,1\right)=\left(1,1\right),\\
\left(0,0,1,0\right)=\left(1,1\right),
\end{array}\nonumber 
\end{gather}
where we adopted the conventions of \cite{Yamatsu}, and the subscripts
``max'', ``s'' and ``ns'' respectively stand for maximal, symmetric
and non-symmetric. Thus, for the triality of Spin$\left(8\right)$,
the adjoint irrepr. $\left(1,1\right)$ of $\text{SU}_{3}$, for which
the basis (\ref{eq:definizione i ottonioniche}) of the Okubo algebra
$\mathcal{O}$ provides a realization
\begin{equation}
\mathcal{O}\simeq\left\{ e,\text{i}_{1},..,\text{i}_{7}\right\} \simeq\left(1,1\right)\text{~of~}\text{SU}_{3},
\end{equation}
can be mapped to any of the three 8-dimensional irreprs. of $\text{Spin}\left(8\right)$
and, with no loss of generality, up to triality of $\text{Spin}\left(8\right)$
, one can identify
\begin{equation}
\begin{array}{l}
C\left(\mathcal{O}\right):=\left\{ C\left(e\right),C\left(\text{i}_{1}\right),..,C\left(\text{i}_{7}\right)\right\} \simeq\left(1,0,0,0\right)~\text{of~Spin}\left(8\right),\\
A\left(\mathcal{O}\right):=\left\{ A\left(e\right),A\left(\text{i}_{1}\right),..,A\left(\text{i}_{7}\right)\right\} \simeq\left(0,0,0,1\right)~\text{of~Spin}\left(8\right),\\
B\left(\mathcal{O}\right):=\left\{ B\left(e\right),B\left(\text{i}_{1}\right),..,B\left(\text{i}_{7}\right)\right\} \simeq\left(0,0,1,0\right)~\text{of~Spin}\left(8\right).
\end{array}
\end{equation}

We now implement the first constraint of (\ref{conss}), namely of
$C\left(e\right)=e$. The largest subgroup of $\text{Spin}\left(8\right)$
allowing $C\left(e\right)=e$ is its maximal (and symmetric) subgroup
Spin$\left(7\right)$, for which it holds that
\begin{gather}
\text{Spin}(8)\underset{\text{\text{max,s}}}{\supset}\text{Spin}\left(7\right)\ni A,B,C:\left\{ \begin{array}{l}
\left(1,0,0,0\right)=\underset{C\left(e\right)=e}{\left(0,0,0\right)}\oplus\underset{\left\{ C\left(\text{i}_{1}\right),..,C\left(\text{i}_{7}\right)\right\} }{\left(1,0,0\right)},\\
\left(0,0,0,1\right)=\underset{\left\{ A\left(e\right),A\left(\text{i}_{1}\right),..,A\left(\text{i}_{7}\right)\right\} }{\left(0,0,1\right)},\\
\left(0,0,1,0\right)=\underset{\left\{ B(e),B(\text{i}_{1}),..,B(\text{i}_{7})\right\} }{\left(0,0,1\right)},
\end{array}\right.
\end{gather}
where $\left(1,0,0\right)$ is the 7-dimensional fundamental (\textit{vector})
irrepr. of $\text{Spin}\left(7\right)$ and $\left(0,0,1\right)$
denotes its 8-dimensional (\textit{spinor}) irrepr.. Next, one must
impose the second constraint of (\ref{conss}). This can be implemented
by a further symmetry breaking implying $A\left(e\right)=e$, which,
together to $C\left(e\right)=e$, implies also that $B\left(e\right)=e$.
The largest subgroup of $\text{Spin}\left(7\right)$ allowing for
an action of this kind is $G_{2(-14)}$, which is a maximal and non-symmetric
subgroup of $\text{Spin}\left(7\right)$ itself : 
\begin{gather}
\text{Spin}\left(7\right)\underset{\text{\text{max,ns}}}{\supset}G_{2(-14)}\ni A,B,C:\\
\left\{ \begin{array}{l}
\left(0,0,0\right)=\underset{C\left(e\right)=e}{\left(0,0\right)},\\
\left(1,0,0\right)=\underset{\left\{ C\left(\text{i}_{1}\right),..,C\left(\text{i}_{7}\right)\right\} }{\left(0,1\right)},\\
\left(0,0,1\right)=\underset{A\left(e\right)=e}{\left(0,0\right)}\oplus\underset{\left\{ A\left(\text{i}_{1}\right),..,A\left(\text{i}_{7}\right)\right\} }{\left(0,1\right)},\\
\left(0,0,1\right)=\underset{B\left(e\right)=e}{\left(0,0\right)}\oplus\underset{\left\{ B\left(\text{i}_{1}\right),..,B\left(\text{i}_{7}\right)\right\} }{\left(0,1\right)}.
\end{array}\right.
\end{gather}
Thus, the (largest) subgroup of Tri$\left(\mathcal{O}\right)\simeq$$\text{Spin}\left(8\right)$
defined by the constraints (\ref{conss}) (or, equivalently, by the
constraints (\ref{con-1}) and (\ref{con-2})) is $G_{2(-14)}$, which
is next-to-maximal (and symmetric) in $\text{Spin}\left(8\right)$,
being determined by the chain of two maximal embeddings:
\begin{equation}
\text{Spin}\left(8\right)\supset_{\text{max,s}}\text{Spin}\left(7\right)\supset_{\text{max,ns}}G_{2(-14)}.
\end{equation}
\end{proof}

\section{\label{sec:Discussions-and-verifications}Discussions and verifications}

Let $A$ be an algebra, and let $x,y,z$ be elements of this algebra.
It is well known that the validity of Moufang identities (\ref{eq:MoufangIdent3})
in the algebra, i.e. 
\begin{align}
\left(\left(x\cdot y\right)\cdot x\right)\cdot z & =x\cdot\left(y\cdot\left(x\cdot z\right)\right),\\
\left(\left(z\cdot x\right)\cdot y\right)\cdot x & =z\cdot\left(x\cdot\left(y\cdot x\right)\right),\\
\left(x\cdot y\right)\cdot\left(z\cdot x\right) & =x\cdot\left(\left(y\cdot z\right)\cdot x\right),\label{eq:MoufangIdent3-1}
\end{align}
are linked with the Moufang properties of projective plane over the
algebra \cite[Sec. 12.15]{Mou35,Compact Projective}. Furthermore,
as an immediate corollary, if the algebra is unital, then, setting
$y=1$ the Moufang identities imply alternativity, resulting in 
\begin{align}
\left(x\cdot x\right)\cdot z & =x\cdot\left(x\cdot z\right),\\
\left(z\cdot x\right)\cdot x & =z\cdot\left(x\cdot x\right),
\end{align}
for every $x,z\in A$. In fact, the relation between alternative rings
and Moufang planes is a one-to-one correspondence \cite[p. 160]{Pi75}\cite[p. 143]{HP}
and is so deep that Moufang planes are also called ``alternative
planes'' \cite{Ste72}. 

Considering this background, the isomorphism between the Okubic projective
plane and the Cayley plane appears counterintuitive. The Okubic algebra
is non-alternative and non-unital, making it markedly different from
the often-considered octonionic algebra used for realizing the 16-dimensional
Moufang plane. Notably, the Moufang identities in (\ref{eq:MoufangIdent3-1})
do not hold in the Okubo algebra. Hence, the emergence of a Moufang
plane from a projective plane over the Okubo algebra demands a thorough
explanation.

In fact, the Moufang property of a plane is tied to the alternativity
of its associated planar ternary ring (PTR). Typically, this ternary
ring is linear (see below) and thus isomorphic to the algebra from
which the plane is defined. However, as we will show in this section,
the Okubic case is not so simple. Since the Okubic algebra lacks an
identity, it is not a ring and therefore cannot be employed to coordinatize
the Okubic projective plane. Clearly the best candidate for such coordinatization
are the octonions $\mathbb{O}$. 

\subsection{Coordinatizing the Okubic plane with Octonions}

In incidence geometry, relationships between incidence planes and
algebraic structures are developed after a process of relabeling called
coordinatisation that involves a set $\mathscr{C}$ containing the
symbols $0,1\in\mathscr{C}$ and not containing the symbol $\infty$.
In most of cases, dealing with unital algebras the set $\mathscr{C}$
is just the original algebra used for the definition of the plane,
but since the Okubic algebra is not unital the set of symbol $\mathscr{C}$
cannot be $\mathcal{O}$. In our case, a natural candidate for the
set $\mathscr{C}$ is clearly the ring of octonions $\mathbb{O}$.
To coordinatise the Okubic plane with octonionic coordinates we consider
the non-degenerate quadrangle $\diamondsuit=\left\{ \left(0,0\right),\left(e,e\right),\left(0\right),\left(\infty\right)\right\} $
of the Okubic projective plane $\mathcal{O}P^{2}$ and use the ring
of octonions $\mathbb{O}$ for its coordinatisation so that, in the
new coordinates, the quadrangle is $\left\{ \left(0,0\right),\left(1,1\right),\left(0\right),\left(\infty\right)\right\} $.
From the general theory we know \cite[Cor. 3.4]{Fau14} that up to
isomorphism there is a unique standard coordinatisation that maps
$\diamondsuit$ into $\left\{ \left(0,0\right),\left(1,1\right),\left(0\right),\left(\infty\right)\right\} $.
In fact, such coordinatisation is easily obtained sending the Okubic
elements with their respective octonionic representative following
the previous deformation of the product (see Tab. \ref{tab:Oku-Para-Octo}),
so that $e\in\mathcal{O}$ is sent to $1\in\mathbb{O}$ and the other
elements of the base $\left\{ \text{i}_{1},...,\text{i}_{7}\right\} $
in (\ref{eq:definizione i ottonioniche}) are sent into a multiple
of the imaginary unit of the octonions $\mathbb{O}$. 

Once the relabeling process called coordinatization is done, we can
now define a unique ternary ring $\theta\left(s,x,t\right)$ that
encodes algebraically the geometrical properties of the incidence
plane. Indeed, we define the planar ternary ring (PTR) by the incidence
rules of the plane so that 
\begin{equation}
\theta\left(s,x,t\right)=y,\,\,\,\text{iff \,\,}\left(x,y\right)\in\left[s,t\right],
\end{equation}
for all $x,y,s,t\in\mathscr{C}$. For this ternary ring we then define
an \emph{associated product} and an \emph{associated sum}, i.e.
\begin{align}
sx & \coloneqq\theta\left(s,x,0\right),\\
x+t & \coloneqq\theta\left(1,x,t\right).
\end{align}
Then algebraic properties of the associated product and of the associated
sum are then studied in order to deduce geometrical properties of
the coordinatized projective plane. 

Note that in case of the octonionic projective plane $\mathbb{O}P^{2}$
we have that the product and the sum of $\theta$ coincided with those
defined over the algebra of octonions $\left(\mathbb{O},+,\cdot\right)$
so that
\begin{equation}
\theta\left(s,x,t\right)=sx+t=s\cdot x+t.
\end{equation}
When this happen, the planar ternary ring is called\emph{ linear}
\cite[Sec. 22.4]{Compact Projective}. Unfortunately, this is not
the case for the Okubic projective plane so that the ternary ring
derived coordinatising the Okubic projective plane with the ring of
the octonions $\mathbb{O}$ is not linear since

\begin{equation}
\theta\left(s,x,t\right)=sx+t\neq s*x+t,
\end{equation}
as one easily might expect since the octonionic product is not the
Okubic product. 

In summary, while the planar ternary ring is alternative (thereby
not contradicting the one-to-one relationship between Moufang planes
and alternative rings), the originating algebra is non-alternative.
This distinction arises because the ternary ring derived from coordinatization
is nonlinear.
\begin{rem}
It is worth noting that while we are relabeling the Okubic coordinates
with octonions in order to obtain a planar ternary ring, this process
does not constitutes at all a projective isomorphism between the Okubic
plane $\mathcal{O}P^{2}$ and the octonionic projective plane $\mathbb{O}P^{2}$
. Indeed, consider any map $\phi:\mathcal{O}\longrightarrow\mathbb{O}$
is such that
\begin{equation}
\begin{array}{c}
0_{\mathcal{O}}\longrightarrow0_{\mathbb{O}},\\
e_{\mathcal{O}}\longrightarrow1_{\mathbb{O}},\\
x_{\mathcal{O}}\longrightarrow\phi\left(x\right).
\end{array}\label{eq:map}
\end{equation}
Then consider on the octonionic plane $\mathbb{O}P^{2}$ the three
points $\left(0_{\mathbb{O}},0_{\mathbb{O}}\right),\left(1_{\mathbb{O}},1_{\mathbb{O}}\right),\left(\phi\left(x\right),\phi\left(x\right)\right)$.
These three points are collinear, belonging to the same line $\left[1,0\right]=\left\{ x\in\mathbb{O}:\left(x,1\cdot x\right)\right\} $,
i.e. the line passing through the origin with slope $s=1$. Nevertheless,
in the Okubic plane $\mathcal{O}P^{2}$ the three points $\left(0_{\mathcal{O}},0_{\mathcal{O}}\right),\left(e_{\mathcal{O}},e_{\mathcal{O}}\right),\left(x,x\right)$
are not collinear. Indeed the only line joining $\left(0_{\mathcal{O}},0_{\mathcal{O}}\right),\left(e_{\mathcal{O}},e_{\mathcal{O}}\right)$
is $\left[e_{\mathcal{O}},0\right],$i.e. $\left(0_{\mathcal{O}},0_{\mathcal{O}}\right),\left(e_{\mathcal{O}},e_{\mathcal{O}}\right)\in\left[e_{\mathcal{O}},0\right]=\left\{ x\in\mathcal{O}:\left(x,e*x\right)\right\} $,
while the point $\left(x,x\right)\notin\left[e_{\mathcal{O}},0\right]$
since $x\neq e*x$ for any $x\neq e$. We thus have that any relabeling
process of the Okubic algebra with the octonionic algebra does not
yield to a collineation and, in fact, changes the incidence rules
of the plane. 
\end{rem}

\subsection{Direct verification of the ``Little Desargues Theorem'' }

It is well-known that the \textquotedbl\emph{Little Desargues Theorem}\textquotedbl{}
is valid in every Moufang plane. This theorem is a weaker version
of the \emph{Desargues Theorem}, which holds true for every Moufang
plane over an associative algebra.
\begin{figure}
\begin{centering}
\includegraphics[scale=0.85]{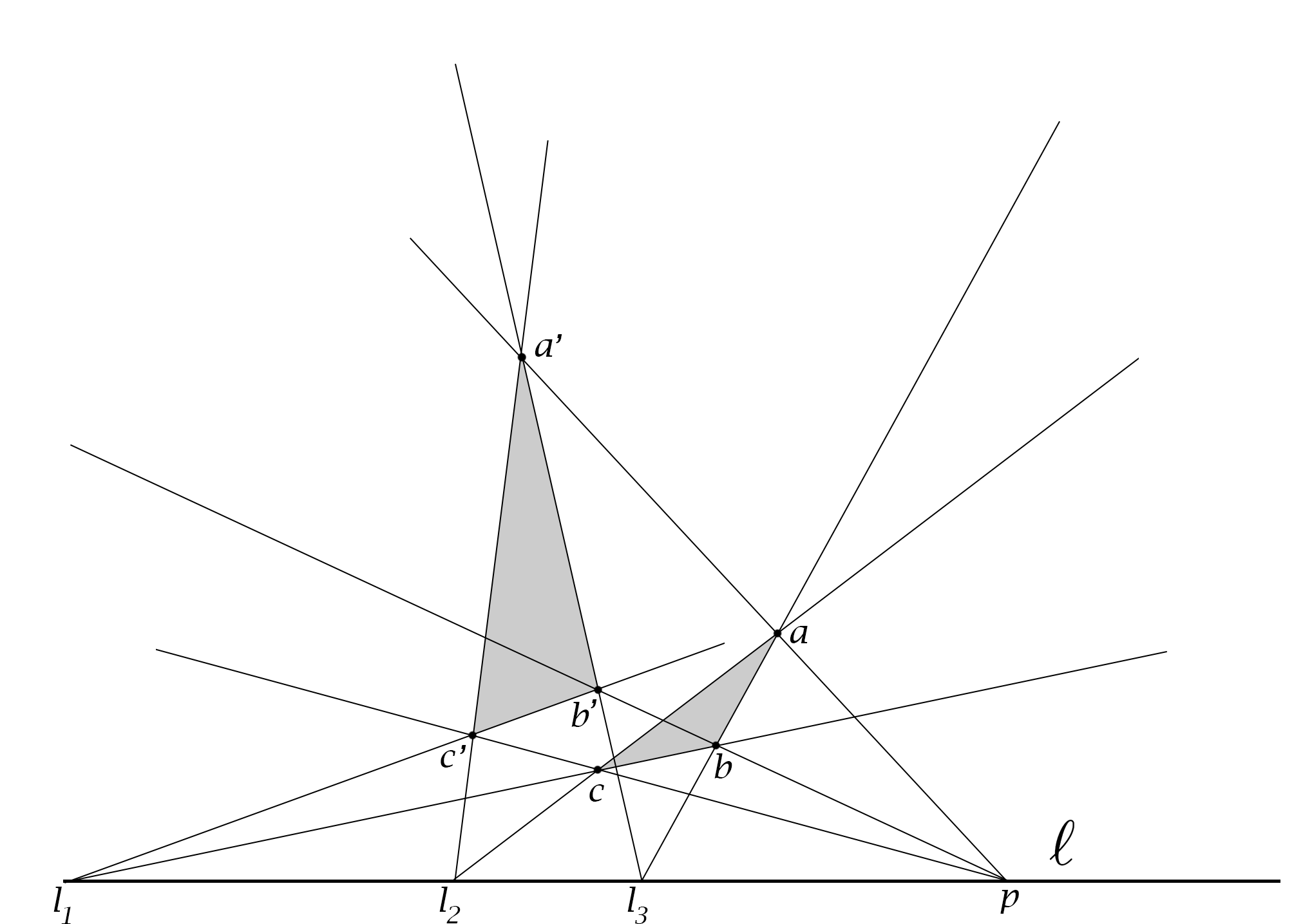}
\par\end{centering}
\caption{\label{fig:Little-Desargues-configuration}Little Desargues configuration:
two triangles $a,b,c$ and $a',b',c'$ are perspective, i.e. lines
$\overline{aa'}$, $\overline{bb'}$ and $\overline{cc'}$ interesect
on the same point $p$ that is the origin of the perspectivity, then
the points of intersection of corresponding sides all lie on one line
$\ell$ that is the axis of the perspectivity. In the Little Desargues
configuration the perspectivity that relates the two triangles is
also an elation, thus the center of the perspectivity $p$ is incident
to the axis $\ell$.}
\end{figure}

The Deasargues theorem states that if two triangles $a,b,c$ and $a',b',c'$,
are perspective, i.e. lines $\overline{aa'}$, $\overline{bb'}$ and
$\overline{cc'}$ interesect on the same point $p$ that is the origin
of the perspectivity, then the points of intersection of corresponding
sides all lie on one line $\ell$, termed the axis of the perspectivity.
However, this theorem is not valid in non-associative Moufang planes.
A special case arises when the point $p$ also lies on the axis $\ell$,
making the perspectivity an elation (see sec. \ref{sec:Elations});
this case is valid in all Moufang planes and is known as the \textquotedbl Little
Desargues Theorem\textquotedbl (see Figure \ref{fig:Little-Desargues-configuration}).

Rather than presenting a formal proof of the theorem's validity (which
is already established for any Moufang plane), we opted for a numerical
verification using a Wolfram Mathematica notebook now available at
the repository \texttt{https://github.com/DCorradetti/OkuboAlgebras}.
The notebook is fully documented with a notation coherent to that
used in this article, so that it can be easily used to verify all
calculations of this article involving octonions, para-octonions and
the real Okubic algebra. To validate the Little Desargues Theorem,
we first represented the real Okubic algebra three-by-three complex
Hermitian matrices endowed with the Okubic product in (\ref{eq:product Ok}).
Next, we developed the following Mathematica functions:
\begin{itemize}
\item \texttt{sLine{[}$x_{1}$,$y_{1}$,$x_{2}$,$y_{2}${]}}: Computes
the slope of the line connecting points $\left(x_{1},y_{1}\right)$
and $\left(x_{2},y_{2}\right)$;
\item \texttt{tLine{[}$x_{1}$,$y_{1}$,$x_{2}$,$y_{2}${]}}: Determines
the offset of the line connecting points $\left(x_{1},y_{1}\right)$
and $\left(x_{2},y_{2}\right)$;
\item \texttt{xPoint{[}$s_{1}$,$t_{1}$,$s_{2}$,$t_{2}${]}}:Calculates
the $x$-coordinate of the intersection point of lines $\left[s_{1},t_{1}\right]$
and $\left[s_{2},t_{2}\right]$;
\item \texttt{yPoint{[}$s_{1}$,$t_{1}$,$s_{2}$,$t_{2}${]}}:Calculates
the $y$-coordinate of the intersection point of lines $\left[s_{1},t_{1}\right]$
and $\left[s_{2},t_{2}\right]$;
\item \texttt{incidence{[}$x$,$y$,$s$,$t${]}}:Determines the $y$-coordinate
of the intersection point of lines $\left[s_{1},t_{1}\right]$ and
$\left[s_{2},t_{2}\right]$;
\end{itemize}
All function arguments are elements of the real Okubo algebra. Then,
to set up the configuration for the Little Desargues Theorem, we: 
\begin{enumerate}
\item Defined the center of the perspectivity $p$ and the axis $\ell$
so that $p\in\ell$.
\item Picked a point $a$ not belonging to $\ell$ and a point $a'$ incident
to the line $\overset{\rightharpoonup}{ap}$.
\item Picked a point $b$ not belonging to $\ell$ nor $\overset{\rightharpoonup}{ap}$
and defined the line $\overset{\rightharpoonup}{ab}$.
\item Found the intersection $l_{3}$of $\overset{\rightharpoonup}{ab}$
with $\ell$.
\item Found the point $b'$ given by the intersection of the lines $\overset{\rightharpoonup}{l_{3}a'}$
and $\overset{\rightharpoonup}{bp}$.
\item Picked a point $c$ not belonging to $\ell$ nor $\overset{\rightharpoonup}{ap}$
nor $\overset{\rightharpoonup}{bp}$ and thus defined the line $\overset{\rightharpoonup}{ac}$.
\item Found the intersection $l_{2}$of $\overset{\rightharpoonup}{ac}$
with $\ell$. 
\item Found the point $c'$ given by the intersection of lines $\overset{\rightharpoonup}{l_{2}a'}$
and $\overset{\rightharpoonup}{cp}$.
\end{enumerate}
Finally, to check the validity of the ``little Desargues theorem''
we verified that the point $l_{1}$, given by the intersection of
$\overset{\rightharpoonup}{cb}$ with $\overset{\rightharpoonup}{c'b'}$,
was incident to $\ell$. As shown in the Wolfram notebook, we verified
the validity of the ``little Desargues theorem'', and, choosing
$p\notin\ell$, that the full Desargues theorem is not valid. 

\section{Conclusions}

This work provides a novel construction of the 16-dimensional Cayley
plane using two flexible algebras: the para-octonions and the real
Okubo algebra. Despite lacking many algebraic properties of octonions,
including alternativity and identity element, both algebras nonetheless
gives the same projective plane up to isometries. Through two explicit
collineations, we established an equivalence between the Okubo, octonionic,
and para-octonionic planes, i.e., $\mathcal{O}P^{2}$, $\mathbb{O}P^{2}$
and $p\mathbb{O}P^{2}$. Moreover, numerical computations directly
confirmed foundational projective properties like the Little Desargues
Theorem.
\begin{table}
\centering{}%
\begin{tabular}{|c|c|c|c|}
\hline 
 & $\mathbb{O}$ & $p\mathbb{O}$ & $\mathcal{O}$\tabularnewline
\hline 
\hline 
Unital & Yes & No & No\tabularnewline
\hline 
Paraunital & No & Yes & No\tabularnewline
\hline 
Alternative & Yes & No & No\tabularnewline
\hline 
Flexible & Yes & Yes & Yes\tabularnewline
\hline 
Composition & Yes & Yes & Yes\tabularnewline
\hline 
Automorphism & $\text{G}_{2}$ & $\text{G}_{2}$ & $\text{SU}\left(3\right)$\tabularnewline
\hline 
\end{tabular}\caption{\label{tab:Summary-Ok-Oct-pOct}Summary of the algebraic properties
of the three division and composition algebras that allows a straightforward
and natural definition of the Cayley plane with the mathematical setup
described in this thesis.}
\end{table}

Among the three constructions of the 16-dimensional Moufang plane,
the one based on the Okubo algebra $\mathcal{O}$ is the simplest
possible for the definition of such a plane, requiring only an 8-dimensional
algebra that is neither alternative nor unital and that has an automorphism
group of real dimension $8$, compared to that of the octonions, and
para-octonios that has dimension $14$ (see Table \ref{tab:Summary-Ok-Oct-pOct}).
Surprisingly, even if historically octonions $\mathbb{O}$ were the
first algebra used for the definition of the compact 16-dimensional
Moufang plane, they are the less economic algebra that can be used
for the definition such plane. For the sake of completeness we should
say that, in order to have an affine plane correctly defined with
a natural setup as that defined above, the 8-dimensional algebra used
for its definition must be a division algebra. Moreover, to have a
completion of the affine plane in correspondence with a Veronese-type
of condition as those above, one need to have a composition algebra.
Thus, for the generalised Hurwitz theorem \cite{ElDuque Comp}, the
only three algebras for which this setup can exist are those recalled
in this paper.

As a corollary of the construction presented in this work, concrete
geometric realization of the real forms of the exceptional Lie groups
$\text{E}_{6\left(-26\right)}$, $\text{F}_{4\left(-52\right)}$ and
$\text{G}_{2\left(-14\right)}$ emerge without recurring to the uses
of octonions. This challenges the conventional thinking that links
exceptional Lie and Jordan structures to octonions and emphasizes
the role of symmetric composition algebras. Future work should further
explore, algebraic and physical interpretations of this new realization. 

From the algebraic point of view, it is known the existence of an
Okubo Jordan algebra \cite{Elduque Gradings SC}, that we expect to
be linked with 16-dimensional Moufang plane as the exceptional Jordan
algebra is \cite{Jordan}. Okubic construction of Tits-Freudenthal
Magic Square were already considered for symmetric composition algebras
\cite{EldMS1,EldMS2}, so we do expect to find a geometrical interpretation
of those construction as Freudenthal and Rosenfeld did for the Hurwitz
version\cite{Freud 1965,Rosenfeld-1993}. From the physical side,
given the connections between M-theory and the octonionic Cayley plane,
we expect that alternative constructions like the Okubo algebra that
has $\text{SU}\left(3\right)$ as automorphism group instead of $\text{G}_{2}$
may unravel novel phases of the theory. In particular, the investigation
of the physical consequences of the lack of unity of Okubo algebra
may turn out to be rather intriguing. We also have provided a valuable
reference implementation of the algebras with examples in a Wolfram
Mathematica notebook in order to help researchers in practical calculations.

All in all, by going beyond the deeply rooted connections between
octonions and exceptional mathematics, this study paves the way to
reimagining non-associative geometry. Symmetric composition algebras,
once considered pathological, may encapsulate geometric worlds equally
rich as their unital cousins. Much work remains to fully chart the
landscape.

\section{Acknowledgments}

We thank Alberto Elduque for useful suggestions and the anonymous
referee of Communications in Algebra for pointing out the isomorphism
of the Okubo projective plane with the octonionic plane. The work
of AM is supported by a \textquotedblleft Maria Zambrano\textquotedblright{}
distinguished researcher fellowship, financed by the European Union
within the NextGenerationEU program.

$*$\noun{ Departamento de Matemática, }\\
\noun{Universidade do Algarve, }\\
\noun{Campus de Gambelas, }\\
\noun{8005-139 Faro, Portugal} 
\begin{verbatim}
a55499@ualg.pt

\end{verbatim}
$\dagger$\noun{ Instituto de Física Teorica, Dep.to de Física,}\\
\noun{Universidad de Murcia, }\\
\noun{Campus de Espinardo, }\\
\noun{E-30100, Spain}
\begin{verbatim}
alessio.marrani@um.es 

\end{verbatim}
$\ddagger$\noun{ Dipartimento di Scienze Matematiche, Informatiche
e Fisiche, }\\
\noun{Università di Udine, }\\
\noun{Udine, 33100, Italy} 
\begin{verbatim}
francesco.zucconi@uniud.it
\end{verbatim}


\begin{thebibliography}{SBGHLS}
\bibitem[ABDHN]{Magic-Pyramids} Anastasiou A. , Borsten L., Duff
M.J., Hughes L.J., Nagy S., \textit{A magic pyramid of supergravities},
JHEP \textbf{04} (2014) 178.

\bibitem[ABHN]{any-D} A. Anastasiou, L. Borsten, M.J. Hughes, S.
Nagy, \textit{Global symmetries of Yang-Mills squared in various dimensions},
JHEP \textbf{01} (2016) 148.

\bibitem[Ba]{Baez} Baez, J. C., \emph{The octonions}. Bull. Amer.
Math. Soc. \textbf{39} (2002) 145-205.

\bibitem[BD]{Bentz-Dray} L. Bentz, T. Dray, \textit{Subalgebras of
the Split Octonions}, Adv. Appl. Clifford Algebras \textbf{28} (2018)
2, 40.

\bibitem[BoMa]{BoMa}Borsten L. , Marrani A., \emph{A Kind of Magic},
Class. Quant. Grav. 34 (2017) 23, 235014. 

\bibitem[BDFM]{BDFM}Borsten L., Duff M.J., Ferrara S., Marrani A.,
Rubens W., \textit{Small Orbits}, Phys. Rev. \textbf{D85} (2012) 086002

\bibitem[CCM]{CCM}Cacciatori S. L. , Cerchiai B. L. , Marrani A.
, \emph{Squaring the Magic}, Adv. Theor. Math. Phys. 19 (2015) 923-954. 

\bibitem[Car14]{Car14}Cartan E., \emph{Les groupes réels simples
finis et continus}, Ann. Éc. Norm. 31 (1914), 263--355.

\bibitem[Car15]{Car15}Cartan E., La theorie des groupes continus
et la geometrie, extended translation of a paper by G.Fano for Encycl.
Sci. Math., Oeuvres completes III, (1915), 1727-1861.

\bibitem[ChSch]{ChSch}Chevalley C. , Schafer R.D., \emph{The exceptional
simple Lie algebras F4 and E6}, Proc. Nat. Acad. Sci. US 36 (1950)
137--141.

\bibitem[CKF]{CKF}Ciftci S., Kaya R., Ferrar J. C., \emph{On 4-transitivity
in the Moufang plane}, J. Geom. 31 (1988) 65-68.

\bibitem[CS03]{CS03}J. H. Conway and D. A. Smith, On quaternions
and octonions: their geometry, arithmetic and symmetry, Natick, MA:
A. K. Peters (2003).

\bibitem[CMCAb]{corr Notes Octo}Corradetti D., Marrani A., Chester
D., Aschheim R, \emph{Octonionic Planes and Real Forms of $G_{2}$,
$F_{4}$ and $E_{6}$}, Geom. Integr. Quantization \textbf{23} (2022)
1-19.

\bibitem[CMCAa]{Corr RealF}Corradetti D., Marrani A., Chester D.,
Aschheim R. \emph{Conjugation Matters. Bioctonionic Veronese Vectors
and Cayley-Rosenfeld Planes}, Int. J. Geom. Methods Mod. Phys, \textbf{19.09}
(2022) 2250142.

\bibitem[CCMAc]{CCMA-Magic}Corradetti D., Marrani A., Chester D.,
Aschheim R.\emph{, A Magic Approach to octonionic Rosenfeld Planes},
arXiv:2212.06426 (2022) .

\bibitem[CZ]{Corr-OkuboSpin}Corradetti D., Zucconi F., \emph{A Geometrical
Interpretation of Okubo Spin Group}, J. Geom. Phys (2022) 104641.

\bibitem[CMZ]{CMZ}Corradetti D., Marrani A., Zucconi F., \emph{A
Deformation of the Okubic Albert Algebra and its Relation to the Okubic
Affine and Projective Planes}, arXiv:2208.03967.

\bibitem[Dy57]{Dynkin} Dynkin E.B., \textit{Maximal subgroups of
classical groups}, Trudy Moskov. Mat. Obshch. \textbf{1}, 39--166
(1952). English translation in: Amer. Math. Soc. Transl. (\textbf{2})
vol. 6 (1957), 245--378.

\bibitem[El15]{ElduQueAut}Elduque A., \emph{Okubo Algebras: Automorphisms,
Derivations and Idempotent}, Contemporary Mathematics, vol. 652, Amer.
Math. Soc. Providence, RI, 2015, pp. 61-73.

\bibitem[El18]{ElDuque Comp}Elduque A., \emph{Composition algebras};
in \emph{Algebra and Applications I: Non-associative Algebras and
Categories}, Chapter 2, pp. 27-57, edited by Abdenacer Makhlouf, Sciences-Mathematics,
ISTE-Wiley, London 2021.

\bibitem[El02]{EldMS2}Elduque A., \emph{The magic square and symmetric
compositions}; Revista Mat. Iberoamericana \textbf{20.2} (2004) 475-491.

\bibitem[El04]{EldMS1}Elduque A., \emph{A new look at Freudenthal's
magic square}, in Non Associative Algebra and Its Applications, (L.
Sabinin, L. Sbitneva and I.P. Shestakov, eds.); Lecture Notes in Pure
and Applied Mathematics, vol. 246, pp. 149-165. Chapman and Hall,
Boca Raton 2006.

\bibitem[EM90]{Elduque Myung 90}A. Elduque, and H.C. Myung, \emph{On
Okubo algebras}, in From symmetries to strings: forty years of Rochester
Conferences, World Sci. Publ., River Edge, NJ 1990, 299- 310.

\bibitem[EM91]{Elduque 91} Elduque A. and H.C. Myung, \emph{Flexible
composition algebras and Okubo algebras}, Comm. Algebra 19 (1991),
no. 4, 1197--1227.

\bibitem[EM93 ]{Elduque 93} Elduque A. and H.C. Myung, \emph{On flexible
composition algebras}, Comm. Algebra 21 (1993) 7, 2481--2505.

\bibitem[EP96]{EP96}Elduque, A., \& Maria Pérez, J. \emph{Composition
algebras with associative bilinear form}. Communications in Algebra,
\textbf{24}(3) (1996). 1091--1116.

\bibitem[El08]{Elduque Gradings SC}Elduque A., \emph{Gradings on
symmetric composition algebras}, Journal of Algebra 322, 10 (2009)
3542-3579.

\bibitem[Fau14]{Fau14}Faulkner J. R., \emph{The Role of Nonassociative
Algebra in Projective Geometry}, Graduate Studies in Mathematics,
American Mathematical Society, Volume 159, 2014.

\bibitem[Fr54]{Fr54} Freudenthal H., \emph{Beziehungen der E7 and
E8 zur Oktavenebene. I--IV}. Indag. Math. \textbf{16}(1954) pp. 218--230,
\textbf{16}(1954) pp. 363--368, \textbf{17}(1955) pp. 151--157,
\textbf{17}(1955) pp. 277--285.

\bibitem[Fr65]{Freud 1965}Freudenthal H., \emph{Lie groups in the
foundations of geometry}, Advances in Mathematics, volume 1, (1965)
pp. 145 - 190.

\bibitem[GZ03]{G=0000FCnaydin 2003}Günaydin M. and Zagermann M.,
\emph{Unified Maxwell-Einstein and Yang-Mills-Einstein supergravity
theories in five dimensions} JHEP \textbf{07} (2003) 023.

\bibitem[GMZ05]{Gunyadin Zagermann 2005}Günaydin M., McReynolds S.
and Zagermann M., \emph{Unified Maxwell-Einstein and Yang-Mills-Einstein
supergravity theories in four dimensions}, JHEP \textbf{09} (2005)
026.

\bibitem[GKN]{GKN}Günaydin M., K. Koepsell and H. Nicolai, \emph{Conformal
and Quasiconformal Realizations of Exceptional Lie Groups}, Commun.
Math. Phys. \textbf{221}, \textbf{57} (2001), hep-th/0008063

\bibitem[Gu76]{Gursey Breaking}Gürsey F., \emph{Spontaneous Breaking
of Exceptional Groups}, 5th International Colloquium on Group Theoretical
Methods in Physics (1976) 213-230.

\bibitem[HP]{HP}Hughes D. R. and Piper F. C., \emph{Projective planes},
Springer-Verlag, Berlin-Heidelberg-New York, 1973.

\bibitem[Hur]{Hurwitz98}Hurwitz A., \emph{Uber die Komposition der
quadratischen Formen von beliebig vielen Variablen}, Nachr. Ges. Wiss.
Gottingen, 1898.

\bibitem[Jac39]{Jac39}Jacobson N., \emph{Cayley numbers and simple
Lie algebras of type G}, Duke Math. J. \textbf{5 }(1939), 775--783.

\bibitem[Jac60]{Jac60}Jacobson N., \emph{Some groups of transformations
defined by Jordan algebras. II. Groups of type F4.}, Journal für die
reine und angewandte Mathematik, vol. 1960, no. 204, 1960, pp. 74-98.

\bibitem[JNW]{Jordan}Jordan P., von Neumann J. and Wigner E., \emph{On
an Algebraic Generalization of the Quantum Mechanical Formalism},
Ann. Math. \textbf{35} (1934) 29-64.

\bibitem[KMRT]{KMRT}Knus M.-A., Merkurjev A., Rost M. and Tignol
J.-P., \emph{The book of involutions.}American Mathematical Society
Colloquium Publications 44, American Mathematical Society, Providence,
RI, 1998.

\bibitem[LM01]{LM01}Landsberg J., Manivel M., \emph{The projective
geometry of Freudenthal\textquoteright s magic square}. J. Algebra,
\textbf{239} (2001), no. 2, pp. 477--512.

\bibitem[MD]{Manogue}Manogue, C.A; Dray, T. 2015. \emph{The Geometry
of Octonions}. World Scientific.

\bibitem[MR70]{Michel Radicati 1970}Michel L. and Radicati L., \emph{Properties
of the Breaking of Hadronic Internal Symmetry}, Ann. of Phys \textbf{66}
(1971) 758-783. 

\bibitem[Mou35]{Mou35}Moufang, R. (1935), \emph{Zur Struktur von
Alternativkörpern}, Math. Ann., \textbf{110}: 416--430

\bibitem[Ok78]{Okubo 1978}Okubo S., \emph{Deformation of Pseudo-quaternion
and Pseudo-octonion Algebras}, Hadronic J. \textbf{1} (1978) 1383.

\bibitem[Ok78b]{Okubo1978b}Okubo S., \emph{Pseudoquaternion and Pseudooctonion
Algebras} Hadronic J. \textbf{1} (1978) 1250.

\bibitem[Ok78c]{Okubo 78c}Okubo S., \emph{octonions as traceless
3 x 3 matrices via a flexible Lie-admissible algebra}, Hadronic J.
\textbf{1 }(1978), 1432-1465.

\bibitem[OM80]{OkMy80}Okubo S., Myung H.C., \emph{Some new classes
of division algebras}, J. Algebra \textbf{67} (1980), 479--490.

\bibitem[OO81a]{OO81a} Okubo S., Osborn M., \emph{Algebras with nondegenerate
associative symmetric bilinear forms permitting composition}, Communications
in Algebra, \textbf{9:12}, (1981) 1233-1261

\bibitem[OO81b]{OO81b}Okubo S., Osborn M., Algebras with nondegenerate
associative symmetric bilinear forms permitting composition, II, Communications
in Algebra, \textbf{9:20 }(1981) 2015-2073, DOI:

\bibitem[Ok95]{Okubo95}Okubo S., \emph{Introduction to octonion and
other non-associative algebras in physics}, Montroll Memorial Lecture
Series in Mathematical Physics 2, Cambridge University Press, Cambridge,
1995.

\bibitem[Pi75]{Pi75}Pickert G., \emph{Projektive Ebenen}. Springer,
Berlin-Heidelberg-New York, 2nd ed. 1975.

\bibitem[Pet69]{Petersson 1969}Petersson H.P. , \emph{Eine Identitat
funften Grades, der gewisse Isotope von Kompositions- Algebren genugen},
Math. Z. \textbf{109} (1969), 217--238.

\bibitem[Ro93]{Rosenfeld-1993} Rosenfeld B. A., \textit{Spaces with
Exceptional Fundamental Groups}, Publications de l'Institut Mathématique,
nouvelle série tome \textbf{54} (68), 97-119 (1993).

\bibitem[Ro97]{Rosenfeld Group2-1}Rosenfeld B. A., \emph{Geometry
of Lie Groups}, Kluwer 1997.

\bibitem[Ro98]{Rosenf98}Rosenfeld B.A., \emph{Geometry of Planes
over Nonassociative Algebras}, Acta Appl. Math. \textbf{50} (1998)
103-110.

\bibitem[SBGHLS]{Compact Projective}Salzmann H., Betten D., Grundhofer
T., Howen R. and Stroppel M., \emph{Compact projective Planes: With
an Introduction to octonion Geometry}, Berlin, New York: De Gruyter,
2011.

\bibitem[Sa08]{Sa08}Salzmann H., \emph{16-dimensional compact projective
planes with a collineation group of dimension \ensuremath{\ge} 35},
Arch. Math. 90 (2008), 284--288; R 08m: 51040

\bibitem[Sa17]{Sa17}Salzmann H., \emph{Compact 16-dimensional planes:
an update, }arXiv (2017) 1706.0369.

\bibitem[Sat]{Sat}Satake I.\emph{,On Representations and Compactifications
of Symmetric Riemannian Spaces}. The Annals of Mathematics, \textbf{71}(1)
(1960) 77.

\bibitem[Sa11]{Sa11}Sati H., \emph{On the geometry of the supermultiplet
in M-theory}. International Journal of Geometric Methods in Modern
Physics \textbf{8} (07) (2011), 1519-1551

\bibitem[Scha]{Scha}Schafer R. D. ,\emph{ Introduction to Non-Associative
Algebras}, Dover, New York, 1995.

\bibitem[Sp60]{Sp60}Springer T.A., \emph{The projective octave plane.
I, II}, Indag. Math. (Proceedings), Volume 63, 1960, Pages 74-88,

\bibitem[SpVe63]{SpVe63}Springer T., Veldkamp F., \emph{Elliptic
and hyperbolic octave planes. I, II }and\emph{ III} in Indag. Math.
(Proceedings) (1963) pp. 413-438 

\bibitem[SpVa68]{SpVa68}Springer T., Veldkamp F., \emph{On Hjelmslev-Moufang
planes}. Math Z \textbf{107}, 249--263 (1968).

\bibitem[SpVa]{Veldkamp} Springer T., Veldkamp F., \emph{Octonions,
Jordan Algebras and Exceptional Groups}, Springer-Verlag Berlin Heidelberg
2000 

\bibitem[Sp62]{Springer}Springer T.A., \emph{Characterization of
a class of cubic forms}, Indag. Math. \textbf{24} 1962, pp. 259--265.

\bibitem[Ste80]{Ste80}Stech B., \emph{Exceptional Groups for Grand
Unification}, Springer US, Boston, MA, 1980. 

\bibitem[Ste08]{Ste08}Stech B., \& Tavartkiladze, Z. Generation symmetry
and E6 unification. Phys. Rev. \textbf{D 77} (2008) 076009. 

\bibitem[Ste72]{Ste72}Stevenson F. W., \emph{Projective Planes},
W.H. Freeman \& Co, 1972

\bibitem[Ti]{Tits}Tits J., \emph{Algèbres alternatives, algèbres
de Jordan et algèbres de Lie exceptionnelles}, Indag. Math.\textbf{
28} (1962) 530--535. 

\bibitem[Vin66]{Vinberg} Vinberg E. B., \emph{A Construction of Exceptional
Lie Groups} (Russian), Tr. Semin. Vek- torn. Tensorn. Anal., \textbf{13}
(1966) 7--9.

\bibitem[Ya15]{Yamatsu} Yamatsu N., \textit{Finite-dimensional Lie
algebra and their representations for unified model building}, arXiv
1522.08771. 

\bibitem[Yo]{Yokota}Yokota I., \emph{Exceptional Lie Groups}, arXiv:0902.0431
(2009).

\bibitem[Yo68]{Yo68}Yokota I., \emph{Exceptional Lie Group F4 and
its Representation Rings}, Jour. Fac. Sci.,Vol. 3, No. 1, pp.35-60
(1968)

\bibitem[Yo77]{Yo77}Yokota I., \emph{Non-compact Simple Lie Group
F4,2 of Type F4}, Jour. Fac. Sci.,Vol. 12, N. 1, (1977)

\bibitem[ZSSS]{ZSSS}Zhevlakov K.A., A. M. Slin'ko, I. P.. Shestakov
and A. I. Shirshov, \emph{Rings that are nearly associative}, Academic
Press, New York 1982.

\end{thebibliography}
\end{document}